\newtheoremstyle{newremark}
  {5pt}
  {5pt}
  {\rmfamily}
  {}
  {\rmfamily\bf}
  {.}
  {.5em}
  {}
\newtheorem{theorem}{Theorem}
\newtheorem{lemma}[theorem]{Lemma}
\newtheorem{corollary}[theorem]{Corollary}
\newtheorem{proposition}[theorem]{Proposition}
\theoremstyle{newremark}
\newtheorem{remark}[theorem]{Remark}
\newtheorem{definition}[theorem]{Definition}
\newtheorem*{definition*}{Definition} 
\newtheorem*{notations*}{Notations}
\numberwithin{theorem}{section}
\numberwithin{equation}{section}
\newcommand{\R}{\mathbb{R}} 
\def\XXint#1#2#3{{%
\setbox0=\hbox{$#1{#2#3}{\int}$}
\vcenter{\hbox{$#2#3$}}\kern-.5\wd0}}
\renewcommand{\leq}{\leqslant}
\renewcommand{\geq}{\geqslant}
\renewcommand{\subset}{\subseteq}
\newcommand{\eps}{\varepsilon}
\begin{document}


\title[\bf Minimizing fractional harmonic maps on the real line]{Minimizing fractional harmonic maps\\ 
on the real line in the supercritical regime}

\author{Vincent Millot}
\address{Universit\'e Paris Diderot, Lab. J.L.Lions (CNRS UMR 7598), Paris, France}
\email{millot@ljll.univ-paris-diderot.fr}

\author{Yannick Sire}
\address{Johns Hopkins University, Department of Mathematics, Baltimore, USA}
\email{sire@math.jhu.edu}

\author{Hui Yu}
\address{Columbia University, Department of Mathematics, New York, USA}
\email{huiyu@math.columbia.edu}



\begin{abstract}
This article addresses the regularity issue for minimizing fractional harmonic maps of order $s\in(0,1/2)$ from an interval into a smooth manifold. 
H\"older continuity away from a locally finite set is established for a general target. If the target is the standard sphere, then H\"older continuity holds everywhere.  
\end{abstract}



\maketitle




\section{Introduction}

In a series of recent articles  \cite{DLR1,DLR2}, F. Da Lio and T. Rivi\`ere introduced the concept of {\sl $1/2$-harmonic maps} into a manifold. Given a compact smooth submanifold  $\mathcal{N}\subset \mathbb{R}^d$ without boundary, such a  map $u: \mathbb R \to \mathcal N$ is defined as a critical point of the nonlocal energy 
$$
\mathcal E_{\frac{1}{2}}(u):=\frac{1}{2\pi}\iint_{\mathbb R\times\mathbb{R}}\frac{|u(x)-u(y)|^2}{|x-y|^2}\,dxdy\,.
$$
It satisfies the Euler-Lagrange equation
$$(-\Delta)^{\frac{1}{2}}u\perp {\rm Tan}(u,\mathcal{N})\,,$$
where $(-\Delta)^{\frac{1}{2}}$ is the fractional Laplacian as defined in Fourier space. Obviously, this equation is in strong analogy with the standard harmonic map equation into $\mathcal{N}$, and one main issue is to prove {\sl a priori} regularity. This was achieved in  \cite{DLR1,DLR2}, thus extending the famous regularity result of F. H\'elein for classical harmonic maps on surfaces \cite{Hel}. 
The notion of $1/2$-harmonic maps has  been then extended in \cite{MS,Mos} to higher dimensions, and partial regularity for minimizing or stationary $1/2$-harmonic maps established (in analogy with the classical harmonic map problem \cite{Bet,Ev,SU1}). 
\vskip3pt

All these works pave the way to a more general theory for {\sl fractional harmonic maps} where the energy $\mathcal{E}_{\frac{1}{2}}$ is replaced by the Dirichlet form induced by the fractional Laplacian $(-\Delta)^s$ with  exponent $s\in(0,1)$. As noticed in \cite[Remark 1.7]{MSW}, the case $s\in (0,1/2)$ is in strong relation with the so-called {\sl nonlocal minimal surfaces} introduced by L.~Caffarelli, J.M. Roquejoffre, and O. Savin \cite{CRS}. For this reason, we focus here on the case $s\in(0,1/2)$, and as first step toward such a theory, we shall consider minimizing $s$-harmonic maps in one space dimension.  Before going further, let us  give some details on the mathematical framework. 
\vskip3pt

Given $s\in(0,1/2)$ and a bounded open interval $\omega\subset \mathbb{R}$, the nonlocal (or fractional) $s$-energy in $\omega$ of a measurable function $u:\mathbb{R}\to\R^d$ is defined as 
$$\mathcal{E}_s(u,\omega):=\frac{\gamma_s}{2}\iint_{\omega\times\omega}\frac{|u(x)-u(y)|^2}{|x-y|^{1+2s}}\,dxdy +\gamma_s \iint_{\omega\times(\R\setminus\omega)}\frac{|u(x)-u(y)|^2}{|x-y|^{1+2s}}\,dxdy\,.$$
The normalization constant  $\gamma_s:=s2^{2s}\pi^{-\frac{1}{2}}\frac{\Gamma(\frac{1+2s}{2})}{\Gamma(1-s)}$ is chosen in such a way that  
$$\mathcal{E}_s(u,\omega)=\frac{1}{2}\int_{\R}|(-\Delta)^{\frac{s}{2}} u|^2\,dx \quad \forall u\in C^\infty_0(\omega;\R^d)\,.$$
Following \cite[Section 2]{MSW}, we denote by $\widehat H^s(\omega;\R^d)$ 
the Hilbert space made of $L^2_{\rm loc}(\R)$-functions $u$ such that $\mathcal{E}_s(u,\omega)<\infty$, and we set 
$$\widehat H^s(\omega;\mathcal{N}):=\big\{u\in\widehat H^s(\omega;\R^d): u(x)\in \mathcal{N}\text{ a.e. on }\R \big\}\,.$$ 
\begin{definition*}
We say that $u\in \widehat H^s(\omega;\mathcal{N})$ is a {\sl minimizing $s$-harmonic map} in $\omega$ if 
$$\mathcal{E}_s(u,\omega)\leq \mathcal{E}_s(\widetilde u,\omega) $$
for every $\widetilde u\in \widehat H^s(\omega;\mathcal{N})$ such that ${\rm spt}(\widetilde u-u)\subset \omega$. 
\end{definition*}

Exactly as in the case $s=1/2$ (see \cite[Remark 4.24]{MS}), a minimizing $s$-harmonic map satisfies the Euler-Lagrange equation 
$$ (-\Delta)^{s}u\perp {\rm Tan}(u,\mathcal{N})\quad \text{in $\mathscr{D}^\prime(\omega)$}\,.$$
In terms of scaling, this equation turns out to be supercritical (since $s\in(0,1/2)$), and one may expect that minimizing $s$-harmonic maps are singular, exactly as it happens for (classical) minimizing harmonic maps in dimensions greater than three~\cite{SU1}. 
\vskip3pt

The main objective of this paper is  to provide a first partial regularity result for minimizing $s$-harmonic maps. 
At this stage, we should point out that existence is not an issue. Indeed, prescribing an exterior condition $g\in \widehat H^s(\omega;\mathcal{N})$, one can minimize the energy $\mathcal{E}_s(\cdot,\omega)$ over all maps $u\in \widehat H^s(\omega;\mathcal{N})$ satisfying $u=g$ a.e. in $\R\setminus\omega$. Existence for this minimization problem easily follows from the Direct Method of Calculus of Variations, and it obviously produces a minimizing $s$-harmonic map in $\omega$. 
\vskip3pt

Our first main result concerns the case of a general (smooth) target $\mathcal{N}$.

\begin{theorem}\label{main1}
For $s\in (0,1/2)$, let $u\in \widehat H^s(\omega;\mathcal{N})$ be a minimizing $s$-harmonic map in $\omega$. Then $u$ is locally H\"older continuous in $\omega$ away from a locally finite set of points. 
\end{theorem}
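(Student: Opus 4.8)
\emph{Proof proposal.} The plan is to run the classical scheme for partial regularity of energy minimizers — monotonicity formula, $\varepsilon$-regularity, compactness and blow-up analysis of tangent maps — on the Caffarelli--Silvestre extension, adapting everything to the degenerate weight $t^{1-2s}$ and to the fact that the manifold constraint is imposed only on the boundary $\{t=0\}$. Concretely, I would first pass to the weighted-harmonic extension $\tilde u$ of $u$ to the half-plane $\mathbb{R}^2_+=\{(x,t):t>0\}$, i.e. the solution of $\mathrm{div}(t^{1-2s}\nabla\tilde u)=0$ with trace $u$ on $\{t=0\}$. As recalled in the preliminaries (cf. \cite{MSW} and, for $s=1/2$, \cite{MS}), $u$ is a minimizing $s$-harmonic map in $\omega$ if and only if $\tilde u$ locally minimizes $\int t^{1-2s}|\nabla\tilde u|^2$ among competitors that coincide with $\tilde u$ off a compact subset of $\omega\times\{0\}$ and whose trace on $\{t=0\}$ is $\mathcal N$-valued; moreover $\tilde u$ solves the degenerate system with conormal condition $-\lim_{t\to0}t^{1-2s}\partial_t\tilde u\perp\mathrm{Tan}(u,\mathcal N)$. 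Since there is no constraint in the open half-plane, standard theory for $A_2$-weighted degenerate elliptic systems makes $\tilde u$ smooth in $\mathbb{R}^2_+$, so that singularities can only occur on $\{t=0\}$, and H\"older continuity of $u$ near $x_0$ is equivalent to H\"older continuity of $\tilde u$ up to $\{t=0\}$ near $(x_0,0)$.

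Next I would establish the boundary monotonicity formula. Testing minimality against inner variations generated by vector fields tangent to $\{t=0\}$ — admissible because such flows preserve the constraint and the competitor class — yields a Pohozaev identity showing that, for $z_0=(x_0,0)$ with $x_0\in\omega$,
\[
\Theta(z_0,r):=r^{2s-1}\int_{B^+_r(z_0)}t^{1-2s}|\nabla\tilde u|^2\,dx\,dt
\]
(the exponent being forced by the scaling $\tilde u\mapsto\tilde u(z_0+r\,\cdot)$) is nondecreasing in $r$, with the defect term controlling $\int t^{1-2s}|\partial_\rho\tilde u|^2$ so that $r\mapsto\Theta(z_0,r)$ is constant only if $\tilde u$ is homogeneous of degree zero about $z_0$. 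In particular the density $\Theta(z_0):=\lim_{r\to0^+}\Theta(z_0,r)$ exists, $z_0\mapsto\Theta(z_0,r)$ is continuous, and $z_0\mapsto\Theta(z_0)$ is upper semicontinuous.

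The analytic core consists of two intertwined statements. First, a compactness result: if $\tilde u_k$ are minimizers with locally bounded weighted energy, then along a subsequence $\tilde u_k\to\tilde u_\infty$ strongly in the weighted $H^1_{\mathrm{loc}}$ sense, $\tilde u_\infty$ is again a minimizer, and its trace stays in $\mathcal N$. This requires a fractional Luckhaus-type interpolation lemma — gluing, in a thin boundary collar and at controlled energy cost, two $\mathcal N$-valued traces that are $L^2$-close — combined with the minimality of both $\tilde u_k$ and $\tilde u_\infty$ to upgrade weak to strong convergence. Second, an $\varepsilon$-regularity estimate: there is $\varepsilon_0>0$ such that $\Theta(z_0,r)<\varepsilon_0$ implies an energy decay $\Theta(z_0,\theta r)\le\tfrac12\Theta(z_0,r)$ for a fixed $\theta\in(0,1)$. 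I would prove this by comparison: replace $\tilde u$ in $B^+_{r/2}(z_0)$ by its weighted-harmonic extension of its own trace, project the result onto $\mathcal N$ (licit since the trace then has small oscillation), and correct it near $\partial B^+_{r/2}$ via the Luckhaus lemma; estimating the energy of this competitor and using minimality of $\tilde u$ gives the decay. Iterating yields $\Theta(z_0,\rho)\le C(\rho/r)^{2\alpha}\Theta(z_0,r)$ on some ball, whence $\tilde u\in C^{0,\alpha}$ up to $\{t=0\}$ near $z_0$ by the Campanato criterion.

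Finally I would conclude by a dimension-reduction argument. Let $\Sigma:=\{x_0\in\omega: u \text{ is not locally H\"older continuous near } x_0\}$; by $\varepsilon$-regularity $x_0\in\Sigma\Rightarrow\Theta((x_0,0))\ge\varepsilon_0$, so by upper semicontinuity $\Sigma$ is relatively closed in $\omega$. Suppose it were not locally finite: then $x_k\to x_*\in\omega$ with $x_k\in\Sigma$ pairwise distinct. Blow up by $r_k:=|x_k-x_*|\to0$; by the compactness step and the equality case of the monotonicity formula, $\tilde u((x_*,0)+r_k\,\cdot)\to\tilde u_*$ strongly, with $\tilde u_*$ a global minimizer homogeneous of degree zero about the origin — so a further blow-up of $\tilde u_*$ at a boundary point $\ne 0$ is invariant along $\{t=0\}$, hence (using the conormal condition) constant, and $\tilde u_*$ is regular away from the origin. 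But $\xi_k:=((x_k-x_*)/r_k,0)$ has $|\xi_k|=1$ and lies in the singular set of the rescalings; passing to a subsequence $\xi_k\to\xi_*$ with $|\xi_*|=1$, upper semicontinuity of the density under the strong convergence together with $\varepsilon$-regularity forces $\xi_*$ into the singular set of $\tilde u_*$, contradicting $|\xi_*|\ne0$. Hence $\Sigma$ has no accumulation point in $\omega$. The main obstacle is the content of the third paragraph: in the supercritical range $s<1/2$ the energy controls no modulus of continuity and the extension equation is degenerate, so both the strong compactness of minimizers and the comparison construction in the $\varepsilon$-regularity hinge on a genuinely fractional, weighted Luckhaus lemma whose energy bookkeeping must be sharp enough to beat the scaling exponent $2s-1<0$; by comparison the monotonicity formula and the final bookkeeping are routine.
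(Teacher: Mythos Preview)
Your overall architecture is correct and matches the paper's: extension, boundary monotonicity, $\varepsilon$-regularity, compactness, blow-up to a $0$-homogeneous tangent map, and a contradiction at an accumulation point. There is no genuine gap. However, your proposal differs from the paper in two technical choices, and in both cases the paper's route is noticeably simpler.

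\textbf{Compactness.} You single out a weighted fractional Luckhaus lemma as the ``main obstacle'' needed to upgrade weak to strong convergence of minimizers. The paper bypasses this entirely. Because $\mathcal N$ is compact, for any competitor $\widetilde u$ supported in $\omega'\Subset\omega$ one can form $\widetilde u_k$ by keeping $\widetilde u$ on $\omega'$ and $u_k$ outside; the only cross term is $\iint_{\omega'\times(\R\setminus\omega')}|x-y|^{-1-2s}|\widetilde u(x)-u_k(y)|^2$, whose integrand is dominated by $C|x-y|^{-1-2s}\in L^1(\omega'\times(\R\setminus\omega'))$. Dominated convergence then gives $\mathcal E_s(\widetilde u_k,\omega')\to\mathcal E_s(\widetilde u,\omega')$, and minimality of $u_k$ plus Fatou yields minimality of the limit together with $\mathcal E_s(u_k,\omega')\to\mathcal E_s(u,\omega')$; strong $H^1$ convergence of the extensions follows. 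So no gluing lemma is needed at all: the nonlocality and the boundedness of the target let one test minimality ``as if the exterior condition were fixed''.

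\textbf{$\varepsilon$-regularity.} Your direct comparison (harmonic replacement, project, Luckhaus-correct near the lateral boundary) is in the spirit of Schoen--Uhlenbeck. The paper instead follows Hardt--Lin \cite{HL,HL2}: it proves an explicit extension lemma for boundary data of small oscillation (even reflection to $\partial B_1$, weighted harmonic extension $h$ to $B_1$, then interpolate between $h$ and $\pi_{\mathcal N}(h)$ by a cutoff with $|\partial_x\zeta|\lesssim y^{-a/2}$), derives from it a \emph{hybrid inequality}
\[
\int_{B^+_{1/2}}y^a|\nabla v|^2\le \lambda\int_{B^+_1}y^a|\nabla v|^2+\frac{C}{\lambda}\Big(\int_{B^+_1}y^{-a}|v-\xi|^2+\int_{B^+_1}y^{-a}d_{\mathcal N}^2(v)\Big),
\]
and then proves energy improvement by a blow-up contradiction: normalize by $\varepsilon_n^{-1}$, pass to a weak limit $w$, split $w=w^T+w^\perp$ into tangential/normal parts at the limiting point $p\in\mathcal N$; each part solves a linear weighted equation (even/odd reflection) and is therefore H\"older by \cite{FKS}, which beats the hybrid inequality. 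This avoids the delicate bookkeeping you anticipate in the Luckhaus collar.

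\textbf{Final step.} Your Federer-style second blow-up at a nonzero boundary point is correct but unnecessary in one dimension: a $0$-homogeneous map on $\R$ is literally two constants $a,b\in\mathcal N$, so the tangent map is continuous at $x=1$, whence its density there vanishes, and the contradiction with upper semicontinuity of the density is immediate. Your approach would be the right one in higher dimensions; here it is overkill.
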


The proof of Theorem \ref{main1} follows somehow the general scheme for proving partial regularity of minimizing harmonic maps, or more precisely of minimizing harmonic maps 
with (partially) free boundary. Indeed, the problem can be rephrased as a degenerate regularity problem for harmonic maps 
with free boundary, once we use the so-called Caffarelli-Silvestre extension \cite{CS}.  With this respect, our arguments ressemble to the ones in \cite{DS,HL,HL2}, except that they have 
to be suitably modified to deal with our degenerate setting.  In view of this classical literature, one may wonder if H\"older continuity implies higher order regularity. We do not address  this question here, as it will be the object of a future work. In a complementary direction, one can ask wether or not a (one dimensional) minimizing $s$-harmonic can be singular. We believe that, in general, Theorem \ref{main1} is optimal, but the question remains open. However, if the manifold $\mathcal{N}$ is a standard sphere, then there are no singularities at all. This statement (and proof) is in a sense an amusing fractional counterpart of the regularity result of R. Schoen \& K. Uhlenbeck \cite{SU2} for minimizing harmonic maps into spheres.

\begin{theorem}\label{main2}
For $s\in (0,1/2)$ and $d>1$, let $u\in \widehat H^s(\omega;\mathbb{S}^{d-1})$ be a minimizing $s$-harmonic map in $\omega$. Then $u$ is locally H\"older continuous in $\omega$. 
\end{theorem}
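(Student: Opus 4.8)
The plan is to upgrade the partial regularity of Theorem \ref{main1} to full regularity when $\mathcal{N}=\mathbb{S}^{d-1}$ by ruling out the possibility of an isolated singular point. Since the singular set is locally finite by Theorem \ref{main1}, it suffices to show that no point of $\omega$ can be singular, i.e.\ that $u$ is continuous in a neighborhood of each $x_0\in\omega$. By translation and scaling we may take $x_0=0$ and work on an interval $(-1,1)\subset\omega$, and we argue by contradiction, assuming $0$ is an isolated singularity. As in the proof of Theorem \ref{main1}, we pass to the Caffarelli--Silvestre extension: let $U(x,t)$ be the $a$-harmonic extension of $u$ on the half-plane $\R\times(0,\infty)$ with weight $t^a$, $a=1-2s$, so that $U$ is a minimizing harmonic map with free boundary into $\mathbb{S}^{d-1}$ for the weighted Dirichlet energy $\int t^a|\nabla U|^2$, with the target constraint imposed on the boundary $\{t=0\}$.

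The first key step is a monotonicity formula and a blow-up analysis at $0$. The weighted energy $r\mapsto r^{-(1+a-1)}\,\cdots$ (the correctly normalized version, scale-invariant in dimension $1+1$ with weight) is monotone nondecreasing in $r$, so the rescalings $U_r(X):=U(rX)$ converge (along a subsequence, weakly in the weighted space and strongly away from $0$) to a tangent map $U_\star$ that is homogeneous of degree $0$ and is itself a minimizing $a$-harmonic map with free $\mathbb{S}^{d-1}$-boundary. Because we are in the half-plane, a degree-$0$ homogeneous map depends only on the polar angle $\theta\in[0,\pi]$, with the two boundary rays $\theta=0$ and $\theta=\pi$ corresponding to $t=0$; thus $u_\star:=U_\star|_{t=0}$ is constant on each of the two half-lines $(0,\infty)$ and $(-\infty,0)$, taking two (possibly distinct) values $p_+,p_-\in\mathbb{S}^{d-1}$. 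If $p_+=p_-$ one shows the tangent map is trivial and hence (by an $\varepsilon$-regularity statement, which is implicit in the proof of Theorem \ref{main1}) the point is regular, a contradiction. So the only potentially singular tangent maps are the ``jump'' maps interpolating between two antipodal-or-not values across the half-plane.

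The second, and central, step is an \emph{energy comparison argument specific to the sphere}, in the spirit of Schoen--Uhlenbeck: the candidate singular tangent map $U_\star$ (a homogeneous-degree-$0$ geodesic arc of $\mathbb{S}^{d-1}$ swept as $\theta$ runs over $[0,\pi]$, extended $a$-harmonically) is \emph{not} energy minimizing. Concretely, one computes the weighted energy of $U_\star$ on a half-ball $B_1^+$ and constructs an explicit competitor with strictly smaller energy and the same boundary data on $\partial B_1^+\cap\{t>0\}$ — for instance by ``cutting the corner'': replacing $U_\star$ near $0$ by a map that goes partway along the geodesic, exploiting the fact that on $\mathbb{S}^{d-1}$ with $d>1$ one can rotate the geodesic arc and the squared-length functional of such arcs, after the $a$-harmonic extension, is strictly convex enough that the degree-$0$ profile sits above a conical competitor. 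The condition $d>1$ enters precisely here, since for $d=1$ there is no room to deform. This strict inequality contradicts the minimality of $U_\star$ inherited from the blow-up, so no nontrivial tangent map exists, every point is regular, and local H\"older continuity on all of $\omega$ follows from the $\varepsilon$-regularity theorem.

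The step I expect to be the main obstacle is the explicit energy computation in the second step: one must identify the precise normalized monotone quantity in the degenerate weighted half-plane setting, write down the homogeneous-degree-$0$ extension $U_\star(\theta)$ of a boundary jump and compute its weighted energy in closed form (this involves the hypergeometric-type ODE for the $a$-harmonic extension of $|\theta|^0$-type data, i.e.\ the Poisson kernel for $(-\Delta)^s$ on the line restricted to half-lines), and then produce a competitor whose energy is provably strictly smaller — the strictness, uniform down to scale $0$, is the delicate point, and it is where the non-locality of the energy (the tail term $\gamma_s\iint_{\omega\times(\R\setminus\omega)}$, or equivalently the global nature of the extension) must be controlled so that a local surgery near $0$ genuinely lowers the total energy. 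Carrying this out rigorously, rather than just heuristically via the $s\to 1/2$ limit, will require the careful barrier and cut-off estimates already developed for Theorem \ref{main1}.
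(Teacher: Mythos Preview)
Your blow-up framework matches the paper's: reduce to showing the singular set $S$ from the proof of Theorem~\ref{main1} is empty, pick a tangent map $u_0$ at a putative singular point via Proposition~\ref{tangmap}, observe that $u_0$ is a two-valued step function with distinct values $a,b\in\mathbb{S}^{d-1}$ that is itself minimizing on every interval, and derive a contradiction from its non-minimality.

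Where you diverge is in the non-minimality argument. You propose to work with the extension $U_\star=u_0^{\rm e}$ on the half-plane and build, by hand, a competitor with strictly smaller weighted Dirichlet energy in a half-ball, anticipating a delicate computation of the $a$-harmonic profile of a step function. The paper instead stays entirely on the line and never touches the extension for this step. After reducing to $\mathcal{N}=\mathbb{S}^1$ (working in the plane through $a,b,0$), it writes down an explicit one-parameter family $u_t$ of $\mathbb{S}^1$-valued competitors obtained by simultaneously rotating the two values on $(-1,1)$ while keeping $u_0$ outside, and computes $\mathcal{E}_s\big(u_t,(-1,1)\big)$ in closed form: since $u_t$ is piecewise constant this reduces to five elementary double integrals of $|x-y|^{-1-2s}$ over rectangles and half-strips. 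The first-variation condition $\frac{d}{dt}\big|_{t=0}\mathcal{E}_s(u_t)=0$ forces $a$ and $b$ to be antipodal, and then a direct calculation gives $\frac{d^2}{dt^2}\big|_{t=0}\mathcal{E}_s(u_t)<0$, contradicting minimality. This second-variation trick is what the paper calls ``an amusing fractional counterpart'' of Schoen--Uhlenbeck.

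Your plan is not wrong in spirit, but the step you yourself flag as the main obstacle --- identifying the angular profile $U_\star(\theta)$ and beating its weighted energy --- is exactly what the paper sidesteps. (A minor inaccuracy: your description of $U_\star$ as ``a geodesic arc of $\mathbb{S}^{d-1}$ swept as $\theta$ runs over $[0,\pi]$'' is off; the $a$-harmonic extension of the step function takes values along the \emph{chord} between $a$ and $b$, not on the sphere, since only the boundary trace is constrained.) The paper's route is a page of explicit algebra with no ODE or barrier estimates; yours, while conceptually parallel, would be substantially harder to execute rigorously.
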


This article is organized as follows. In Section \ref{weightharmap}, we introduce the notion of harmonic maps with free boundary induced by the Caffarelli-Silvestre extension, together with some fundamental properties  such as the monotonicity formula. In Section \ref{epsregsec}, we prove an $\varepsilon$-regularity theorem for those harmonic maps with free boundary. Section~\ref{compactnes} is devoted to compactness properties of minimizing $s$-harmonic maps, and Theorems~\ref{main1} \& \ref{main2} are proved in Section \ref{prfthms}.  

\subsection*{Notation} We shall often identify $\R$ with $\partial  \mathbb{R}^{2}_+=\R\times\{0\}$. More generally, a set $A\subset\mathbb{R}$ can be identified with $A\times\{0\}\subset\partial  \mathbb{R}^{2}_+$. Points in $\mathbb{R}^{2}$ are written $\mathbf{x}=(x,y)$. 
We denote by $B_r(\mathbf{x})$ the open disc in $\mathbb{R}^{2}$ of radius $r$ centered at $\mathbf{x}=(x,y)$.  
For an arbitrary set $\Omega\subset  \mathbb{R}^{2}$, we write $\Omega^+:=\Omega\cap \mathbb{R}^{2}_+$ and $\partial^+ \Omega:=\partial \Omega\cap \mathbb{R}^{2}_+$. 

If $\Omega\subset\R^{2}_+$ is a bounded open set, we shall say that $\Omega$ is  {\bf admissible} whenever 
\begin{itemize}
\item $\partial \Omega$ is Lipschitz regular;  
\vskip2pt
\item the (relative) open set $\partial^0\Omega\subset \partial\R^2_+\simeq\R $ is defined by 
$$\partial^0\Omega:=\Big\{\mathbf{x}\in\partial \Omega\cap\partial\R^{2}_+ : B^+_{r}(\mathbf{x})\subset \Omega \text{ for some $r>0$}\Big \}\,,$$
is non empty and has Lipschitz boundary; 
\vskip2pt

\item $\partial \Omega=\partial^+ \Omega\cup\overline{\partial^0\Omega}\,$.
\end{itemize}
\vskip3pt

\noindent Finally, we denote by ${d}_{\mathcal N}$ the distance function on $\R^d$ to the manifold $\mathcal N$, i.e., 
$${d}_{\mathcal N}(z):=\inf_{p\in \mathcal{N}} |z-p|\,.$$
The tangent and normal spaces to $\mathcal{N}$ at a point $p\in\mathcal{N}$ are denoted by ${\rm Tan}(p,\mathcal{N})$ and ${\rm Nor}(p,\mathcal{N})$, respectively.

\section{Minimizing weighted harmonic maps with free boundary} \label{weightharmap}     

The proof of our results relies on  the already mentioned Caffarelli-Silvestre extension procedure \cite{CS} which allows to rephrase our fractional problem into a local one. Before going into details on the extension of minimizing $s$-harmonic maps, we briefly introduce the resulting local problem and its functional setting.

\subsection{Minimizing weighted harmonic maps} For a bounded admissible open subset $\Omega\subset \R^2_+$, we consider the weighted spaces
$$L^2(\Omega;\R^d,y^ad{\bf x}):=\Big\{v\in L^1_{\rm loc}(\Omega;\R^d): y^{\frac{a}{2}}|v|\in L^2(\Omega)\Big\}\quad \text{with } a:=1-2s>0\,, $$
and 
$$H^1(\Omega;\R^d,y^ad{\bf x}):=\Big\{v\in L^2(\Omega,y^ad{\bf x}): \nabla v\in L^2(\Omega,y^ad{\bf x})\Big\}\,.$$ 
We refer to \cite[Section 2]{MSW} for the main properties of these spaces that we shall use. We simply recall that 
a map $v\in H^1(\Omega;\R^d,y^ad{\bf x})$ has a well defined trace on $\partial^0\Omega$, and the trace operator from $ H^1(\Omega;\R^d,y^ad{\bf x})$ into $L^2(\partial^0\Omega;\R^d)$ is a compact linear operator. 
\vskip3pt
 
 On $H^1(\Omega,y^adxdy)$, we define the weighted Dirichlet energy
 $${\bf E}_s(v,\Omega):=\frac{1}{2}\int_\Omega y^a|\nabla v|^2\,d{\bf x}\,. $$

\begin{definition}
Let $\Omega\subset \R^2_+$ be a bounded admissible open set, and consider a map $v\in H^1(\Omega;\R^d,y^ad{\bf x})$ such that $v({\bf x})\in \mathcal{N}$  a.e. on $\partial^0 \Omega$.  We say that $v$ is a minimizing weighted harmonic map in $\Omega$ with respect to the (partially) free boundary  $v(\partial^0\Omega)\subset \mathcal{N}$ if$$ {\bf E}_s(v,\Omega)\leq {\bf E}_s(w,\Omega)$$
for every competitor $w\in H^1(\Omega,y^ad{\bf x})$ satisfying $w({\bf x})\in \mathcal{N}$ a.e. on $\partial^0 \Omega$, and such that ${\rm spt}(w-v)\subset \Omega\cup\partial^0\Omega$. 
{\sl In short, we shall say $u$ is a minimizing weighted harmonic map with free boundary in $\Omega$.} 
\end{definition}

\begin{remark}
Using variations supported in $\Omega$, one obtains that a minimizing weighted harmonic map $v$ with free boundary in $\Omega$ satisfies
\begin{equation}\label{yharmeq}
{\rm div}(y^a\nabla v)=0\quad\text{in $\Omega$}\,.
\end{equation}
In particular, $v\in C^\infty(\Omega)$ by standard elliptic theory. The regularity issue is  then at (and only at) the free boundary $\partial^0\Omega$. Arguing exactly as \cite[Section 2]{DS}, one obtains 
$$y^a\partial_y v\perp {\rm Tan}(v,\mathcal{N})\quad\text{on $\partial^0\Omega$}$$ 
in the duality sense. In other words, the (full) Euler-Lagrange equation derived from minimality is 
\begin{equation}\label{ELfreebound}
\int_\Omega y^a\nabla v\cdot \nabla \zeta\,d{\bf x}=0
\end{equation}
for every $\zeta\in H^1(\Omega;\R^d,y^ad{\bf x})$  satisfying $\zeta({\bf x})\in {\rm Tan}(v({\bf x}),\mathcal{N})$ for a.e. ${\bf x}\in\partial^0\Omega$, and such that ${\rm spt}(\zeta)\subset\Omega\cup\partial^0\Omega$. 
\end{remark}

\subsection{Extending minimizing $s$-harmonic maps}

We now move on the extension procedure of \cite{CS}. Given a bounded open interval $\omega\subset \R$, we define the extension $u^{\rm e}:\R^2_+\to\R^d$ of a map $u\in \widehat H^s(\omega;\R^d)$ by 
$$u^{\rm e}(x,y):=\sigma_{s}\int_\R \frac{t^{2s} u(t)}{(|x-y|^2+t^{2})^{\frac{1+2s}{2}}}\,dt\quad \text{with }\sigma_s:=\pi^{-\frac{1}{2}}\frac{\Gamma(\frac{1+2s}{2})}{\Gamma(s)}\,.$$
This extension can be referred to as {\sl fractional harmonic extension} of $u$ (by analogy with the case $s=1/2$) as it solves
\begin{equation}\label{eqextint}
\begin{cases}
{\rm div}(y^a\nabla u^{\rm e})=0 & \text{in $\R^2_+$}\,,\\
u^{\rm  e}=u & \text{on $\R\simeq\partial \R^2_+$}\,.
\end{cases}
\end{equation}
It turns out that $u^{\rm e}\in H^1(\Omega;\R^d,y^ad{\bf x})$ for every bounded admissible open set $\Omega\subset \R^2_+$ such that $\overline{\partial^0\Omega}\subset \omega$. In addition, $u^{\rm e}\in L^\infty(\R^2_+)$ whenever $u\in L^\infty(\R)$, and $\|u^{\rm e}\|_{L^\infty(\R^2_+)}\leq \|u\|_{L^\infty}(\R)$. We refer to \cite[Section 2]{MSW} for further details. 
\vskip3pt

We shall make use of the following converse statement to control the fractional energy by the weighted Dirichlet energy. 

\begin{lemma}\label{H1tofracesti}
Let $r>0$ and $v\in H^1(B^+_{2r};\R^d, y^ad{\bf x})$. The trace of $v$ on the interval $\omega_r:=\partial^0B^+_r$ belongs to $H^s(\omega_r;\R^d)$, and 
$$\iint_{\omega_r\times\omega_r}\frac{|v(x)-v(y)|^2}{|x-y|^{1+2s}}\,dxdy\leq  C{\bf E}_s(v,B^+_{2r})\,.$$
for some constant $C=C(s)$.
\end{lemma}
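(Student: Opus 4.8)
The inequality we want is a weighted trace estimate: it bounds a fractional Gagliardo seminorm of the trace on the small half-disc $\omega_r=\partial^0 B_r^+$ by the $y^a$-weighted Dirichlet energy on the larger half-disc $B_{2r}^+$. The plan is to reduce to the model half-disc of radius $1$ by scaling, then establish the estimate there by an explicit decomposition of the difference $v(x)-v(y)$ along a path inside $B_2^+$ joining the two boundary points. First I would check the scaling: if $v_r(\mathbf{x}):=v(r\mathbf{x})$, then $\mathbf{E}_s(v_r,B_2^+)=r^{2s-1}\mathbf{E}_s(v,B_{2r}^+)$ (the weight $y^a$ with $a=1-2s$ contributes $r^{1+a}=r^{2-2s}$, the gradient squared contributes $r^{-2}$, and the area element $r^2$, for a net $r^{2s}$ — wait, recompute: $\int_{B_2^+}y^a|\nabla v_r|^2 = \int_{B_2^+} y^a r^2 |(\nabla v)(r\mathbf{x})|^2\,d\mathbf{x} = r^{-1-a}\int_{B_{2r}^+}\tilde y^a r^2|\nabla v|^2\,d\tilde{\mathbf{x}} = r^{1-a}\mathbf{E}\cdots$); in any case both sides of the claimed inequality scale with the same power of $r$, so the constant $C$ can be taken independent of $r$. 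This is routine bookkeeping.

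The substantive step is the estimate on $B_2^+$. For $x,y\in\omega_1=(-1,1)$ with $x<y$, I would write $v(x)-v(y)$ as a telescoping sum along the polygonal path from $(x,0)$ up to $(x,\ell)$, across to $(y,\ell)$, and back down to $(y,0)$, where $\ell:=|x-y|$ is chosen comparable to the horizontal distance so the path stays in $B_2^+$ (shrinking the constant if needed, or using a slightly different geometry near $\pm 1$). More precisely, using that $v\in C^\infty(B_2^+)$ away from $\partial^0$, one has for a.e.\ such pair
$$v(x)-v(y) = -\int_0^{\ell}\partial_y v(x,t)\,dt + \int_x^y \partial_x v(\tau,\ell)\,d\tau + \int_0^\ell \partial_y v(y,t)\,dt,$$
so $|v(x)-v(y)|^2 \lesssim \ell\int_0^\ell|\nabla v(x,t)|^2\,dt + \ell\int_x^y|\nabla v(\tau,\ell)|^2\,d\tau + \ell\int_0^\ell|\nabla v(y,t)|^2\,dt$ by Cauchy-Schwarz. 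Dividing by $|x-y|^{1+2s}=\ell^{1+2s}$ and integrating in $(x,y)$ over $\omega_1\times\omega_1$, the three terms are handled by Fubini: for the vertical terms one integrates out the free variable ($y$, resp.\ $x$) first, which costs a factor $\int \ell^{-2s}\,d\ell \lesssim 1$ since $2s<1$, and then one must convert the leftover $\int_0^{\sim 1}|\nabla v(x,t)|^2\,dt$ into the weighted integral $\int_0^{\sim1}t^{-a}\cdot t^a|\nabla v(x,t)|^2\,dt$; because $t\le\ell$ and $\ell$ has already been used, one instead keeps $t$ coupled to $\ell$ via $t\le\ell$ and integrates $\ell$ from $t$ to $2$, giving $\int_t^2\ell^{-1-2s}\,d\ell\lesssim t^{-2s}=t^{-a}\cdot t^{-1}$ — hmm, this produces an extra $t^{-1}$, so one should instead not integrate $t$ freely but rather, in the vertical term, first bound $\int_0^\ell|\nabla v(x,t)|^2\,dt\le \int_0^\ell t^{-a}\,t^a|\nabla v(x,t)|^2\,dt$ and use that $t^{-a}\le \ell^{-a}$ is false ($-a<0$)... so in fact $t^{-a}\ge \ell^{-a}$; the cleanest route is to keep the pair $(t,\ell)$ and use $\int\!\!\int_{0<t<\ell<2}\ell^{-1-2s}\,d\ell\,(\cdot)$, bounding $\int_t^2\ell^{-1-2s}\,d\ell = \tfrac{1}{2s}(t^{-2s}-2^{-2s})\le \tfrac{1}{2s}t^{-2s}$, and then $t^{-2s}=t^{a-1}$; since $a-1=-2s$, this still leaves $t^{-2s}$ against $t^a|\nabla v|^2$, i.e.\ $t^{-2s}=t^{-a}t^{a-2s}$... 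The honest statement is that after Fubini the vertical contribution is $\lesssim \int_0^2\int_{\omega_{\sim1}} t^{-2s}|\nabla v(x,t)|^2\,dx\,dt$ and, since $-2s = a-1 \le a$ only when... no: $a-1<a$ always, and on $t\in(0,2)$ we have $t^{-2s}\le C t^{a}$ iff $-2s\le a$ iff $-2s\le 1-2s$, which is true; but that inequality goes the wrong way for small $t$. The resolution, which I would spell out carefully, is that one does \emph{not} integrate the path variable $t$ all the way to $0$: one uses a dyadic/Whitney decomposition of $\omega_1\times\omega_1$ near the diagonal so that on the piece where $|x-y|\sim 2^{-k}$ the path height is exactly $\ell\sim 2^{-k}$, giving $\int_{|x-y|\sim 2^{-k}}|x-y|^{-1-2s}|v(x)-v(y)|^2 \lesssim 2^{k\cdot 2s}\cdot 2^{-k}\int (\text{energy on the }2^{-k}\text{-box}) = 2^{-k(1-2s)}\int\cdots$, and then $2^{-k(1-2s)}=2^{-ka}$ is precisely the weight $y^a$ evaluated at height $\sim 2^{-k}$ — so summing over $k$ reconstructs $\mathbf{E}_s(v,B_2^+)$ with each Whitney box counted boundedly many times. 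The horizontal term is easier and is absorbed the same way. This Whitney-layer matching of the power $1-2s=a$ to the weight is the crux.

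The main obstacle, then, is this matching of exponents: one needs the geometric series in the dyadic decomposition to converge, which is exactly where $a=1-2s>0$ (equivalently $s<1/2$) is used, and one needs the path heights chosen so that $y^a$ at the path height reproduces the weight appearing in $\mathbf{E}_s$. A cleaner alternative, which I would likely use if available from \cite[Section 2]{MSW}, is to invoke the known trace characterization there: the trace operator maps $H^1(B_2^+,y^a\,d\mathbf{x})$ continuously into $H^s(\omega_1)$ (this is the standard weighted Sobolev trace theorem underlying the Caffarelli–Silvestre extension), which is precisely the asserted inequality on the unit scale; then only the scaling computation of the first paragraph remains. I would present the argument invoking that trace theorem as the short proof, and sketch the Whitney-decomposition argument above as the self-contained alternative. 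Either way, the one genuinely load-bearing point is $s\in(0,1/2)$, used to make the relevant $t$- or dyadic integral finite.
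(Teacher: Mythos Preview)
Your ``cleaner alternative'' is close to the paper's actual proof, but you are understating the work: what is available from \cite{CS} and \cite[Lemma~2.8]{MSW} is only the \emph{global} inequality
\[
\iint_{\R\times\R}\frac{|w(x)-w(y)|^2}{|x-y|^{1+2s}}\,dx\,dy \;\lesssim\; \int_{\R^2_+} y^a|\nabla w|^2\,d\mathbf{x}
\]
for $w\in H^1(\R^2_+,y^a d\mathbf{x})$, not a local version on $B_2^+$. The content of the lemma is precisely the localization, and the paper does it as follows: subtract from $v$ its average over $B_{2r}^+$, multiply by a cutoff $\zeta$ with $\zeta=1$ on $B_r^+$ and $\mathrm{spt}\,\zeta\subset B_{3r/2}^+$, and estimate $\int_{\R^2_+}y^a|\nabla(\zeta v)|^2 \le 2\mathbf{E}_s(v,B_{2r}^+)+Cr^{-2}\int_{B_{2r}^+}y^a|v|^2 \lesssim \mathbf{E}_s(v,B_{2r}^+)$ using Poincar\'e's inequality in $H^1(B_{2r}^+,y^a d\mathbf{x})$. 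Then the global inequality applied to $\zeta v$, restricted on the left to $\omega_r\times\omega_r$ where $\zeta v=v$, gives the result. No scaling reduction is needed: the Poincar\'e constant scales correctly on its own. The load-bearing ingredient is thus the weighted Poincar\'e inequality (valid because $y^a$ is an $A_2$ weight for $a\in(-1,1)$, see \cite{FKS}), and your final remark that $s<1/2$ is the ``genuinely load-bearing point'' is incorrect --- nothing in the argument singles out $s<1/2$, and the inequality holds for all $s\in(0,1)$.

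Your Whitney/path-decomposition route is a genuinely different and valid approach: it is the standard direct proof of the weighted trace inequality and would yield a self-contained argument avoiding both the global Caffarelli--Silvestre identity and the cutoff. The exponent bookkeeping you found tangled \emph{is} the whole argument, and once straightened out (pair height $\ell\sim|x-y|$ with the weight $\ell^a$, then sum over dyadic scales) it works cleanly. The paper's cutoff-plus-Poincar\'e proof is shorter given the cited literature; your route buys independence from it.
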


\begin{proof}
Without loss of generality, we may assume that $v$ has a vanishing average over the half ball $B^+_{2r}$. Let $\zeta\in C^\infty(B_{2r};[0,1])$ be a cut-off function such that $\zeta({\bf x})=1$ for $|{\bf x}|\leq r$,  $\zeta({\bf x})=0$ for $|{\bf x}|\geq 3r/2$, and satisfying $|\nabla\zeta|\leq C/r$. The function $v_r:=\zeta v$ belongs to $H^1(\R^2_+;\R^d, y^ad{\bf x})$, and Poincar\'e's inequality in $H^1(\R^2_+;\R^d, y^ad{\bf x})$ (see e.g.~\cite{FKS}) yields 
\begin{equation}\label{esticpa1}
\int_{\R^2_+}y^a|\nabla v_r|^2\,d{\bf x}\leq 2{\bf E}_s(v,B^+_{2r})+ \frac{C}{r^2} \int_{B^+_{2r}}y^a|v|^2\,d{\bf x}\leq C_s{\bf E}_s(v,B^+_{2r})\,.
\end{equation}
On the other hand, it follows from \cite{CS} (see also \cite[Lemma 2.8]{MSW}) that 
\begin{multline}\label{esticpa2}
\iint_{\omega_r\times\omega_r}\frac{|v(x)-v(y)|^2}{|x-y|^{1+2s}}\,dxdy\leq \iint_{\R\times\R}\frac{|v_r(x)-v_r(y)|^2}{|x-y|^{1+2s}}\,dxdy\\
\leq C_s \int_{\R^2_+}y^a|\nabla v_r|^2\,d{\bf x}\,,
\end{multline}
for some constant $C_s$ depending only on $s$. Gathering \eqref{esticpa1} and \eqref{esticpa2} leads to the announced estimate.
\end{proof}

The following proposition  draws links between minimizing $s$-harmonic maps and minimizing weighted harmonic maps with free boundary.  
Its proof follows exactly as in \cite[Proposition 4.9]{MS} (see also \cite[Corollary 2.13]{MSW}), and we shall omit it. 

\begin{proposition}
Let $\omega\subset\R$ be a bounded open interval, and $u\in \widehat H^s(\omega;\mathcal{N})$  a minimizing $s$-harmonic map in $\omega$. Then $u^{\rm e}$ is a minimizing weighted harmonic map in $\Omega$ with free boundary  in every bounded admissible open set $\Omega\subset\R^2_+$ satisfying $\overline{\partial^0\Omega}\subset \omega$. 
\end{proposition}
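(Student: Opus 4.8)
The plan is to argue directly from the definitions, using three ingredients furnished by the Caffarelli--Silvestre extension theory: the Dirichlet principle for the degenerate operator $\mathrm{div}(y^a\nabla\cdot)$, the identity relating the weighted Dirichlet energy of a fractional harmonic extension to the fractional energy of its trace (as in \cite{CS} and \cite[Section 2]{MSW}), and the trace estimate of Lemma \ref{H1tofracesti}. The structure mirrors \cite[Proposition 4.9]{MS}, transposed to the weight $y^a$.

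Fix a bounded admissible $\Omega\subset\R^2_+$ with $K:=\overline{\partial^0\Omega}\subset\omega$, and let $w\in H^1(\Omega;\R^d,y^a d{\bf x})$ be an arbitrary competitor for $u^{\rm e}$ in $\Omega$, i.e.\ $w({\bf x})\in\mathcal N$ a.e.\ on $\partial^0\Omega$ and $\mathrm{spt}(w-u^{\rm e})\subset\Omega\cup\partial^0\Omega$. We must show ${\bf E}_s(u^{\rm e},\Omega)\le{\bf E}_s(w,\Omega)$. First I would glue: set $W:=w$ on $\Omega$ and $W:=u^{\rm e}$ on $\R^2_+\setminus\Omega$. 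Since $\mathrm{spt}(w-u^{\rm e})$ stays in a compact subset of $\Omega\cup\partial^0\Omega$, the traces of $w$ and $u^{\rm e}$ agree on $\partial^+\Omega$, so $W-u^{\rm e}\in H^1(\R^2_+;\R^d,y^a d{\bf x})$ with compact support in $\Omega\cup\partial^0\Omega$; in particular $W\in H^1(\Omega';\R^d,y^a d{\bf x})$ for every bounded admissible $\Omega'$. Let $\psi$ denote the trace of $W$ on $\R\simeq\partial\R^2_+$. Then $\psi=u$ on $\R\setminus K$, while on $\partial^0\Omega$ one has $\psi=w|_{\partial^0\Omega}\in\mathcal N$; hence $\psi(x)\in\mathcal N$ for a.e.\ $x\in\R$ (using $u(x)\in\mathcal N$ a.e.) and $\mathrm{spt}(\psi-u)\subset K\subset\omega$. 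Lemma \ref{H1tofracesti}, applied on a finite cover of $K$ by half-discs contained in $\Omega$, gives $\psi|_K\in H^s(K;\R^d)$; together with $u\in\widehat H^s(\omega;\mathcal N)$ and the compact localization of $\psi-u$, this yields $\psi\in\widehat H^s(\omega;\mathcal N)$.

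Next I would use minimality on both sides. Since $\mathrm{spt}(\psi-u)\subset\omega$, minimality of $u$ gives $\mathcal E_s(u,\omega)\le\mathcal E_s(\psi,\omega)$; as $\psi=u$ on $\R\setminus\omega$, this amounts to $\mathcal E_s(\psi,\omega)-\mathcal E_s(u,\omega)\ge0$, a well-defined finite increment because $\psi-u\in H^s(\R;\R^d)$ has compact support. On the other side, let $\psi^{\rm e}$ be the fractional harmonic extension of $\psi$, and note $\psi^{\rm e}-u^{\rm e}=(\psi-u)^{\rm e}$ lies in $\dot H^1(\R^2_+;y^a d{\bf x})$. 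The map $W-\psi^{\rm e}=(W-u^{\rm e})-(\psi-u)^{\rm e}$ has zero trace on $\R$ and lies in $\dot H^1(\R^2_+;y^a d{\bf x})$; expanding ${\bf E}_s(W,\cdot)$ around $\psi^{\rm e}$ and using that $\psi^{\rm e}$ solves \eqref{eqextint} kills the cross term, so, after cancelling the (possibly infinite) energy of $u^{\rm e}$ on both sides,
\begin{equation*}
{\bf E}_s(W,\R^2_+)-{\bf E}_s(u^{\rm e},\R^2_+)\ \ge\ {\bf E}_s(\psi^{\rm e},\R^2_+)-{\bf E}_s(u^{\rm e},\R^2_+)\,.
\end{equation*}
The extension energy identity of \cite{CS} (see \cite[Section 2]{MSW}) then provides $d_s>0$ with
\begin{equation*}
{\bf E}_s(\psi^{\rm e},\R^2_+)-{\bf E}_s(u^{\rm e},\R^2_+)\ =\ d_s\big(\mathcal E_s(\psi,\omega)-\mathcal E_s(u,\omega)\big)\ \ge\ 0\,.
\end{equation*}
Since $W=u^{\rm e}$ on $\R^2_+\setminus\Omega$, the left-hand side of the first display equals ${\bf E}_s(w,\Omega)-{\bf E}_s(u^{\rm e},\Omega)$, and chaining the two displays gives ${\bf E}_s(u^{\rm e},\Omega)\le{\bf E}_s(w,\Omega)$, which is the claim.

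The step requiring the most care is making rigorous this manipulation of ``differences of energies'': neither ${\bf E}_s(u^{\rm e},\R^2_+)$ nor ${\bf E}_s(W,\R^2_+)$ need be finite, so the Dirichlet principle and the extension identity have to be applied to the increments $W-u^{\rm e}$ and $(\psi-u)^{\rm e}$ — which do lie in $\dot H^1(\R^2_+;y^a d{\bf x})$ — with the cancellations justified through the weak equations \eqref{eqextint}, \eqref{ELfreebound} and the integration-by-parts identity $-d_s'\lim_{y\to0^+}y^a\partial_y u^{\rm e}=(-\Delta)^s u$ in $\mathscr D^\prime(\omega)$, every boundary contribution along $\partial^+\Omega$ cancelling between $W$ and $u^{\rm e}$. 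The accompanying bookkeeping of the normalization constants $\gamma_s,\sigma_s,d_s$ is the only genuinely delicate point; the gluing and trace parts are soft.
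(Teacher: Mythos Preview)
Your proposal is correct and follows precisely the route the paper points to: the paper omits the proof and refers to \cite[Proposition~4.9]{MS} (and \cite[Corollary~2.13]{MSW}), and your argument---glue the competitor to $u^{\rm e}$, read off an admissible trace $\psi$, invoke the Dirichlet principle for the degenerate extension to compare $W$ with $\psi^{\rm e}$, and then use minimality of $u$ against $\psi$---is exactly that scheme transposed to the weight $y^a$. You have also correctly isolated the only genuinely delicate point, namely that the global energies ${\bf E}_s(u^{\rm e},\R^2_+)$, ${\bf E}_s(\psi^{\rm e},\R^2_+)$ need not be finite, so the Dirichlet principle and the extension identity must be applied at the level of increments (using that $W-u^{\rm e}$ is compactly supported and $(\psi-u)^{\rm e}\in\dot H^1(\R^2_+;y^a d{\bf x})$, with the cross terms killed by \eqref{eqextint}); once this bookkeeping is carried out the constants align and with the paper's normalization of $\gamma_s$ one in fact gets $d_s=1$.
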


\subsection{The monotonicity formula}

In this subsection, we consider a bounded admissible open set $\Omega\subset\R^2_+$, and a minimizing weighted harmonic map $v\in H^1(\Omega;\R^d,y^ad{\bf x})$ with free boundary. We present the fundamental monotonicity formula involving the following density function: for a point ${\bf x}_0=(x_0,0)\in\partial^0\Omega$ and $r>0$ such that $B^+_r({\bf x_0})\subset \Omega$ , we set 
$$\mathbf{\Theta}_v({\bf x}_0,r):=\frac{1}{r^{1-2s}}{\bf E}_s\big(v,B^+_r({\bf x}_0)\big)\,. $$ 

\begin{lemma}\label{formmonot}
For every ${\bf x}_0\in\partial^0\Omega$ and $r>\rho>0$ such that $B^+_r({\bf x}_0)\subset \Omega$, 
$$ \mathbf{\Theta}_v({\bf x}_0,r)-\mathbf{\Theta}_v({\bf x}_0,\rho)=\int_{B^+_r({\bf x}_0)\setminus B^+_\rho({\bf x}_0)}y^a\frac{|({\bf x}-{\bf x}_0)\cdot\nabla v|^2}{|{\bf x}-{\bf x}_0|^{3-2s}}\,d{\bf x}\,.$$
\end{lemma}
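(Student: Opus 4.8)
The plan is to derive the monotonicity formula from the stationarity (inner-variation) identity for the weighted energy. First I would record the precise scaling: since $v$ is minimizing, it is in particular stationary, meaning $\frac{d}{dt}\big|_{t=0}{\bf E}_s(v\circ \Phi_t,\Omega)=0$ for every one-parameter family of diffeomorphisms $\Phi_t$ of $\R^2_+$ that fix $\partial\R^2_+$ (as a set) and are the identity near $\partial^+\Omega$. This requires checking that such inner variations are admissible competitors: they preserve the constraint $w({\bf x})\in\mathcal N$ on $\partial^0\Omega$ (because reparametrizing a map whose boundary values already lie in $\mathcal N$ keeps them in $\mathcal N$) and keep $\mathrm{spt}(w-v)\subset\Omega\cup\partial^0\Omega$. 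I would check that the vector field generating $\Phi_t$ must be tangent to $\{y=0\}$, i.e. its $y$-component vanishes on $\partial^0\Omega$, so that $\Phi_t$ maps $\R^2_+$ to itself and $\partial^0\Omega$ to itself.

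Next I would compute the inner-variation (stress-energy / Pohozaev) identity explicitly with the radial field $X({\bf x})=\eta(|{\bf x}-{\bf x}_0|)\,({\bf x}-{\bf x}_0)$, where $\eta$ is a smooth cutoff approximating $\ind_{[0,r]}$; note this field is automatically tangent to $\{y=0\}$ since its $y$-component is $\eta\cdot y$ which vanishes at $y=0$, so the boundary term on $\partial^0\Omega$ drops out. The general inner-variation identity for the weighted energy $\frac12\int_\Omega y^a|\nabla v|^2\,d{\bf x}$ reads
$$\int_\Omega y^a\Big(\tfrac12|\nabla v|^2\,\mathrm{div}X - \nabla v^T\, DX\, \nabla v\Big)\,d{\bf x} + \tfrac{a}{2}\int_\Omega y^{a-1} X_y\,|\nabla v|^2\,d{\bf x}=0\,,$$
where $X_y$ is the $y$-component of $X$. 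Plugging in the radial field and letting $\eta\to\ind_{[0,r]}$, the bulk terms combine (as in the classical computation: $\mathrm{div}X=2\eta + |{\bf x}-{\bf x}_0|\eta'$, $DX$ contributes $\eta\,\mathrm{Id}$ plus the rank-one piece $\eta'\frac{({\bf x}-{\bf x}_0)\otimes({\bf x}-{\bf x}_0)}{|{\bf x}-{\bf x}_0|}$, and the weight term $\tfrac a2 y^{a-1}X_y=\tfrac a2\eta y^a$ exactly accounts for the $1-2s=a$ homogeneity correction so that the normalization $r^{-(1-2s)}$ appears). After dividing by the appropriate power of $r$ and rearranging, one gets
$$\frac{d}{dr}\mathbf\Theta_v({\bf x}_0,r)=\frac{1}{r^{1-2s}}\int_{\partial^+B^+_r({\bf x}_0)} y^a\,\frac{|({\bf x}-{\bf x}_0)\cdot\nabla v|^2}{|{\bf x}-{\bf x}_0|^2}\,d\mathcal H^1\,,$$
and integrating this from $\rho$ to $r$ and using the coarea formula yields the stated identity. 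I would also invoke $v\in C^\infty(\Omega)$ (from \eqref{yharmeq}) and a standard capacity argument to justify that the inner variations are legitimate and that all integrals converge near $\partial^0\Omega$ despite the degenerate weight.

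The main obstacle I expect is twofold: (i) carefully justifying that radial inner variations are admissible in the free-boundary minimization problem — in particular that reparametrization does not violate the manifold constraint on $\partial^0\Omega$ and that one is allowed to vary ``up to'' the free boundary; and (ii) handling the degenerate weight $y^a$ rigorously, namely showing the stress-energy identity holds with no spurious boundary contribution along $\partial^0\Omega$. For (ii) the key point is precisely that the test field has vanishing $y$-component on $\{y=0\}$, so even though $y^a\partial_y v$ is only defined in a weak (duality) sense there, the product pairing against the tangential field produces no boundary term; making this precise likely requires approximating $\Omega$ from inside by domains compactly contained in $\R^2_+\cup\partial^0\Omega$ and passing to the limit, using the $L^2(y^a)$ control on $\nabla v$ near the free boundary. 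Once these points are settled, the computation is the classical Almgren-type monotonicity argument adapted to the $A_2$-weight $y^a$, essentially as in \cite{CS,DS,MS}.
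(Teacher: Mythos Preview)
Your proposal is correct and follows essentially the same route as the paper: establish the stationarity identity \eqref{fromstat} by using the flow of a vector field tangent to $\{y=0\}$ (so that $v\circ\phi_t$ remains an admissible free-boundary competitor), then specialize to a radial field centered at ${\bf x}_0$ to obtain the derivative of $\mathbf{\Theta}_v({\bf x}_0,r)$ and integrate. The paper simply records the general first-variation identity and defers the radial computation to \cite[Lemma~6.2]{MSW}, whereas you spell out that step and the associated technical justifications more explicitly; the content is the same.
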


\begin{proof}
The proof follows classically from the stationarity implied by minimality. To be more precise, let us consider a vector field ${\bf X}=({\bf X_1},{\bf X}_2)\in C^1(\overline{\R^2}_+;\R^2)$ compactly supported in $\Omega\cup\partial^0\Omega$ and such that ${\bf X}_2=0$ on $\R\times\{0\}$. Then consider a compactly supported $C^1$-extension of ${\bf X}$ to the whole $\R^2$, still denoted by ${\bf X}$. We define $\{\phi_t\}_{t\in\R}$ the flow on $\R^2$ generated by ${\bf X}$, i.e., for ${\bf x}\in\R^2$, the map $t\mapsto \phi_t({\bf x})$ is defined as the unique solution of the differential equation
$$\begin{cases}
\displaystyle \frac{d}{dt} \phi_t({\bf x})= {\bf X}\big(\phi_t(x)\big)\,,\\
\phi_0({\bf x})={\bf x}\,.
\end{cases}$$
Notice that $\phi_t(\Omega)=\Omega$, $\phi_t(\partial^0\Omega)=\partial^0\Omega$, and ${\rm spt}(\phi_t-{\rm id}_{\R^2})\cap \overline{\R^2}_+\subset \Omega\cup\partial^0\Omega$. As a consequence, the map $v_t:=v\circ\phi_t\in H^1(\Omega;\R^d, y^a\,d{\bf x})$ satisfies $v_t({\bf x})\in \mathcal{N}$ a.e. on $\partial^0\Omega$, and ${\rm spt}(v_t-v)\subset\Omega\cup\partial^0\Omega$. 
By minimality ${\bf E}_s(v,\Omega)\leq {\bf E}_s(v_t)$ for every $t\in\R$, so that 
$$\left[\frac{d}{dt}{\bf E}_s(v_t,\Omega)\right]_{t=0}=0\,. $$
Computing this derivative (see e.g. \cite[Chapter 2.2]{Sim} or \cite{MSW}) leads to 
\begin{equation}\label{fromstat}
\int_\Omega y^a \left(|\nabla v|^2{\rm div}\,{\bf X}-2\sum_{i,j=1}^2(\partial_iv\cdot\partial_j v)\partial_j{\bf X}_i\right)\,d{\bf x}+a\int_\Omega y^{a-1}|\nabla v|^2{\bf X}_2\,d{\bf x}=0
\end{equation}
for every vector field ${\bf X}=({\bf X_1},{\bf X}_2)\in C^1(\overline{\R^2}_+;\R^2)$ compactly supported in $\Omega\cup\partial^0\Omega$ and such that ${\bf X}_2=0$ on $\R\times\{0\}$. From equation \eqref{fromstat}, the announced monotonicity  follows as in \cite[Lemma 6.2]{MSW}. 
\end{proof}

\begin{corollary}\label{corodens}
For every ${\bf x}_0\in\partial^0\Omega$, the limit
$$\mathbf{\Theta}_v({\bf x}_0):=\lim_{r\downarrow 0}  \mathbf{\Theta}_v({\bf x}_0,r)$$
exists, and the function $\mathbf{\Theta}_v:\partial^0\Omega\to [0,\infty)$ is upper semicontinuous. In addition, 
\begin{equation}\label{sam2223}
 \mathbf{\Theta}_v({\bf x}_0,r)-\mathbf{\Theta}_v({\bf x}_0)=\int_{B^+_r({\bf x}_0)}y^a\frac{|({\bf x}-{\bf x}_0)\cdot\nabla v|^2}{|{\bf x}-{\bf x}_0|^{3-2s}}\,d{\bf x}\,.
 \end{equation}
\end{corollary}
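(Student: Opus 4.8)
The plan is to deduce Corollary~\ref{corodens} directly from the monotonicity formula of Lemma~\ref{formmonot}. First I would observe that the right-hand side of Lemma~\ref{formmonot} is a nonnegative quantity (being the integral of $y^a\geq 0$ against a nonnegative integrand), so the map $r\mapsto \mathbf{\Theta}_v({\bf x}_0,r)$ is nondecreasing on the set of admissible radii $\{r>0 : B^+_r({\bf x}_0)\subset\Omega\}$. Since it is also bounded below by $0$ (as ${\bf E}_s(v,\cdot)\geq 0$), the limit $\mathbf{\Theta}_v({\bf x}_0):=\lim_{r\downarrow 0}\mathbf{\Theta}_v({\bf x}_0,r)$ exists and lies in $[0,\infty)$. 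Then \eqref{sam2223} follows by sending $\rho\downarrow 0$ in the identity of Lemma~\ref{formmonot}: the left-hand side converges to $\mathbf{\Theta}_v({\bf x}_0,r)-\mathbf{\Theta}_v({\bf x}_0)$ by definition of the limit, while the right-hand side converges by monotone convergence, since the integrands over $B^+_r({\bf x}_0)\setminus B^+_\rho({\bf x}_0)$ increase pointwise (in the sense that the domains increase) to the integrand over $B^+_r({\bf x}_0)$ as $\rho\downarrow 0$, and the integrable singular weight $|{\bf x}-{\bf x}_0|^{-(3-2s)}$ is controlled because the finite value at $\rho$ fixed plus monotonicity forces the limiting integral to be finite.

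For the upper semicontinuity of $\mathbf{\Theta}_v:\partial^0\Omega\to[0,\infty)$, the standard argument is that for each fixed admissible radius $r$, the function ${\bf x}_0\mapsto \mathbf{\Theta}_v({\bf x}_0,r)=r^{-(1-2s)}{\bf E}_s(v,B^+_r({\bf x}_0))$ is continuous in ${\bf x}_0$ (the energy ${\bf E}_s(v,B^+_r({\bf x}_0))$ depends continuously on the center since $y^a|\nabla v|^2\in L^1_{\mathrm{loc}}$ and translating the half-ball moves an $L^1$ mass continuously). Then $\mathbf{\Theta}_v({\bf x}_0)=\inf_{r>0}\mathbf{\Theta}_v({\bf x}_0,r)$, being an infimum of continuous functions, is upper semicontinuous. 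One should be a little careful that as ${\bf x}_0$ varies near a given point, the radii $r$ for which $B^+_r({\bf x}_0)\subset\Omega$ form an open condition, so for ${\bf x}_0'$ close enough to ${\bf x}_0$ any fixed admissible $r$ for ${\bf x}_0$ remains admissible; this makes the $\inf$-of-continuous argument legitimate on a neighborhood.

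The only genuinely delicate point is the passage to the limit $\rho\downarrow 0$ in the integral: one must justify that $\int_{B^+_r({\bf x}_0)}y^a\frac{|({\bf x}-{\bf x}_0)\cdot\nabla v|^2}{|{\bf x}-{\bf x}_0|^{3-2s}}\,d{\bf x}$ is finite and equals the limit of the annular integrals. This is immediate from Lemma~\ref{formmonot} itself: for $\rho<r$ fixed we already know $\int_{B^+_r\setminus B^+_\rho} = \mathbf{\Theta}_v({\bf x}_0,r)-\mathbf{\Theta}_v({\bf x}_0,\rho)\leq \mathbf{\Theta}_v({\bf x}_0,r)-\mathbf{\Theta}_v({\bf x}_0)<\infty$, uniformly in $\rho$, so monotone convergence gives both finiteness of the full integral and the identity \eqref{sam2223}. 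I do not expect any serious obstacle here — the corollary is a formal consequence of the monotonicity formula together with elementary measure-theoretic facts. If anything, the point requiring the most care in a fully written-out proof is the continuity of ${\bf x}_0\mapsto{\bf E}_s(v,B^+_r({\bf x}_0))$ near points of $\partial^0\Omega$, where one uses that $v\in H^1(\Omega;\R^d,y^ad{\bf x})$ and that $\Omega$ is admissible so that small half-balls centered on $\partial^0\Omega$ stay inside $\Omega$.
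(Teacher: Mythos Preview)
Your proposal is correct and follows essentially the same approach as the paper: the paper's proof simply notes that the existence of the limit and \eqref{sam2223} are straightforward consequences of Lemma~\ref{formmonot}, and that $\mathbf{\Theta}_v$ is upper semicontinuous as a pointwise limit of a decreasing family of continuous functions. Your write-up just fills in the details (monotone convergence for \eqref{sam2223}, the $\inf$-of-continuous argument for upper semicontinuity) that the paper leaves implicit.
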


\begin{proof}
The existence of the limit defining $\mathbf{\Theta}_v$ as well as \eqref{sam2223}  are straightforward consequences of Lemma \ref{formmonot}. Then $\mathbf{\Theta}_v$ is upper semicontinuous as a pointwise limit of a decreasing family of continuous functions. 
\end{proof}

\section{The $\eps$-regularity theorem} \label{epsregsec}  

\subsection{An extension lemma and the hybrid inequality} This subsection is essentially devoted to the construction of comparison maps. We shall start with the construction of competitors from a boundary data satisfying a small oscillation condition. Testing minimality against such competitors leads to the so-called {\sl hybrid inequality} (see \cite{HL,HL2}), a central estimate in the proof of the $\varepsilon$-regularity theorem. 
\vskip3pt

Let us start with an elementary lemma. 

\begin{lemma}\label{firstesti}
Let $v\in H^1(\partial^+ B_1;\mathbb{R}^{d},y^ad{\bf x})$ be such that  $v(\pm1,0)\in \mathcal{N}$. Then, 
$$d^2_{\mathcal N}\big(v({\bf x})\big)\leq \left(\int_{\partial^+ B_1}y^{a}|\partial_\tau v|^2\,d\mathcal{H}^1\right)^{1/2}\left(\int_{\partial^+ B_1}y^{-a}d^2_{\mathcal N}(v)\,d\mathcal{H}^1\right)^{1/2}$$ 
for every ${\bf x}\in\partial^+ B_1$.
\end{lemma}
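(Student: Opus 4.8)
The estimate is a one-dimensional Sobolev-type (Morrey) inequality on the curve $\partial^+B_1$, adapted to the weight $y^a$. The key point is that $d_{\mathcal N}(v(\pm 1,0))=0$ since the endpoints lie on $\mathcal N$, so one can integrate $\partial_\tau\big(d^2_{\mathcal N}(v)\big)$ from an endpoint. I would parametrize the semicircle $\partial^+B_1$ by $\theta\mapsto(\cos\theta,\sin\theta)$ with $\theta\in[0,\pi]$, so that $y=\sin\theta$, $d\mathcal H^1=d\theta$, and the tangential derivative $\partial_\tau$ corresponds to $d/d\theta$. Write $f(\theta):=d^2_{\mathcal N}(v(\cos\theta,\sin\theta))$, which vanishes at $\theta=0$ and $\theta=\pi$.

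First I would record the chain rule estimate $|\partial_\tau f|=|\nabla d^2_{\mathcal N}(v)\cdot\partial_\tau v|\leq 2\,d_{\mathcal N}(v)\,|\partial_\tau v|$, using that $d_{\mathcal N}$ is Lipschitz with $|\nabla d_{\mathcal N}|\leq 1$ (so $|\nabla d^2_{\mathcal N}|\le 2 d_{\mathcal N}$) wherever it is differentiable; since $v\in H^1$ of the weighted space on the curve, $f$ is absolutely continuous on $(0,\pi)$ and this holds a.e. Then, for a fixed point $\theta_0\in(0,\pi)$, I would write
$$
f(\theta_0)=\int_0^{\theta_0}\partial_\tau f\,d\theta\leq \int_0^{\pi}|\partial_\tau f|\,d\theta\leq 2\int_0^\pi d_{\mathcal N}(v)\,|\partial_\tau v|\,d\theta\,.
$$
Now insert the weight by writing the integrand as $\big(y^{a/2}|\partial_\tau v|\big)\big(y^{-a/2}d_{\mathcal N}(v)\big)$ and apply the Cauchy–Schwarz inequality:
$$
f(\theta_0)\leq 2\left(\int_0^\pi y^{a}|\partial_\tau v|^2\,d\theta\right)^{1/2}\left(\int_0^\pi y^{-a}d^2_{\mathcal N}(v)\,d\theta\right)^{1/2}.
$$
Translating back to $\partial^+B_1$ gives exactly the claimed bound (the constant $2$ is absorbed, or one can keep track of it; in any case the statement as written should be read with the understanding that a dimensional constant may be needed, or the factor of $2$ is harmless to carry).

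The only genuine subtlety — and the step I would be most careful about — is the regularity justification: one must know that the restriction of $v$ to $\partial^+B_1$ is absolutely continuous along the curve (away from the endpoints, where the weight $y^a$ degenerates) so that the fundamental theorem of calculus applies to $f$, and that the two integrals on the right-hand side are finite for the estimate to be non-vacuous; both follow from $v\in H^1(\partial^+B_1;\mathbb R^d,y^a d\mathbf x)$ together with $0<a<1$, since $y^{\pm a}$ is locally integrable near the endpoints in one dimension. Everything else is the elementary Cauchy–Schwarz argument above. I would also note that no minimality or harmonicity is used here — this is purely a trace/Sobolev lemma — which is why the proof is short.
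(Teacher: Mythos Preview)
Your argument is essentially identical to the paper's: the paper also uses the chain rule bound $|\partial_\tau d^2_{\mathcal N}(v)|\leq 2\,d_{\mathcal N}(v)\,|\partial_\tau v|$, integrates from an endpoint where $d_{\mathcal N}(v)=0$ via the fundamental theorem of calculus, and then applies Cauchy--Schwarz with the split $y^{a/2}\cdot y^{-a/2}$, after noting the embedding $H^1(\partial^+B_1;\mathbb R^d,y^a d\mathbf x)\hookrightarrow W^{1,1}(\partial^+B_1;\mathbb R^d)$ to justify continuity. Regarding the constant you flagged: integrating from \emph{both} endpoints (using $v(\pm1,0)\in\mathcal N$) and averaging removes the factor $2$ and yields the inequality exactly as stated.
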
 

\begin{proof}
First recall that $H^1(\partial^+ B_1;\mathbb{R}^{d},y^ad{\bf x})\hookrightarrow W^{1,1}(\partial^+ B_1;\mathbb{R}^{d})$, so that maps in $H^1(\partial^+ B_1;\mathbb{R}^{d},y^ad{\bf x})$ are continuous on $\overline{\partial^+B_1}$. Then notice that the function $d_{\mathcal{N}}$ is $1$-Lipschitz, and by chain rule one derives $|\nabla d^2_{\mathcal{N}}|\leq 2 d_{\mathcal{N}}$ a.e. in $\R^d$. In turns, it implies that 
$d^2_{\mathcal{N}}(u) \in H^1(\partial^+ B_1,y^ad{\bf x})$ and 
$$|\partial_\tau d^2_{\mathcal{N}}(v)|\leq 2 d_{\mathcal{N}}(v)|\partial_\tau v| \quad \text{a.e. on $\partial^+B_1$}\,.$$
Since $v(-1,0)\in\mathcal{N}$, this estimate implies that for every ${\bf x}\in\partial^+ B_1$, 
$$d^2_{\mathcal N}\big(v({\bf x})\big)\leq 2\int_{((-1,0),{\bf x})}d_{\mathcal{N}}(v)|\partial_\tau v|\,d\mathcal{H}^1 \,,$$
where $((-1,0),{\bf x})$ denotes the arc in $\partial^+ B_1$ going from $(-1,0)$ to ${\bf x}$. The announced inequality  then follows from Cauchy-Schwarz inequality. 
\end{proof}

\begin{lemma}[\bf Comparison maps]\label{lemext}
There exist two constants $\varepsilon_0=\varepsilon_0(\mathcal{N})>0$ and $C=C(\mathcal{N},s)$ such that the following holds. 
Let $v\in H^1(\partial^+ B_1;\mathbb{R}^{d},y^ad{\bf x})$ be such that $v(\pm 1,0)\in \mathcal{N}$, and $\xi\in \R^d$. 
If 
\begin{equation}\label{smallcondext}
\left(\int_{\partial^+ B_1}y^{a}|\partial_\tau v|^2\,d\mathcal{H}^1\right)\left(\int_{\partial^+ B_1}y^{-a}|v-\xi|^2\,d\mathcal{H}^1+\int_{\partial^+ B_1}y^{-a}d^2_{\mathcal N}(v)\,d\mathcal{H}^1\right)\leq\varepsilon_0\,,
\end{equation}
then there exists $w\in H^1(B_1^+;\mathbb{R}^{d}, ,y^ad{\bf x})$ such that $w(\partial^0B^+_1)\subset \mathcal N$, $w=v$  on $\partial^+ B_1$, and 
\begin{multline*}
\int_{B_1^+}y^{a}|\nabla w|^2\,d{\bf x}\\\leq C\left (\int_{\partial^+ B_1}y^{a}|\partial_\tau v|^2\,d\mathcal{H}^1\right )^{1/2}\left (\int_{\partial^+ B_1}y^{-a}|v-\xi|^2\,d\mathcal{H}^1+
\int_{\partial^+ B_1}y^{-a}d^2_{\mathcal N}(v)\,d\mathcal{H}^1\right)^{1/2}\,.
\end{multline*}
\end{lemma}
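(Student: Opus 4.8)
The goal is to build a competitor $w$ on $B_1^+$ whose trace on the flat part $\partial^0 B_1^+$ lies on $\mathcal N$, which agrees with $v$ on the spherical part $\partial^+ B_1$, and whose weighted Dirichlet energy is controlled by the ``hybrid'' product on the right-hand side. The natural strategy is to extend $v$ inside $B_1^+$ by a (weighted) homogeneous-of-degree-zero extension along rays from the center, up to a scale $\rho$, correct it near the flat boundary so that its trace there sits on $\mathcal N$, and then fill the small ball $B_\rho^+$ with a cheap $\mathcal N$-valued map; finally optimize over $\rho$.

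\textbf{Step 1: the $0$-homogeneous extension.} Define $\widetilde w(r\boldsymbol\omega) := v(\boldsymbol\omega)$ for $\boldsymbol\omega\in\partial^+B_1$ and $r\in(\rho,1]$, i.e. $\widetilde w$ is constant along each ray. A direct computation in polar coordinates (using $d{\bf x}=r\,dr\,d\mathcal H^1$ and $y=r\sin\theta$, so $y^a$ contributes $r^a$ times the boundary weight) gives
\begin{equation*}
\int_{B_1^+\setminus B_\rho^+} y^a|\nabla\widetilde w|^2\,d{\bf x} = \left(\int_\rho^1 r^{a-1}\,dr\right)\int_{\partial^+B_1} y^a|\partial_\tau v|^2\,d\mathcal H^1 = \frac{1-\rho^a}{a}\int_{\partial^+B_1} y^a|\partial_\tau v|^2\,d\mathcal H^1\,.
\end{equation*}
So the energy of $\widetilde w$ on the annular region is bounded, uniformly in $\rho$, by $C_s\int_{\partial^+B_1}y^a|\partial_\tau v|^2\,d\mathcal H^1=:C_sA$. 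The issue is that the trace of $\widetilde w$ on $\partial^0B_1^+\setminus(-\rho,\rho)$ equals $v(\pm 1,0)\in\mathcal N$ only at the two endpoints; elsewhere it is $v(\pm|x|/|x|,0)$... wait, actually since $\widetilde w$ is $0$-homogeneous its trace on $\partial^0 B_1^+$ is just $v(1,0)$ for $x>0$ and $v(-1,0)$ for $x<0$, both on $\mathcal N$. \emph{Hence no boundary correction is needed on the part $|x|\in(\rho,1)$}: the $0$-homogeneous extension already has $\mathcal N$-valued trace there, since the trace only sees the two boundary points $(\pm1,0)$.

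\textbf{Step 2: filling $B_\rho^+$.} On $\partial^+B_\rho$ the map $\widetilde w$ equals $v(\cdot/\rho)$; its trace at $(\pm\rho,0)$ is $v(\pm1,0)\in\mathcal N$. By Lemma \ref{firstesti} and the smallness hypothesis \eqref{smallcondext} (with $\varepsilon_0$ chosen smaller than the square of the tubular-neighborhood width of $\mathcal N$), $\widetilde w$ takes values in a fixed tubular neighborhood $\mathcal O$ of $\mathcal N$ on which the nearest-point projection $\Pi:\mathcal O\to\mathcal N$ is smooth. Inside $B_\rho^+$ I define $w$ to be the $0$-homogeneous extension of $\widetilde w|_{\partial^+B_\rho}$ composed with $\Pi$, i.e. $w(r\boldsymbol\omega):=\Pi\big(v(\boldsymbol\omega)\big)$ for $r\le\rho$ --- this is $\mathcal N$-valued everywhere, in particular on $\partial^0B_\rho^+\subset\partial^0 B_1^+$, and it glues continuously to the outer piece along $\partial^+B_\rho$ after also post-composing the outer piece... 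Actually cleaner: replace $\widetilde w$ \emph{globally} on $B_1^+$ by $w:=\Pi\circ\widetilde w$ on $B_\rho^+$ and $w:=\widetilde w$ on the annulus, noting $\widetilde w=v\notin\mathcal N$ in general on $\partial^+B_1$, so $\Pi$ must only be applied inside. Since $|\nabla\Pi|$ is bounded on $\mathcal O$ and $\Pi$ is the identity on $\mathcal N$, and since $\widetilde w$ on $B_\rho^+$ is again $0$-homogeneous, the energy of $w$ on $B_\rho^+$ is
\begin{equation*}
\int_{B_\rho^+} y^a|\nabla w|^2\,d{\bf x}\le \|\nabla\Pi\|_\infty^2\int_{B_\rho^+}y^a|\nabla\widetilde w|^2\,d{\bf x} \le C_{\mathcal N}\,\frac{\rho^a}{a}\,A\,,
\end{equation*}
which is \emph{worse}, not better, as $\rho\to0$ only trivially. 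The point of shrinking $\rho$ must instead be to gain from the \emph{other} factor.

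\textbf{Revised Step 2 (the real mechanism): correcting the trace on the annulus, gaining from $B_\rho^+$.} In fact the standard argument (as in \cite{HL,HL2}, and adapted to the weight in \cite{DS}) is the opposite balance: on the annulus $B_1^+\setminus B_\rho^+$ one does \emph{not} take the constant $0$-homogeneous extension but a \emph{logarithmic cutoff} interpolation between $v$ on $\partial^+B_1$ and the constant $\xi$ (or a point of $\mathcal N$) at scale $\rho$, paying $\big(\log(1/\rho)\big)^{-1}$-type weights against the $L^2$-oscillation factor $\int y^{-a}(|v-\xi|^2+d_{\mathcal N}^2(v))$; then $B_\rho^+$ is filled with a constant in $\mathcal N$ at zero energy. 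Concretely, set $B:=\int_{\partial^+B_1}y^{-a}|v-\xi|^2 + \int_{\partial^+B_1}y^{-a}d_{\mathcal N}^2(v)$ and define $w$ on the annulus by $w(r\boldsymbol\omega):=\eta(r)\,v(\boldsymbol\omega) + (1-\eta(r))\,p_0$ where $p_0=\Pi(\xi)\in\mathcal N$ and $\eta$ is an appropriate weighted-harmonic cutoff with $\eta(1)=1$, $\eta(\rho)=0$, chosen to minimize $\int_\rho^1 r^{a-1}|\eta'|^2 r^2\,dr$-type quantities --- actually since $a\in(0,1)$, the relevant one-dimensional weight $r^{a-1}\cdot r^2 = r^{a+1}$ is not logarithmic, so the correct cutoff is the power $\eta(r)=(r^{?}-\rho^{?})/(1-\rho^{?})$ and the radial energy of the $(1-\eta)$-part scales like $\rho^{?}\cdot B/\rho^{2}$ balanced against the $\eta$-part $A$; optimizing $\rho$ in $\rho^{a}A + \rho^{-a}B$ (or the correct powers) yields the bound $C(AB)^{1/2}$. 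On $B_\rho^+$ we put $w\equiv p_0\in\mathcal N$, contributing zero; the trace of $w$ on $\partial^0B_1^+$ is $p_0\in\mathcal N$ throughout (the endpoint constraint is used only to ensure $d_{\mathcal N}(v(\pm1,0))=0$ so the interpolant's boundary trace stays near $\mathcal N$ --- here we do not even need it on $\partial^0$, only that $w=v$ matches on $\partial^+B_1$).

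\textbf{Step 3: assembling the estimate and optimizing.} Collecting, $\int_{B_1^+}y^a|\nabla w|^2 \le C_1 \phi(\rho) A + C_2 \psi(\rho) B$ with $\phi,\psi$ explicit powers of $\rho$ of opposite sign (the precise exponents coming from the $r^{a\pm1}$ radial weights). Choosing $\rho$ to equalize the two terms --- $\rho$ a suitable power of $(B/A)$, legitimate since \eqref{smallcondext} guarantees $AB\le\varepsilon_0$ so $\rho\le1$ --- produces $\int_{B_1^+}y^a|\nabla w|^2\le C(\mathcal N,s)(AB)^{1/2}$, which is exactly the claimed inequality with $A=\int_{\partial^+B_1}y^a|\partial_\tau v|^2$ and $B = \int_{\partial^+B_1}y^{-a}|v-\xi|^2 + \int_{\partial^+B_1}y^{-a}d_{\mathcal N}^2(v)$. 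One must also check $w\in H^1(B_1^+;y^ad{\bf x})$ and that the pieces glue in $H^1$ across $\partial^+B_\rho$ (they do, being continuous with matching traces and finite energy on each side), and that $w=v$ holds on all of $\partial^+B_1$ (true by $\eta(1)=1$).

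\textbf{Main obstacle.} The delicate point is getting the weighted radial computations right: with the Muckenhoupt weight $y^a$, $a=1-2s\in(0,1)$, the one-dimensional reductions involve weights $r^{a-1}$, $r^{a+1}$, $r^{a-1}$ near the flat boundary versus the interior, and the ``right'' cutoff profile $\eta$ and the ``right'' power of $\rho$ to optimize are dictated by these --- in the unweighted case ($a=0$, $s=1/2$) the mechanism is genuinely logarithmic, whereas here it is a clean power law, and one must make sure the constants $C(\mathcal N,s)$ stay finite as $s\to1/2^-$ (they will, since $1/a\to1$ is harmless but the power exponents degenerate gracefully). Handling the trace on $\partial^+B_\rho$ carefully --- in particular that the restriction of $v$ to $\partial^+B_\rho$ under rescaling has the Fubini-type slicing property needed to run Lemma \ref{firstesti} / Cauchy--Schwarz on arcs --- and ensuring the tubular-neighborhood projection $\Pi$ is only ever applied where $v\in\mathcal O$ (guaranteed by Lemma \ref{firstesti} plus the choice of $\varepsilon_0$) are the remaining technical checkpoints.
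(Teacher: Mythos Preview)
Your approach is genuinely different from the paper's and, as written, has a real gap.

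The paper does not use a cone or radial-interpolation construction. It reflects $v$ evenly to all of $\partial B_1$ and lets $h$ solve the weighted Dirichlet problem $\mathrm{div}(|y|^a\nabla h)=0$ in $B_1$ with $h=v$ on $\partial B_1$. The product-form energy bound then drops out of integration by parts,
\[
\int_{B_1}|y|^a|\nabla h|^2=\int_{\partial B_1}|y|^a(v-\xi)\cdot\partial_\nu h\le\Big(\int_{\partial B_1}|y|^a|v-\xi|^2\Big)^{1/2}\Big(\int_{\partial B_1}|y|^a|\partial_\nu h|^2\Big)^{1/2},
\]
combined with the inequality $\int_{\partial B_1}|y|^a|\partial_\nu h|^2\le\int_{\partial B_1}|y|^a|\partial_\tau v|^2$ from \cite[Lemma~6.2]{CS}. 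The smallness \eqref{smallcondext} is used only afterward: via the maximum principle applied to $|h-v(\mathbf x_0)|^2$ and Lemma~\ref{firstesti}, it forces $h$ into a tubular neighborhood of $\mathcal N$, and one then sets $w=\zeta\,\pi_{\mathcal N}(h)+(1-\zeta)h$ with a cutoff $\zeta$ equal to $1$ near $\partial^0B_1^+$ and $0$ on $\partial^+B_1$, designed so that $y^a|\nabla\zeta|^2$ is integrable. No optimization over a scale is needed.

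The gap in your argument is the free-boundary constraint. In your Revised Step~2 you set $w(r\boldsymbol\omega)=\eta(r)v(\boldsymbol\omega)+(1-\eta(r))p_0$ and assert that ``the trace of $w$ on $\partial^0B_1^+$ is $p_0\in\mathcal N$ throughout''. That is false on the annular part $\rho<|x|<1$: there the trace is the chord $\eta(|x|)\,v(\pm1,0)+(1-\eta(|x|))\,p_0$, which generically does not lie on $\mathcal N$. (Your original Step~1 \emph{did} satisfy the constraint, precisely because the $0$-homogeneous extension has constant trace $v(\pm1,0)$; you lose this the moment you interpolate toward $p_0$.) Two further problems compound this: first, $p_0=\Pi(\xi)$ need not be defined, since the hypothesis bounds only the product $AB$ and $\xi$ can sit arbitrarily far from $\mathcal N$; second, your justification ``$AB\le\varepsilon_0$ so $\rho\le1$'' is a non sequitur, because the optimal scale is a power of the \emph{ratio} $B/A$, which is unconstrained. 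The paper's harmonic-extension route sidesteps all three issues at once: the energy bound needs no scale optimization, the maximum principle (not any property of $\xi$) places $h$ near $\mathcal N$, and the boundary correction is done by a cutoff transverse to $\partial^0B_1^+$ rather than radially from the origin.
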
 

\begin{proof}
Reflect $v$ evenly to the entire sphere $\partial B_1$. Obviously, $v\in H^1(\partial B_1;\mathbb{R}^{d},|y|^ad{\bf x})$ by symmetry.  
We consider the variational solution $h\in H^1(B_1;\mathbb{R}^{d},|y|^ad{\bf x})$ of 
 \begin{equation}\label{yharmeqB}
 \begin{cases} 
{\rm  div}(|y|^{a}\nabla h)=0 &\text{in $B_1$}\,,\\
h=v &\text{on $\partial B_1$}\,. 
 \end{cases}
 \end{equation}
 Note that $h\in L^\infty(B_1)$. Indeed, since $v$ is absolutely continuous, it is bounded. Since $h$ minimizes ${\bf E}_s(\cdot,B_1)$ over all maps equal to $v$ on $\partial B_1$, 
 a classical truncation argument shows that $|h|$ does not exceed $\|v\|_{L^\infty}(\partial B_1)$. 
 \vskip3pt
 
Next, recalling \cite[Lemma 6.2]{CS}, we have 
 \begin{equation}\label{estL2normderiv}
 \int_{\partial B_1}|y|^{a}|\partial_\nu h|^2\,d\mathcal{H}^1\leq \int_{\partial B_1}|y|^{a}|\partial_\tau v|^2\,d\mathcal{H}^1\,.
 \end{equation}
Using  \eqref{yharmeqB} and \eqref{estL2normderiv}, we infer from the divergence theorem and Cauchy-Schwarz inequality  that 
\begin{align*}
\int_{B_1}|y|^{a}|\nabla h|^2\,d{\bf x}&=\int_{B_1}|y|^{a}|\nabla (h-\xi)|^2\,d{\bf x}\\
&=\int_{\partial B_1}|y|^{a}(h-\xi)\cdot \partial_\nu h\,d\mathcal{H}^1\\
&\leq \left(\int_{\partial B_1}|y|^{a}|v-\xi|^2\,d\mathcal{H}^1\right)^{1/2}\left(\int_{\partial B_1}|y|^{a}|\partial_\tau v|^2\,d\mathcal{H}^1\right)^{1/2}\,.
\end{align*}
Hence, by symmetry, 
\begin{equation}\label{contrenergh}
\int_{B_1}|y|^{a}|\nabla h|^2\,d{\bf x}\leq 2\left(\int_{\partial^+ B_1}y^{a}|v-\xi|^2\,d\mathcal{H}^1\right)^{1/2}\left(\int_{\partial^+ B_1}y^{a}|\partial_\tau v|^2\,d\mathcal{H}^1\right)^{1/2}\,.
\end{equation}
By the fundamental theorem of calculus (and symmetry), we have   
\begin{align*}
|v({\bf x})-v({\bf x}_0)|^2&\leq 2 \int_{({\bf x}_0,{\bf x}_1)}|v-v({\bf x}_0)||\partial_\tau v|\,d\mathcal{H}^1\\
&\leq 2\left(\int_{\partial^+ B_1}y^{-a}|v-v({\bf x}_0)|^2\,d\mathcal{H}^1\right)^{1/2}\left(\int_{\partial^+ B_1}y^{a}|\partial_\tau v|^2\,d\mathcal{H}^1\right)^{1/2}
\end{align*}
for every ${\bf x},{\bf x}_0\in\partial B_1$. 

We choose the point ${\bf x}_0$ in such a way that ${\bf x}\mapsto |v({\bf x})-\xi|$ achieves its minimum at ${\bf x}_0$. Then,  
$$|v-v({\bf x}_0)|^2\leq 2|v-\xi|^2+2|v({\bf x_0})-\xi|^2\leq 4|v-\xi|^2\quad\text{on $\partial B_1$}\,.$$
Consequently, 
\begin{equation}\label{contrubd}
|v({\bf x})-v({\bf x}_0)|^2\leq 4\left(\int_{\partial^+ B_1}y^{-a}|v-\xi|^2\,d\mathcal{H}^1\right)^{1/2}\left(\int_{\partial^+ B_1}y^{a}|\partial_\tau v|^2\,d\mathcal{H}^1\right)^{1/2}
\end{equation}
for every ${\bf x}\in\partial B_1$.

Since $h$ is bounded, $|h-v({\bf x}_0)|^2$ belongs to $H^1(B_1,|y|^ad{\bf x})$. Moreover, 
$${\rm div}\big(|y|^a\nabla(|h-v({\bf x}_0)|^2)\big)\geq 0 \quad\text{in $B_1$}\,,$$
and  the maximum principle in \cite{FKS}  together with \eqref{contrubd} implies that 
$$|h({\bf x})-v({\bf x}_0)|^2\leq  4\left(\int_{\partial^+ B_1}y^{-a}|v-\xi|^2\,d\mathcal{H}^1\right)^{1/2}\left(\int_{\partial^+ B_1}y^{a}|\partial_\tau v|^2\,d\mathcal{H}^1\right)^{1/2}$$
for every ${\bf x}\in B_1$. Applying Lemma \ref{firstesti} at ${\bf x}={\bf x}_0$, we now infer that 
\begin{align*}
d_{\mathcal N}\big(h({\bf x})\big) &\leq \big|d_{\mathcal N}\big(h({\bf x})\big)-d_{\mathcal N}\big(v({\bf x}_0)\big)\big|+d_{\mathcal N}\big(v({\bf x}_0)\big)\\
&\leq  \big|h({\bf x})-v({\bf x}_0)\big|+d_{\mathcal N}\big(v({\bf x}_0)\big)\\
&\leq 2\left(\int_{\partial^+ B_1}y^{-a}|v-\xi|^2\,d\mathcal{H}^1\right)^{1/4}\left(\int_{\partial^+ B_1}y^{a}|\partial_\tau v|^2\,d\mathcal{H}^1\right)^{1/4}\\
&\quad+\left(\int_{\partial^+ B_1}y^{-a}d^2_{\mathcal N}(v)\,d\mathcal{H}^1\right)^{1/4}\left(\int_{\partial B_1^+}y^{a}|\partial_\tau v|^2\,d\mathcal{H}^1\right)^{1/4}\\
&\leq 2\left(\int_{\partial^+ B_1}y^{a}|\partial_\tau v|^2\,d\mathcal{H}^1\right)^{1/4}\bigg(\int_{\partial^+ B_1}y^{-a}|v-\xi|^2\,d\mathcal{H}^1\\
&\hskip150pt+\int_{\partial^+ B_1}y^{-a}d^2_{\mathcal N}(v)\,d\mathcal{H}^1\bigg)^{1/4}
\end{align*}
for every ${\bf x}\in B_1$. By our assumption, we thus have
$$d_{\mathcal N}(h)\leq 2 \varepsilon_0^{1/4}\quad\text{in $B_1$}\,.$$
As a consequence, if $\varepsilon_0=\varepsilon_0(\mathcal{N})$ is small enough,  $h$ takes values in a small tubular neighborhood of $\mathcal{N}$. In such a neighborhood, the nearest point retraction $\pi_{\mathcal N}$ on $\mathcal N$ is well defined and smooth. Therefore, $\pi_{\mathcal N}(h)$ belongs to $H^1(B_1;\mathcal{N}, |y|^a\,d{\bf x})$, and 
\begin{multline}\label{pinhmoinsh}
\|\pi_{\mathcal{N}}(h)-h\|_{L^\infty(B_1)}^2\leq 4\left(\int_{\partial^+ B_1}y^{a}|\partial_\tau v|^2\,d\mathcal{H}^1\right)^{1/2}\bigg(\int_{\partial^+ B_1}y^{-a}|v-\xi|^2\,d\mathcal{H}^1\\
+\int_{\partial^+ B_1}y^{-a}d^2_{\mathcal N}(v)\,d\mathcal{H}^1\bigg)^{1/2}\,.
\end{multline}
We shall now construct the extension $w$ interpolating $h$ and $\pi_{\mathcal{N}}(h)$ near $\partial^+B_1$. We proceed as follows.  Consider the set 
$$A:=\Big\{{\bf x}=(x,y)\in\overline B_1^+: 0\leq y\leq 1/2\,,\;|x|\leq \sqrt{1-y^2} - y^{\frac{a}{2}}\Big\}\,, $$
and let $\zeta\in C^\infty(B_1^+;[0,1])$ be a smooth cut-off function satisfying $\zeta=1$ in $A\cap B_1^+$, and $\zeta=0$ on $\partial^+B_1$.  From the very definition of $A$, we can even find $\zeta$ in such a way that 
\begin{equation}\label{contrcutoff}
|\partial_y\zeta(x,y)|\leq C \quad\text{and}\quad |\partial_x\zeta(x,y)|\leq Cy^{\frac{-a}{2}} \,,
\end{equation}
where $C=C(s)$. In particular, $\zeta\in H^1(B_1^+;[0,1], y^ad{\bf x})$.  We finally define 
$$w:=\zeta\big(\pi_{\mathcal{N}}(h)-h\big)+h\in H^1(B_1^+;\R^d,y^ad{\bf x})\,.$$
By construction, $w({\bf x})\in\mathcal{N}$ for ${\bf x}\in\partial^0B_1^+$, and $w=h=v$ on $\partial^+B_1$. Then we estimate
\begin{align}
\nonumber \int_{B^+_1}y^{a}|\nabla w|^2\,d{\bf x}& \leq 2\int_{B^+_1}y^{a}|\nabla h|^2\,d{\bf x}+4\int_{B^+_1}y^{a}\big|\nabla(\pi_{\mathcal{N}}(h)-h)\big|^2\,d{\bf x}\\
\nonumber&\hskip120pt +4\int_{B^+_1}y^{a}|\nabla \zeta|^2|\pi_\mathcal{N}(h)-h|^2\,d{\bf x}\\
\label{finestext} & \leq C\int_{B^+_1}y^{a}|\nabla h|^2\,d{\bf x}+4\|\pi_\mathcal{N}(h)-h\|_{L^\infty(B_1^+)}^2\int_{B^+_1}y^{a}|\nabla \zeta|^2\,d{\bf x}\,.
 \end{align}
 Gathering \eqref{finestext} with \eqref{contrenergh}, \eqref{pinhmoinsh}, and \eqref{contrcutoff} leads to the announced result. 
\end{proof}

\begin{corollary}[\bf Hybrid inequality]\label{hybrid}
There exists two constants $\varepsilon_1=\varepsilon_1(\mathcal{N})>0$ and $C=C(\mathcal{N},s)$ such that the following holds. 
Let $v\in H^1(B_1^+;\R^d,y^ad{\bf x})$ be a minimizing weighted harmonic map with free boundary in $B_1^+$, and $\xi\in \R^d$. If 
$$\left(\int_{B_1^+}y^{a}|\nabla v|^2\,d{\bf x}\right)\left(\int_{B_1^+}y^{-a}|v-\xi|^2\,d{\bf x}+\int_{B_1^+}y^{-a}d^2_{\mathcal N}(v)\,d{\bf x}\right)\leq \varepsilon_1\,,$$ 
then
\begin{multline*}
\int_{B_{1/2}^+}y^{a}|\nabla v|^2\,d{\bf x}\leq \lambda\int_{B_1^+}y^{a}|\nabla v|^2\,d{\bf x}\\
+\frac{C}{\lambda}\left(\int_{B_1^+}y^{-a}|v-\xi|^2\,d{\bf x}
+\int_{B_1^+}y^{-a}d^2_{\mathcal N}(v)\,d{\bf x}\right)
\end{multline*}
for every $\lambda\in(0,1)$. 
\end{corollary}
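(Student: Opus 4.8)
The strategy is to slice $B_1^+$ into spheres $\partial^+B_\rho$, find a good radius $\rho\in(1/2,1)$ on which the boundary data $v|_{\partial^+B_\rho}$ is small in the sense required by Lemma \ref{lemext}, then build a comparison map on $B_\rho^+$ that coincides with $v$ outside $B_\rho^+$, and conclude by minimality. First I would introduce polar coordinates centered at the origin and, for $\rho\in(1/2,1)$, consider the three quantities $\int_{\partial^+B_\rho}y^a|\partial_\tau v|^2\,d\mathcal H^1$, $\int_{\partial^+B_\rho}y^{-a}|v-\xi|^2\,d\mathcal H^1$, and $\int_{\partial^+B_\rho}y^{-a}d^2_{\mathcal N}(v)\,d\mathcal H^1$. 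By the coarea formula the tangential-gradient integral is controlled by $\int_{B_1^+}y^a|\nabla v|^2\,d{\bf x}$ after integration in $\rho$, while a scaled trace/extension estimate (of the type in \cite[Section 2]{MSW}, essentially Lemma \ref{H1tofracesti} rescaled) controls $\int_{1/2}^1\big(\int_{\partial^+B_\rho}y^{-a}|v-\xi|^2\big)\,d\rho$ by $\int_{B_1^+}y^{-a}|v-\xi|^2\,d{\bf x}$ plus lower-order terms, and similarly for the $d^2_{\mathcal N}(v)$ term. An averaging (Chebyshev / Fubini) argument then yields a radius $\rho_*\in(1/2,1)$ on which all three spherical integrals are bounded by a fixed multiple of the corresponding solid integrals; here the smallness hypothesis with $\veps_1$ chosen $\leq c\,\veps_0$ guarantees that the product condition \eqref{smallcondext} holds on $\partial^+B_{\rho_*}$ (after the harmless rescaling from $B_{\rho_*}$ to $B_1$), and also that $v(\pm\rho_*,0)\in\mathcal N$ holds on $\partial^0 B_{\rho_*}\subset\partial^0 B_1^+$ where $v$ takes values in $\mathcal N$ a.e.

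Next I would apply Lemma \ref{lemext} (rescaled to $B_{\rho_*}$) to obtain $w\in H^1(B_{\rho_*}^+;\R^d,y^ad{\bf x})$ with $w=v$ on $\partial^+B_{\rho_*}$, $w(\partial^0 B_{\rho_*}^+)\subset\mathcal N$, and
$$\int_{B_{\rho_*}^+}y^a|\nabla w|^2\,d{\bf x}\leq C\Big(\int_{\partial^+B_{\rho_*}}y^a|\partial_\tau v|^2\Big)^{1/2}\Big(\int_{\partial^+B_{\rho_*}}y^{-a}|v-\xi|^2+\int_{\partial^+B_{\rho_*}}y^{-a}d^2_{\mathcal N}(v)\Big)^{1/2}.$$
Extending $w$ by $v$ on $B_1^+\setminus B_{\rho_*}^+$ gives an admissible competitor, so minimality of $v$ in $B_1^+$ gives $\int_{B_{\rho_*}^+}y^a|\nabla v|^2\leq\int_{B_{\rho_*}^+}y^a|\nabla w|^2$, and since $B_{1/2}^+\subset B_{\rho_*}^+$ the left-hand side dominates $\int_{B_{1/2}^+}y^a|\nabla v|^2$. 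Now I apply Young's inequality $ab\leq \lambda a^2+\tfrac{1}{4\lambda}b^2$ to the product on the right, with $a^2$ the tangential-gradient spherical integral and $b^2$ the sum of the other two, and then bound the spherical integrals by the solid ones using the radius selection from the first step. This produces exactly $\lambda\int_{B_1^+}y^a|\nabla v|^2+\tfrac{C}{\lambda}\big(\int_{B_1^+}y^{-a}|v-\xi|^2+\int_{B_1^+}y^{-a}d^2_{\mathcal N}(v)\big)$, which is the claimed bound; the constant $\veps_1$ is taken small enough (relative to $\veps_0$ and the implicit constants in the slicing estimates) for all the above to be legitimate.

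The main obstacle I anticipate is the \emph{slicing/averaging step with the correct weights}: one must be careful that the degenerate weight $y^a$ appearing in the tangential energy and the dual weight $y^{-a}$ appearing in the $L^2$ and distance terms interact correctly under restriction to spheres $\partial^+B_\rho$ and under the rescaling to $B_1$, so that the product structure in \eqref{smallcondext} is preserved with scale-invariant constants. In particular the estimate $\int_{1/2}^1\big(\int_{\partial^+B_\rho}y^{-a}|v-\xi|^2\,d\mathcal H^1\big)\,d\rho\leq C\int_{B_1^+}y^{-a}|v-\xi|^2\,d{\bf x}$ is not quite the coarea formula (the arclength on $\partial^+B_\rho$ versus the area element differ by a bounded factor on $1/2\leq\rho\leq1$, which is fine), but one should double-check that no spurious negative power of $\rho$ survives; this is routine but is where the bookkeeping is delicate. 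A secondary point is ensuring the traces $v(\pm\rho_*,0)$ make sense and lie in $\mathcal N$; this follows because for a.e. $\rho_*$ the trace of $v$ on the segment $\partial^0B_{\rho_*}$ is well defined and, being the trace of a fixed $H^1(B_1^+;y^a)$ map that takes values in $\mathcal N$ a.e. on $\partial^0B_1^+$, lies in $\mathcal N$ at a.e. point, so one selects $\rho_*$ in this full-measure set as well.
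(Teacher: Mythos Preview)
Your proposal is correct and follows essentially the same route as the paper's proof: select a good radius $\rho_*\in(1/2,1)$ by Fubini/averaging so that the three spherical integrals are controlled by fixed multiples of the corresponding solid integrals, rescale and apply Lemma~\ref{lemext} to build a competitor agreeing with $v$ on $\partial^+B_{\rho_*}$, use minimality, and finish with Young's inequality. The only slight overcomplication is your mention of a trace/extension estimate \`a la Lemma~\ref{H1tofracesti} for the $y^{-a}$ spherical integrals: the paper simply uses the coarea identity $\int_{1/2}^1\!\int_{\partial^+B_\rho}y^{-a}|v-\xi|^2\,d\mathcal H^1\,d\rho=\int_{B_1^+\setminus B_{1/2}^+}y^{-a}|v-\xi|^2\,d{\bf x}$, which immediately gives a radius on which each spherical integral is at most a universal constant times the solid one (the paper takes the constant $12$); no extra trace machinery is needed, and the weight bookkeeping you flag as delicate is in fact trivial here.
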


\begin{proof}
By a classical averaging argument, we can find $r\in(1/2,1)$ such that $v$ restricted to $\partial^+B_r$ belongs to $H^1(\partial^+B_r;\R^d,y^a\,d{\bf x})$, and 
\begin{align*}
\int_{\partial^+ B_r}y^{-a}|v-\xi|^2\,d\mathcal{H}^1& \leq 12\int_{B_1^+}y^{-a}|u-\xi|^2\,d{\bf x}\,,\\ 
\int_{\partial^+ B_r}y^{-a}d^2_{\mathcal N}(v)\,d\mathcal{H}^1& \leq 12\int_{B_1^+}y^{-a}d^2_{\mathcal N}(v)\,d{\bf x}\,,\\
\int_{\partial^+ B_r}y^{a}|\partial_\tau v|^2\,d\mathcal{H}^1&\leq 12 \int_{B_1^+}y^{a}|\nabla v|^2d{\bf x}\,.
\end{align*}
Setting $v_r({\bf x}):=v(r{\bf x})$ for ${\bf x}\in\partial^+B_1$, we deduce by scaling that $v_r$ satisfies  \eqref{smallcondext} for $\varepsilon_1$ small enough. 
Denote by $w_r$ the extension of $v_r$ provided by Lemma \ref{lemext}, and set $w({\bf x}):=w_r({\bf x}/r)$ for ${\bf x}\in B_r^+$. Scaling back, we discover that 
\begin{multline*}
\int_{B_r^+}y^{a}|\nabla w|^2\,d{\bf x}\\\leq C\left (\int_{\partial^+ B_r}y^{a}|\partial_\tau v|^2\,d\mathcal{H}^1\right )^{1/2}\left (\int_{\partial^+ B_r}y^{-a}|v-\xi|^2\,d\mathcal{H}^1+
\int_{\partial^+ B_r}y^{-a}d^2_{\mathcal N}(v)\,d\mathcal{H}^1\right)^{1/2}\,.
\end{multline*}
Since $w=v$ on $\partial^+B_r$, and $w({\bf x})\in \mathcal{N}$ on $\partial^0B^+_r$, we may extend $w$ by $v$ in $B_1^+\setminus B_r^+$ to produce a competitor to minimality, that we still denote by $w$. 
Hence, we have ${\bf E}_s(v,B_1^+)\leq {\bf E}_s(w,B_1^+)$, which leads to 
\begin{align*}
\int_{B_{1/2}^+}y^{a}|\nabla v|^2\,d{\bf x}&\leq \int_{B_{r}^+}y^{a}|\nabla v|^2\,d{\bf x}\\
&\leq \int_{B_{r}^+}y^{a}|\nabla w|^2\,d{\bf x}\\
&\leq \frac{\lambda}{12} \int_{\partial^+ B_r}y^{a}|\partial_\tau v|^2\,d\mathcal{H}^1\\
&\hskip30pt +\frac{3}{\lambda}\left(\int_{\partial^+ B_r}y^{-a}|v-\xi|^2\,d\mathcal{H}^1+\int_{\partial^+ B_r}y^{-a}d^2_{\mathcal N}(v)\,d\mathcal{H}^1\right)\\
&\leq  \lambda\int_{B_1^+}y^{a}|\nabla v|^2\,d{\bf x}\\
&\hskip30pt +\frac{36}{\lambda}\left(\int_{B_1^+}y^{-a}|v-\xi|^2\,d{\bf x}+\int_{B_1^+}y^{-a}d^2_{\mathcal N}(v)\,d{\bf x}\right) 
\end{align*}
for every $\lambda\in(0,1)$. 
\end{proof}

\subsection{Small energy regularity} We shall now prove the aforementioned small energy regularity property. As usual, 
the cornerstone argument is an energy improvement under a small oscillation condition. This leads to an improved energy decay, which in turn implies H\"older continuity as in the classical Morrey's lemma.  

\begin{theorem}[\bf Energy improvement]\label{thmdecay}
There exist constants $r_0=r_0(s,\mathcal{N})\in (0,1/2)$ and $\varepsilon_2=\varepsilon_2(s,\mathcal{N})>0$ such that the following holds. 
If $v\in H^1(B_1^+;\R^d,y^ad{\bf x})$ is a minimizing weighted harmonic map in $B_1^+$ satisfying  ${\bf E}_s(v,B_1^+)\leq \varepsilon^2_2$, 
then 
$$\frac{1}{r_0^{1-2s}}{\bf E}_s(v,B^+_{r_0})<\frac{1}{2} {\bf E}_s(v,B_1^+)\,.$$
\end{theorem}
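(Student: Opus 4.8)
The plan is to argue by compactness/contradiction, which is the standard route for this kind of small-energy decay estimate (as in \cite{HL,HL2,DS}). Suppose the statement fails. Then for $r_0$ to be chosen later, there is a sequence $v_k\in H^1(B_1^+;\R^d,y^ad{\bf x})$ of minimizing weighted harmonic maps with free boundary such that $\eps_k^2:={\bf E}_s(v_k,B_1^+)\to 0$ but
$$\frac{1}{r_0^{1-2s}}{\bf E}_s(v_k,B^+_{r_0})\geq \frac12{\bf E}_s(v_k,B_1^+)=\frac12\eps_k^2\,.$$
The natural object is the blow-up (normalized) sequence $w_k:=(v_k-\xi_k)/\eps_k$, where $\xi_k\in\mathcal N$ is a well-chosen point (say the average of $v_k$ on $\partial^0B_1^+$, or a value of $v_k$ there). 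By construction ${\bf E}_s(w_k,B_1^+)=1$, and after passing to a subsequence one gets, via the weighted Poincaré inequality and the compactness of the weighted trace operator, that $w_k\rightharpoonup w$ weakly in $H^1(B_{\rho}^+;\R^d,y^ad{\bf x})$ for every $\rho<1$, with strong $L^2(y^ad{\bf x})$ and trace convergence. Since $d_{\mathcal N}(v_k)\to 0$ uniformly on compact subsets of $\partial^0B_1^+$ (this needs $\|v_k-\xi_k\|_{L^\infty}\to 0$, which follows from the $L^\infty$ bound for weighted harmonic maps together with the smallness of the oscillation — one may first truncate/argue on a slightly smaller ball), the limit $w$ takes values, on $\partial^0B_1^+$, in the tangent space $\rmTan(\xi_\infty,\mathcal N)$ where $\xi_k\to\xi_\infty\in\mathcal N$; and $w$ is a weighted-harmonic function in $B_1^+$ satisfying the \emph{linear} Neumann-type free boundary condition $y^a\partial_y w\perp \rmTan(\xi_\infty,\mathcal N)$, i.e. the tangential part of $w$ is the even (Neumann) weighted-harmonic extension and the normal part the odd one. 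For such a linear limit problem one has the clean decay estimate
$$\frac{1}{\rho^{1-2s}}{\bf E}_s(w,B^+_\rho)\leq C_\star\,\rho^{2\alpha}\,{\bf E}_s(w,B^+_1)$$
for some $\alpha>0$ and $C_\star=C_\star(s)$ (decompose into tangential/normal components, use the known regularity/Almgren-type estimates for the degenerate operator $\rmdiv(y^a\nabla\cdot)$ with even or odd data; the homogeneity of the lowest nontrivial solution gives the gain).

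The remaining point is to upgrade weak convergence to convergence of the energies on a fixed smaller ball, so as to pass the reverse inequality to the limit and reach a contradiction. For this I would establish strong $H^1_{\rm loc}(y^ad{\bf x})$ convergence $w_k\to w$ on $B_{r_0}^+$ by the usual variational comparison argument: build a competitor for $v_k$ on some good radius $r\in(r_0,2r_0)$ (chosen by averaging so the slice energies are controlled) that equals $v_k$ near $\partial^+B_r$ and equals, inside, $\xi_k+\eps_k(\text{extension of }w\text{ adjusted to }\mathcal N)$ — this is exactly where Lemma~\ref{lemext} (comparison maps) enters, producing an $\mathcal N$-valued competitor with almost the energy of the harmonic extension; minimality of $v_k$ then forces $\limsup_k {\bf E}_s(w_k,B_r^+)\leq {\bf E}_s(w,B_r^+)+o(1)$, which combined with weak lower semicontinuity yields strong convergence and $\|\nabla w_k\|^2_{L^2(y^a,B_{r_0}^+)}\to\|\nabla w\|^2_{L^2(y^a,B_{r_0}^+)}$. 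Passing to the limit in the contradiction hypothesis gives $\frac{1}{r_0^{1-2s}}{\bf E}_s(w,B_{r_0}^+)\geq \frac12$, while the linear decay gives $\frac{1}{r_0^{1-2s}}{\bf E}_s(w,B_{r_0}^+)\leq C_\star r_0^{2\alpha}$. Choosing $r_0$ small enough that $C_\star r_0^{2\alpha}<\frac12$ is the desired contradiction.

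\textbf{Main obstacle.} The delicate step is the strong $H^1(y^ad{\bf x})$ convergence on $B_{r_0}^+$: one must produce, for the maps $v_k$, admissible $\mathcal N$-valued competitors that are genuinely close in energy to the (unconstrained) weighted-harmonic extension of the limit $w$, despite the manifold constraint on $\partial^0$ and the degeneracy of the weight $y^a$. This is handled by the hybrid inequality / comparison-map machinery of Lemma~\ref{lemext} and Corollary~\ref{hybrid}: the smallness of ${\bf E}_s(v_k,B_1^+)$ (hence of $\|v_k-\xi_k\|_{L^\infty}$ and of $d_{\mathcal N}(v_k)$) is precisely what lets the projection $\pi_{\mathcal N}$ and the cutoff interpolation be carried out with controlled cost, so the competitor energy is $\le {\bf E}_s(\text{harmonic extension})+C\eps_k\cdot o(1)$. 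A secondary technical point is the uniform $L^\infty$ smallness of $v_k-\xi_k$ near $\partial^0 B_1^+$, which I would get from the maximum principle of \cite{FKS} applied as in the proof of Lemma~\ref{lemext}, after first restricting to a fixed slightly smaller ball on which the averaged slice energies are small.
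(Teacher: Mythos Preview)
Your overall scheme --- contradiction, blow-up $w_k=(v_k-\xi_k)/\eps_k$, weak limit $w$ solving the linearized free-boundary problem, decomposition of $w$ into tangential and normal parts, and H\"older regularity of both via even/odd reflection and \cite{FKS} --- matches the paper. Where you diverge is in the passage back from the linear limit to the original sequence.

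You propose to prove \emph{strong} $H^1(y^a d{\bf x})$-convergence $w_k\to w$ on $B_{r_0}^+$ (via a Luckhaus-type competitor), then pass the failure inequality $\frac{1}{r_0^{1-2s}}{\bf E}_s(w_k,B_{r_0}^+)\ge\frac12$ to the limit and contradict the linear decay for $w$. The paper avoids strong $H^1$-convergence altogether: it applies the hybrid inequality (Corollary~\ref{hybrid}) directly to $v_n$ at scale $2r_0$ with $\lambda=1/8$. This bounds $r_0^{2s-1}{\bf E}_s(v_n,B_{r_0}^+)$ by $\tfrac18\mathbf{\Theta}_{v_n}(0,2r_0)\le\tfrac{\eps_n^2}{8}$ plus $C(2r_0)^{-1-2s}$ times the two $L^2(y^{-a})$-quantities $\int_{B_{2r_0}^+}y^{-a}|v_n-(v_n)_{2r_0}|^2$ and $\int_{B_{2r_0}^+}y^{-a}d_{\mathcal N}^2(v_n)$. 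After dividing by $\eps_n^2$, these converge (by the \emph{compact} embedding $H^1(y^a)\hookrightarrow L^2(y^{-a})$) to $\int y^{-a}|w-(w)_{2r_0}|^2$ and $\int y^{-a}|w^\perp-q|^2$, which are $\le Cr_0^{1+2s+2\alpha}$ by the H\"older regularity of $w^T,w^\perp$. So the paper converts the energy term into $L^2(y^{-a})$-quantities, where compactness is automatic, rather than proving energy convergence.

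Your route is viable in principle, but the obstacle you flag is real and your proposed fix is not quite right: the competitor you sketch requires a Luckhaus-type interpolation on an annulus between $v_k$ and $\xi_k+\eps_k w$ (with the $\mathcal N$-constraint on $\partial^0$ and the weight $y^a$), which is \emph{not} what Lemma~\ref{lemext} provides --- that lemma extends a single boundary datum from $\partial^+B_1$, it does not glue two nearby maps across a thin shell. You would need an additional interpolation lemma in the weighted free-boundary setting. The paper's use of Corollary~\ref{hybrid} is precisely the device that bypasses this, trading strong $H^1$-convergence for an $L^2(y^{-a})$-estimate where compactness comes for free.
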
 

Let us start with the following elementary lemma inspired from \cite[Lemma 3.3]{DS}. 

\begin{lemma}\label{lemmprelepsreg}
Let $v\in H^1(B_1^+;\R^d,y^ad{\bf x})$ be such that $v({\bf x})\in \mathcal{N}$ for a.e. ${\bf x}\in \partial^0B^+_1$. Setting 
$$\bar v:= \frac{2}{\pi}\int_{B_1^+}v\,d{\bf x}\,,$$
one has
$$d_{\mathcal{N}}(\bar v)\leq C\big({\bf E}_s(v,B_1^+)\big)^{1/2} $$
for some constant $C=C(s)$. 
\end{lemma}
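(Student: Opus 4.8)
The plan is to bound the distance from the average $\bar v$ to $\mathcal{N}$ by relating it to the oscillation of $v$, which is controlled by the energy. The key point is that $v$ takes values in $\mathcal{N}$ on $\partial^0 B_1^+$, so $\bar v$ is close to a convex combination of points of $\mathcal{N}$, and its distance to $\mathcal{N}$ is at most how far $v$ deviates from such boundary values on average.

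First I would fix a point $x_* \in \partial^0 B_1^+ \simeq (-1,1)$ (say $x_* = 0$, which lies in $\partial^0 B_1^+$), so that $v(x_*,0)\in \mathcal{N}$, and write
$$
d_{\mathcal{N}}(\bar v) \leq |\bar v - v(x_*,0)| = \left| \frac{2}{\pi}\int_{B_1^+}\big(v({\bf x}) - v(x_*,0)\big)\,d{\bf x}\right| \leq \frac{2}{\pi}\int_{B_1^+}\big|v({\bf x}) - v(x_*,0)\big|\,d{\bf x}\,.
$$
Then I would estimate the right-hand side by a weighted Poincaré-type inequality on $B_1^+$ with the measure $y^a\,d{\bf x}$, comparing $v$ on the interior to its trace value at a boundary point. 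Concretely, one can split $|v({\bf x}) - v(x_*,0)| \leq |v({\bf x}) - \bar v| + |\bar v - v(x_*,0)|$ is circular, so instead I would directly use a trace-Poincaré inequality of the form
$$
\int_{B_1^+} y^{-a}\, |v - v(x_*,0)|^2\, d{\bf x} \ \text{or}\ \int_{B_1^+}|v - v(x_*,0)|\,d{\bf x} \leq C \left(\int_{B_1^+} y^a |\nabla v|^2\, d{\bf x}\right)^{1/2},
$$
the first fractional-Sobolev factor being absorbed since the estimate is scale-free on the fixed domain $B_1^+$. The cleanest route: by the compact trace embedding $H^1(B_1^+;\R^d,y^a d{\bf x}) \hookrightarrow L^2(\partial^0 B_1^+;\R^d)$ and the fractional Sobolev embedding $H^s(\omega_1) \hookrightarrow C^{0,s-1/2}$? — no, $s<1/2$, so that fails. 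Instead I rely on the averaged slicing argument: for a.e. $r\in(1/2,1)$ the restriction $v|_{\partial^+ B_r}$ lies in $H^1(\partial^+ B_r; y^a d\mathcal{H}^1)$ with $\int_{\partial^+ B_r} y^a|\partial_\tau v|^2 \leq C\,{\bf E}_s(v,B_1^+)$, and such arcs meet $\partial^0 B_1^+$; then a one-dimensional fundamental-theorem-of-calculus estimate along $\partial^+ B_r$ from the endpoint $(\pm\sqrt{1-\cdot},\cdot)\to$ a point of $\partial^0$ together with Cauchy–Schwarz bounds $|v({\bf x}) - v(\text{endpoint})|$, integrating over $r$ then gives the bound on $\int_{B_1^+}|v - v(x_*,0)|\,d{\bf x}$.

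Assembling: choosing $x_*$ a common limit point of the arcs $\partial^+ B_r$ on $\partial^0 B_1^+$ (e.g.\ $(\pm 1,0)$, which are the endpoints of every such arc and lie in $\overline{\partial^0 B_1^+}$ — actually one should pick an interior point of $\partial^0 B_1^+$ like $0$, reached along a radial-type path from the arc), one obtains
$$
d_{\mathcal{N}}(\bar v) \leq \frac{2}{\pi}\int_{B_1^+}|v - v(x_*,0)|\,d{\bf x} \leq C\big({\bf E}_s(v,B_1^+)\big)^{1/2},
$$
with $C=C(s)$. The main obstacle I anticipate is making the path-connection argument between an interior point and a boundary point $x_*\in\partial^0 B_1^+$ fully rigorous in the weighted setting: one needs a uniform (in the slicing radius $r$) bound for the $L^1$-deviation using only $\int y^a|\partial_\tau v|^2$ and the degenerate weight near $y=0$, which is exactly the type of weighted trace estimate used in Lemma~\ref{lemext}. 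Once that weighted Poincaré/trace inequality is in hand — and it follows from the structure already exploited in \cite{FKS} and \cite{CS} as cited — the rest is Cauchy–Schwarz and the triangle inequality. Since the conclusion only needs the $L^1$-average and not a pointwise bound, the argument is robust even though $s<1/2$ precludes continuity of the trace.
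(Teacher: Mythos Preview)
Your approach has a genuine gap that you yourself flag but do not resolve. The entire argument hinges on comparing $\bar v$ to the value $v(x_*,0)$ at a \emph{single} boundary point $x_*\in\partial^0 B_1^+$. But since $s<1/2$, the trace of $v$ on $\partial^0 B_1^+$ is only an $L^2$ function, so $v(x_*,0)$ has no meaning at a fixed point; the inequality $d_{\mathcal N}(\bar v)\leq |\bar v - v(x_*,0)|$ is not well-posed. Your slicing idea along the arcs $\partial^+ B_r$ does not rescue this, because those arcs terminate at $(\pm r,0)$, which \emph{varies} with $r$ --- so you never reach a common anchor point $x_*$, and the promised ``path-connection argument between an interior point and a boundary point $x_*$'' is not just technically delicate, it is the wrong picture.

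The paper sidesteps this entirely with a much cleaner device: rather than fixing a single point of $\mathcal N$, it uses the \emph{function} $d_{\mathcal N}(v)$, which vanishes on all of $\partial^0 B_1^+$. Pointwise one has $d_{\mathcal N}(\bar v)\leq |v(\mathbf{x})-\bar v| + d_{\mathcal N}(v(\mathbf{x}))$ for every $\mathbf{x}\in B_1^+$; averaging over $B_1^+$ gives
\[
d_{\mathcal N}(\bar v)\leq C\int_{B_1^+}|v-\bar v|\,d\mathbf{x} + C\int_{B_1^+} d_{\mathcal N}(v)\,d\mathbf{x}\,.
\]
The first integral is handled by the weighted Poincar\'e inequality for mean-zero functions; the second by the weighted Poincar\'e inequality for functions with zero trace on $\partial^0 B_1^+$ (since $d_{\mathcal N}$ is $1$-Lipschitz, $|\nabla(d_{\mathcal N}(v))|\leq|\nabla v|$). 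H\"older's inequality then converts both $\int|\nabla v|$ terms into $\big(\mathbf{E}_s(v,B_1^+)\big)^{1/2}$. No slicing, no pointwise trace values, no path arguments --- just two standard Poincar\'e inequalities and the Lipschitz property of $d_{\mathcal N}$.
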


\begin{proof}
Since $d_{\mathcal{N}}$ is $1$-Lipschitz, we have $d_{\mathcal{N}}(\bar v)\leq |v-\bar v|+d_{\mathcal N}(v)$, and $d_{\mathcal{N}}(v)\in H^1(B_1^+,y^ad{\bf x})$ satisfies $d_{\mathcal{N}}(v)=0$ on $\partial^0B_1^+$. Applying Poincar\'e's inequalities, and H\"older's inequality,  
\begin{align*}
d_{\mathcal{N}}(\bar v) & \leq C\int_{B_1^+} |v-\bar v|\,d{\bf x}+C\int_{B_1^+} d_{\mathcal N}(v)\,d{\bf x}\\
&\leq C\int_{B_1^+} |\nabla v|\,d{\bf x}+C\int_{B_1^+} \big|\nabla\big(d_{\mathcal N}(v)\big)\big|\,d{\bf x} \\
&\leq C\big({\bf E}_s(v,B_1^+)\big)^{1/2}
\end{align*}
where we have used again the fact that $d_{\mathcal{N}}$ is $1$-Lipschitz in the last inequality.
\end{proof}

\begin{proof}[Proof of Theorem \ref{thmdecay}]
{\it Step 1.} We argue by contradiction assuming that for a given radius $r_0\in (0,1/2)$ (to be chosen), there is a sequence $\{v_n\}$ in $H^1(B_1^+;\R^d,y^ad{\bf x})$ of minimizing weighted harmonic maps in $B_1^+$ such that  
\begin{equation}\label{vanishenerg}
\varepsilon^2_n:={\bf E}_s(v_n,B_1^+)\to 0 \,,
\end{equation}
and 
\begin{equation}\label{est1425}
\frac{1}{r_0^{1-2s}}{\bf E}_s(v_n,B^+_{r_0})\geq \frac{1}{2} {\bf E}_s(v_n,B_1^+)\,.
\end{equation}
By Lemma \ref{lemmprelepsreg}, we have $d_{\mathcal{N}}(\bar v_n)\leq C\varepsilon_n\to 0$. Hence, for $n$ large enough, there is a unique $p_n\in\mathcal{N}$ such that $d_\mathcal{N}(\bar v_n)=|\bar v_n-p_n|$. Extracting a subsequence, there are $p\in\mathcal{N}$ and $q\in \R^d$ such that 
$$p_n\to p\,,\quad \bar v_n\to p\,, \;\text{and}\quad  \frac{p_n-\bar v_n}{\varepsilon_n}\to q\,.$$ 
Note that $q\in {\rm Nor}(p,\mathcal{N})$ since $p_n-\bar v_n\in {\rm Nor}(p_n,\mathcal{N})$. 
\vskip3pt

By Poincar\'e's inequality in $H^1(B_1^+;\R^d,y^ad{\bf x})$ (see  \cite{FKS}), $v_n\to p$ in $L^2(B_1^+, y^ad{\bf x})$, and therefore in $H^1(B_1^+;\R^d,y^ad{\bf x})$.   By continuity of the trace operator, we then have $v_n\to p$ in $L^2(\partial^0B_1^+)$, and thus $v_n\to p$ a.e. on $\partial^0B_1^+$, up to a further subsequence. 

Consider now the sequence
$$w_n:=\frac{1}{\varepsilon_n}(v_n-\bar v_n) $$
which satisfies 
$${\bf E}_s(w_n,B_1^+)=1\quad\text{and}\quad \int_{B_1^+}w_n\,d{\bf x}=0\,.$$
By Poincar\'e's inequality again, $\{w_n\}$ is bounded in $H^1(B_1^+;\R^d,y^ad{\bf x})$, and we can find a (not relabeled) subsequence such that $w_n\rightharpoonup w$ weakly in $H^1(B_1^+;\R^d,y^ad{\bf x})$.  
By linearity, since $v_n$ solves \eqref{yharmeq} in $B_1^+$, $w_n$ solves  \eqref{yharmeq} as well in $B_1^+$. Consequently, by weak convergence, $w$ satisfies 
\begin{equation}\label{eqintblowup}
{\rm div}(y^a\nabla w)=0\quad \text{in $B_1^+$}\,.
\end{equation}
Next, by continuity of the trace operator, we also deduce that $\{w_n\}$ is bounded in $L^2(\partial^0B_1^+)$. From Lemma \ref{H1tofracesti}, we also infer that $\{w_n\}$ is bounded in $H^s_{\rm loc}(\partial^0B_1^+)$.  By the compact embedding $H^s_{\rm loc}(\partial^0B_1^+)\hookrightarrow L^2_{\rm loc}(\partial^0B_1^+)$, we deduce that, up to a subsequence, $w_n\to w$ a.e. on $\partial^0B_1^+$ and strongly in $L^2_{\rm loc}(\partial^0B_1^+)$. For ${\bf x}\in\partial^0B_1^+$, such that $w_n({\bf x})\to w({\bf x})$ and $v_n({\bf x})\to p$, the sequence 
$$\eps^{-1}_n(v_n({\bf x})-p_n)=w_n({{\bf x}})+\eps^{-1}_n(\bar v_n-p_n)$$ is converging toward a vector in ${\rm Tan}(p,\mathcal{N})$ since $v_n({\bf x})\to p$ and $p_n\to p$. 
Therefore,
\begin{equation}\label{partDircondblowup}
w({\bf x})-q\in {\rm Tan}(p,\mathcal{N}) \text{ for a.e. ${\bf x}\in\partial^0B_1^+$}\,. 
\end{equation}
\vskip5pt

\noindent{\it Step 2.} We claim that 
\begin{equation}\label{eqblowup}
\int_{B_1^+}y^a\nabla w\cdot\nabla\zeta\,d{\bf x}=0 
\end{equation}
for every $\zeta\in C^1(\overline{B_1^+};\R^d)$ satisfying $\zeta({\bf x})\in {\rm Tan}(p,\mathcal{N})$ for every ${\bf x}\in \partial^0B_1^+$, 
and such that ${\rm spt}(\zeta)\subset B_1^+\cup\partial^0B_1^+$. 

To prove \eqref{eqblowup}, we consider the field $\Pi_b$ of $d\times d$ matrices associating to $b\in \mathcal{N}$ the orthogonal projector on ${\rm Tan}(b,\mathcal{N})$. Then we consider a (smooth) compactly supported extension of $\Pi_b$ to the whole $\R^d$. Then $\Pi_{v_n}\in H^1(B_1^+;\R^{d\times d}, y^ad{\bf x})$, and $\Pi_{v_n}\to \Pi_p$ strongly in $H^1(B_1^+;\R^{d\times d}, y^ad{\bf x})$. As a consequence, $\Pi_{v_n}\zeta\to \Pi_p\zeta$ strongly in $H^1(B_1^+;\R^{d}, y^ad{\bf x})$. Since $v_n({\bf x})\in \mathcal{N}$ for a.e. ${\bf x}\in \partial^0B_1^+$, we have  
$\Pi_{v_n({\bf x})}\zeta({\bf x})\in {\rm Tan}(u_n({\bf x}),\mathcal{N})$ for a.e. ${\bf x}\in \partial^0B_1^+$, and thus \eqref{ELfreebound} can be applied, i.e., 
$$\int_{B_1^+}y^a\nabla v_n\cdot\nabla\big(\Pi_{v_n}\zeta\big)\,d{\bf x}=0\,. $$
Therefore, 
$$\int_{B_1^+}y^a\nabla w_n\cdot\nabla\big(\Pi_{v_n}\zeta\big)\,d{\bf x}=0\,. $$
Since $\{w_n\}$ is weakly convergent and $\Pi_{v_n}\zeta$ strongly convergent, we can pass to the limit $n\to\infty$ to derive
\begin{equation}\label{eqtestproj}
\int_{B_1^+}y^a\nabla w\cdot\nabla\big(\Pi_{p}\zeta\big)\,d{\bf x}=0\,. 
\end{equation}
Since $\Pi_{p}\zeta-\zeta=0$ on $\partial^0B_1^+$, we infer from \eqref{eqintblowup} that 
\begin{equation}\label{eqdifftest}
\int_{B_1^+}y^a\nabla w\cdot\nabla\big(\Pi_{p}\zeta-\zeta\big)\,d{\bf x}=0\,.
\end{equation}
Gathering  \eqref{eqtestproj} and \eqref{eqdifftest} yields \eqref{eqblowup}.
\vskip5pt

\noindent{\it Step 3.} Set 
$$(v_n)_{2r_0}:=\int_{B^+_{2r_0}}w_n\,d{\bf x} \,,\quad (w_n)_{2r_0}:=\int_{B^+_{2r_0}}w_n\,d{\bf x}\quad\text{and}\quad  (w)_{2r_0}:=\int_{B^+_{2r_0}}w\,d{\bf x}\,. $$
Since the embedding $H^1(B_1^+,y^ad{\bf x})\hookrightarrow L^2(B_1^+,y^{-a}d{\bf x})$  is compact (see e.g. \cite{Ho}), we have Poincar\'e's inequalities telling us that 
$$\frac{1}{(2r_0)^{1+2s}}\int_{B_{2r_0}^+}y^{-a}|v_n-(v_n)_{2r_0}|^2\,d{\bf x}\leq  \frac{C}{(2r_0)^{1-2s}} {\bf E}_s(v_n,B^+_{2r_0})\leq C{\bf E}_s(v_n,B_1^+)\,,$$
and 
$$\frac{1}{(2r_0)^{1+2s}}\int_{B_{2r_0}^+}y^{-a}d^2_{\mathcal N}(v_n)\,d{\bf x}\leq \frac{C}{(2r_0)^{1-2s}} {\bf E}_s(v_n,B^+_{2r_0})\leq C{\bf E}_s(v_n,B_1^+)\,.$$
Here we have used the monotonicity formula in Lemma \ref{formmonot}, the fact that the function $d_{\mathcal N}$ is $1$-Lipschitz, and  $d_{\mathcal N}(v_n)=0$ on $\partial^0B_1^+$. 
Changing variables, one discovers that the rescaled map ${\bf x}\mapsto v_n(2r_0{\bf x})$ satisfies the small oscillation condition in Corollary~\ref{hybrid}  with $ \xi=(v_n)_{2r_0}$ for $n$ large enough, thanks to  \eqref{vanishenerg}.  
Choosing  $\lambda=1/8$ in that corollary and scaling back, we infer that 
\begin{multline}\label{eqpresfin}
\frac{1}{r_0^{1-2s}}{\bf E}_s(v_n,B^+_{r_0})\leq \frac{1}{8(2r_0)^{1-2s}}{\bf E}_s(v_n,B^+_{2r_0})\\
+\frac{C}{\lambda(2r_0)^{1+2s}}\left(\int_{B_{2r_0}^+}y^{-a}\big|v_n-(v_n)_{2r_0}\big|^2\,d{\bf x}
+\int_{B_{2r_0}^+}y^{-a}d^2_{\mathcal N}(v_n)\,d{\bf x}\right)\,.
\end{multline}
By Lemma \ref{formmonot} again, we have 
\begin{equation}\label{est1413}
\frac{1}{8(2r_0)^{1-2s}}{\bf E}_s(v_n,B^+_{2r_0})\leq \frac{\varepsilon_n^2}{8} \,.
\end{equation}
Then, 
\begin{align*}
\int_{B_{2r_0}^+}y^{-a}\big|v_n-(v_n)_{2r_0}\big|^2\,d{\bf x}&= \varepsilon_n^2\int_{B_{2r_0}^+}y^{-a}\big|w_n-(w_n)_{2r_0}\big|^2\,d{\bf x}\\
&\leq C\varepsilon_n^2\bigg(\int_{B_{2r_0}^+}y^{-a}\big|w-(w)_{2r_0}\big|^2\,d{\bf x}+\\
\int_{B_{2r_0}^+}y^{-a}&\big|w-w_n\big|^2\,d{\bf x}+\int_{B_{2r_0}^+}y^{-a}\big|(w)_{2r_0}-(w_n)_{2r_0}\big|^2\,d{\bf x}\bigg)
\end{align*}
By the two compact embeddings $H^1(B_1^+,y^ad{\bf x})\hookrightarrow L^1(B_1^+)$ and $H^1(B_1^+,y^ad{\bf x})\hookrightarrow L^2(B_1^+,y^{-a}d{\bf x})$, we have 
$w_n\to w$ strongly in $L^2(B_1^+,y^{-a}d{\bf x})$ and $(w_n)_{2r_0}\to (w)_{2r_0}$. Hence, 
\begin{equation}\label{est0924}
\int_{B_{2r_0}^+}y^{-a}\big|v_n-(v_n)_{2r_0}\big|^2\,d{\bf x}= \varepsilon_n^2\int_{B_{2r_0}^+}y^{-a}\big|w-(w)_{2r_0}\big|^2\,d{\bf x}+o(\varepsilon_n^2)\,.
\end{equation}
Next we decompose the map $w$ as $w=:w^T+w^\perp$ where $w^T$ takes values in ${\rm Tan}(p,\mathcal{N})$, and   $w^{\perp}$ takes values in ${\rm Nor}(p,\mathcal{N})$.  From 
\eqref{eqintblowup} and \eqref{partDircondblowup}, we derive that 
$$\begin{cases}
{\rm div}(y^a\nabla w^\perp)=0 & \text{in $B_1^+$}\,,\\
w^\perp= q & \text{on $\partial^0B_1^+$}\,.
\end{cases}$$
From the boundary condition, we can reflect oddly the map $(w^\perp-q)$ to the whole ball $B_1$, so that  the resulting $w^\perp$ belongs to $H^1(B_1,y^ad{\bf x})$ and satisfies 
$${\rm div}(|y|^a\nabla w^\perp)=0\quad\text{in $B_1$}\,.$$ 
By the regularity result in \cite{FKS}, $w^\perp$ is $\alpha$-H\"older continuous in $\overline B_{1/2}$ for some H\"older exponent $\alpha=\alpha(s)\in(0,1)$.  Consequently, 
\begin{equation}\label{est0925}
\int_{B_{2r_0}^+}y^{-a}\big|w^\perp-(w^\perp)_{2r_0}\big|^2\,d{\bf x}\leq Cr_0^{1+2s+2\alpha}\,.
\end{equation}
Next we deduce from \eqref{eqblowup} that 
$$\int_{B_1^+}y^a\nabla w^T\cdot\nabla \zeta\,d{\bf x}=0 $$
for every $\zeta\in C^1(\overline B_1^+;{\rm Tan}(p,\mathcal{N}))$ such that ${\rm spt}(\zeta)\subset B_1^+\cup\partial^0B_1^+$. If we reflect evenly $w^T$ to the whole ball $B_1$, 
then $w^T$ belongs to $H^1(B_1,|y|^ad{\bf x})$ and satisfies 
$${\rm div}(|y|^a\nabla w^T)=0\quad\text{in $B_1$}\,.$$ 
Once again, \cite{FKS} tells us that $w^T$ is $\alpha$-H\"older continuous in $\overline B_{1/2}$, and thus 
\begin{equation}\label{est0926}
\int_{B_{2r_0}^+}y^{-a}\big|w^T-(w^T)_{2r_0}\big|^2\,d{\bf x}\leq Cr_0^{1+2s+2\alpha}\,.
\end{equation}
In view of \eqref{est0924}, \eqref{est0925} and \eqref{est0926},  we have proved that 
\begin{equation}\label{est1418}
\int_{B_{2r_0}^+}y^{-a}\big|v_n-(v_n)_{2r_0}\big|^2\,d{\bf x}\leq C\eps_n^2r_0^{1+2s+2\alpha}+o(\varepsilon_n^2)\,.
\end{equation}

Finally, to estimate the last term in the right hand side of \eqref{eqpresfin}, we proceed as follows. First notice that 
$d_{\mathcal{N}}(v_n)\leq \eps_n|w_n| +|\bar v_n-p_n|$, so that $\eps_n^{-1}d_{\mathcal{N}}(v_n)$ is bounded in $L^2(B_1^+,y^ad{\bf x})$. Since $d_{\mathcal{N}}$ is $1$-Lipschitz, 
we have $|\nabla d_{\mathcal{N}}(v_n)|\leq \eps_n|\nabla w_n|$, and thus $\{\eps_n^{-1}d_{\mathcal{N}}(v_n)\}$ is bounded in $H^1(B^+_1,y^ad{\bf x})$. Since the embedding $H^1(B_1^+,y^ad{\bf x})\hookrightarrow L^2(B_1^+,y^{-a}d{\bf x})$ is compact,  we can assume that $\eps_n^{-1}d_{\mathcal{N}}(v_n)\to d$ in $L^2(B_1^+,y^{-a}d{\bf x})$ for some function $d\in H^1(B_1^+,y^ad{\bf x})$. Up to a further subsequence, we also have $v_n({\bf x})\to a$, $w_n({\bf x})\to w({\bf x})$, and $\eps_n^1d_{\mathcal{N}}(v_n({\bf x}))\to d({\bf x})$ for a.e. ${\bf x}\in B_1^+$. 

Given ${\bf x}\in B_1^+$ such that these convergences hold at ${\bf x}$, we have 
$$\eps_n^{-1}(v_n({\bf x})-p_n)=w_n({\bf x})+\eps_n^{-1}(\bar v_n-p_n)\to w({\bf x})-q\,.$$ 
On the other hand, for $n$ large enough, $v_n({\bf x})$ has a unique nearest point ${\bf v}_n\in\mathcal{N}$, and ${\bf v}_n\to p$.   Since $|v_n({\bf x})-p_n|\geq d_{\mathcal{N}}(v_n({\bf x}))=|v_n({\bf x})-{\bf v}_n|$ and $v_n({\bf x})-{\bf v}_n\in {\rm Nor}({\bf v}_n,\mathcal{N})$, 
$\eps_n^{-1}(v_n({\bf x})-{\bf v}_n)\to {\bf n}$ for some ${\bf n}\in {\rm Nor}(p,\mathcal{N})$, taking a subsequence if necessary. In turn, it implies that ${\eps_n^{-1}}({\bf v}_n-p_n)$ is converging toward a vector ${\bf t}\in {\rm Tan}(p,\mathcal{N})$. Consequently, ${\bf t}+{\bf n}=w({\bf x})-q$, so that ${\bf n}=w^\perp({\bf x})-q$, and thus $d({\bf x})=|w^{\perp}(x)-q|$. 

We have thus shown that $\eps_n^{-1}d_{\mathcal{N}}(v_n)\to |w^\perp -q|$ a.e. in $B^+_{1}$, and therefore in $L^2(B_1^+,y^{-a}d{\bf x})$. Hence, 
$$\int_{B_{2r_0}^+} y^{-a}d^2_{\mathcal{N}}(v_n)\,d{\bf x}=\eps_n^2\int_{B_{2r_0}^+} y^{-a}|w^\perp-q|^2\,d{\bf x} +o(\eps_n^2)\,.$$
Since $w^\perp$ is $\alpha$-H\"older continuous in $\overline B^+_{1/2}$ and $w^\perp-q=0$ on $\partial^0B_1^+$, we conclude that 
\begin{equation}\label{est1419}
\int_{B_{2r_0}^+} y^{-a}d^2_{\mathcal{N}}(v_n)\,d{\bf x}\leq C\eps_n^2r_0^{1+2s+2\alpha} +o(\eps_n^2)\,.
\end{equation}

Gathering \eqref{eqpresfin}, \eqref{est1413}, \eqref{est1418}, and \eqref{est1419} yields 
$$\frac{1}{r_0^{1-2s}}{\bf E}_s(v_n,B^+_{r_0})\leq \frac{\varepsilon_n^2}{8}+C\eps_n^2r_0^{2\alpha}+o(\varepsilon_n^2)\,.$$
Choosing $r_0$ small enough (in such a way that $Cr_0^{2\alpha}\leq 1/8$), we conclude that 
$$ \frac{1}{r_0^{1-2s}}{\bf E}_s(v_n,B^+_{r_0})< \frac{\eps_n^2}{2}$$
for $n$ large enough, contradicting \eqref{est1425}. 
\end{proof}

Arguing exactly as \cite[Theorem 2.5]{HL}, we infer from Theorem \ref{thmdecay} the following decay estimate. 

\begin{corollary}[\bf Energy decay]\label{cordecay}
If $v\in H^1(B_{2R}^+;\R^d,y^ad{\bf x})$ is a minimizing weighted harmonic map in $B_{2R}^+$ satisfying  ${\bf E}_s(v,B_{2R}^+)\leq \varepsilon^2_2 R^{1-2s}$, 
then 
$$\frac{1}{r^{1-2s}}{\bf E}_s\big(v,B_r^+({\bf x})\big)\leq CR^{-\beta} r^\beta\quad\text{for all ${\bf x}\in \partial^0B_R^+$ and $0<r\leq R$}\,,$$
for some exponent $\beta\in(0,1)$ depending only on $s$ and $\mathcal{N}$.  
\end{corollary}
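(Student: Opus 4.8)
The plan is to iterate the energy improvement of Theorem~\ref{thmdecay} along the geometric sequence of radii $r_0^kR$, to fill in the intermediate scales by means of the monotonicity formula, and to finish with a Morrey-type interpolation, exactly as in \cite[Theorem~2.5]{HL}. Two elementary invariances enter. First, \emph{restriction}: if $v$ is a minimizing weighted harmonic map with free boundary in $\Omega$ and $B_\rho^+({\bf x})\subset\Omega$ with ${\bf x}\in\partial^0\Omega$, then $v$ is still minimizing with free boundary in $B_\rho^+({\bf x})$, because any competitor $w$ with ${\rm spt}(w-v)\subset B_\rho^+({\bf x})\cup\partial^0 B_\rho^+({\bf x})$ extends by $v$ to an admissible competitor on $\Omega$. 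Second, \emph{scaling}: the rescaled map $v_{{\bf x},\rho}({\bf z}):=v({\bf x}+\rho{\bf z})$ is a minimizing weighted harmonic map with free boundary in $B_1^+$, and a change of variables gives ${\bf E}_s(v_{{\bf x},\rho},B_t^+)=\rho^{-(1-2s)}{\bf E}_s(v,B_{\rho t}^+({\bf x}))$ for $0<t\le1$; in particular ${\bf E}_s(v_{{\bf x},\rho},B_1^+)=\mathbf{\Theta}_v({\bf x},\rho)$.

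Fix ${\bf x}\in\partial^0 B_R^+$. Since $B_r^+({\bf x})\subset B_{2R}^+$ for every $0<r\le R$, the monotonicity formula (Lemma~\ref{formmonot}) shows that $r\mapsto\mathbf{\Theta}_v({\bf x},r)$ is non-decreasing on $(0,R]$; in particular $\mathbf{\Theta}_v({\bf x},R)\le R^{-(1-2s)}{\bf E}_s(v,B_{2R}^+)\le\varepsilon_2^2$. I would then prove by induction on $k\ge0$ that $\mathbf{\Theta}_v({\bf x},r_0^kR)\le2^{-k}\varepsilon_2^2$. The case $k=0$ is the inequality just obtained. For the inductive step, $v_{{\bf x},r_0^kR}$ is a minimizing weighted harmonic map in $B_1^+$ with ${\bf E}_s(v_{{\bf x},r_0^kR},B_1^+)=\mathbf{\Theta}_v({\bf x},r_0^kR)\le\varepsilon_2^2$, so Theorem~\ref{thmdecay} applies to it; rescaling its conclusion gives $\mathbf{\Theta}_v({\bf x},r_0^{k+1}R)<\frac12\mathbf{\Theta}_v({\bf x},r_0^kR)\le2^{-(k+1)}\varepsilon_2^2$. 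The smallness hypothesis of Theorem~\ref{thmdecay} is available at every step precisely because each $\mathbf{\Theta}_v({\bf x},r_0^kR)$ is bounded by $2^{-k}\varepsilon_2^2\le\varepsilon_2^2$.

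For general $0<r\le R$, pick the integer $k\ge0$ with $r_0^{k+1}R<r\le r_0^kR$. By monotonicity and the induction, $\mathbf{\Theta}_v({\bf x},r)\le\mathbf{\Theta}_v({\bf x},r_0^kR)\le2^{-k}\varepsilon_2^2$; on the other hand $r>r_0^{k+1}R$ forces $2^{-k}\le2(r/R)^\beta$, where $\beta:=\log2/\log(1/r_0)$ lies in $(0,1)$ since $r_0\in(0,1/2)$. Hence $\mathbf{\Theta}_v({\bf x},r)\le2\varepsilon_2^2\,R^{-\beta}r^\beta$, which is the assertion with $C=2\varepsilon_2^2$, both $C$ and $\beta$ depending only on $s$ and $\mathcal{N}$ through $\varepsilon_2$ and $r_0$. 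I do not anticipate a genuine obstacle: this is the standard iteration behind Morrey's decay lemma, and the only points requiring a line of justification are the two invariances above and the elementary estimate $2^{-k}\le2(r/R)^\beta$ used in the interpolation.
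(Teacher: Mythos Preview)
Your proof is correct and is precisely the standard iteration that the paper has in mind when it says ``Arguing exactly as \cite[Theorem~2.5]{HL}'': the paper gives no further details beyond that reference, and your argument (restriction plus scaling to feed Theorem~\ref{thmdecay} at each dyadic scale $r_0^kR$, then monotonicity to interpolate, with $\beta=\log 2/\log(1/r_0)$) is exactly that argument spelled out.
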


In turn, this last corollary implies H\"older continuity at the boundary as in  Morrey's lemma. 

\begin{corollary}\label{cordecay2}
In addition to Corollary \ref{cordecay}, $v$ is H\"older continuous on $\partial^0B_R^+$ with H\"older exponent $\beta/2$. 
\end{corollary}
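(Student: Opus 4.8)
The plan is to run a Campanato-type estimate adapted to the weight $y^a$: the energy decay of Corollary~\ref{cordecay}, combined with a weighted Poincar\'e inequality, controls the $L^2$-oscillation of $v$ over half-balls centred on $\partial^0B^+_R$, and summing over dyadic scales yields H\"older continuity of the trace with exponent $\beta/2$. Throughout write $\mu:=y^a\,\mathcal{L}^2$, and for ${\bf x}_0=(x_0,0)\in\partial^0B^+_R$ and $0<r\leq R$ set $\bar v_{{\bf x}_0,r}:=\mu(B^+_r({\bf x}_0))^{-1}\int_{B^+_r({\bf x}_0)}y^a v\,d{\bf x}$. Observe that $B^+_r({\bf x}_0)\subset B^+_{2R}$ (so $v$ is defined there) and that $\mu(B^+_r({\bf x}_0))=c_s\,r^{3-2s}$ is independent of the centre on $\partial\R^2_+$, by translation in $x$ and the dilation ${\bf x}\mapsto r{\bf x}$ (recall $a=1-2s\in(-1,1)$, hence $2+a=3-2s$). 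Reflecting $v$ evenly across $\{y=0\}$ and applying the Poincar\'e inequality for the Muckenhoupt $A_2$-weight $|y|^a$ on the ball $B_r({\bf x}_0)$ (see \cite{FKS}), one gets, with a scale-invariant constant,
$$\int_{B^+_r({\bf x}_0)}y^a\big|v-\bar v_{{\bf x}_0,r}\big|^2\,d{\bf x}\leq Cr^2\int_{B^+_r({\bf x}_0)}y^a|\nabla v|^2\,d{\bf x}=2Cr^2\,{\bf E}_s\big(v,B^+_r({\bf x}_0)\big)\leq CR^{-\beta}r^{3-2s+\beta}\,,$$
the last inequality being exactly Corollary~\ref{cordecay}.

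Next I would compare dyadic averages. Since $B^+_{r/2}({\bf x}_0)\subset B^+_r({\bf x}_0)$, Jensen's inequality gives $|\bar v_{{\bf x}_0,r/2}-\bar v_{{\bf x}_0,r}|^2\leq \mu(B^+_{r/2}({\bf x}_0))^{-1}\int_{B^+_{r/2}({\bf x}_0)}y^a|v-\bar v_{{\bf x}_0,r}|^2\,d{\bf x}$; bounding the right-hand side via the previous display and $\mu(B^+_{r/2}({\bf x}_0))\geq c\,r^{3-2s}$ yields $|\bar v_{{\bf x}_0,r/2}-\bar v_{{\bf x}_0,r}|\leq CR^{-\beta/2}r^{\beta/2}$. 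Summing the geometric series over $r,r/2,r/4,\dots$ shows $\bar v_{{\bf x}_0,2^{-k}r}$ is Cauchy; denote its limit $v^\ast({\bf x}_0)$, and telescoping gives
$$\big|v^\ast({\bf x}_0)-\bar v_{{\bf x}_0,r}\big|\leq CR^{-\beta/2}r^{\beta/2}\qquad\text{for all }0<r\leq R\,.$$
By a standard Lebesgue-point property of the trace (for the doubling measure $\mu$, using the continuity of the trace operator on $H^1(B^+_{2R};\R^d,y^ad{\bf x})$), $v^\ast$ agrees a.e.\ on $\partial^0B^+_R$ with the trace of $v$; thus $v^\ast$ is the continuous representative we must estimate.

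Finally I would compare two boundary points ${\bf x}_1,{\bf x}_2\in\partial^0B^+_R$, with $\rho:=|{\bf x}_1-{\bf x}_2|$. If $\rho\geq R/2$ the bound is immediate: $v$ takes values a.e.\ on $\partial^0B^+_R$ in the compact manifold $\mathcal{N}$, so $|v^\ast({\bf x}_1)-v^\ast({\bf x}_2)|\leq \rmdiam\,\mathcal{N}\leq CR^{-\beta/2}\rho^{\beta/2}$ (since $R^{-\beta/2}\rho^{\beta/2}\geq 2^{-\beta/2}$ in this range). If $\rho<R/2$, set $\sigma:=2\rho\leq R$ and note $B^+_\rho({\bf x}_1)\subset B^+_\sigma({\bf x}_1)\cap B^+_\sigma({\bf x}_2)$. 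Inserting the average of $v$ over $B^+_\rho({\bf x}_1)$, using the triangle inequality and Jensen on each piece (so that one integrates over $B^+_\rho({\bf x}_1)\subset B^+_\sigma({\bf x}_i)$), and invoking the first display at scale $\sigma$ centred at ${\bf x}_1$ and at ${\bf x}_2$ together with $\mu(B^+_\rho({\bf x}_1))\geq c\rho^{3-2s}$ and $\sigma=2\rho$, one obtains $|\bar v_{{\bf x}_1,\sigma}-\bar v_{{\bf x}_2,\sigma}|\leq CR^{-\beta/2}\rho^{\beta/2}$. Combining this with $|v^\ast({\bf x}_i)-\bar v_{{\bf x}_i,\sigma}|\leq CR^{-\beta/2}\sigma^{\beta/2}$ from the second display gives
$$\big|v^\ast({\bf x}_1)-v^\ast({\bf x}_2)\big|\leq CR^{-\beta/2}\rho^{\beta/2}=CR^{-\beta/2}|{\bf x}_1-{\bf x}_2|^{\beta/2}\,,$$
which is the asserted H\"older continuity with exponent $\beta/2$. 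The argument is essentially routine; the only points requiring genuine care are the scale-invariance of the weighted Poincar\'e constant on half-balls centred on $\partial^0B^+_R$ (where the $A_2$ property of $y^a$ and the homogeneity of the weight under ${\bf x}\mapsto r{\bf x}$ enter, cf.~\cite{FKS}) and keeping track of the weighted volume growth $\mu(B^+_r)\simeq r^{3-2s}$ — it is precisely this $(3-2s)$-dimensional scaling that turns the energy-decay exponent $\beta$ into the H\"older exponent $\beta/2$.
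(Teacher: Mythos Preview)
Your argument is correct, but it takes a different route from the paper's. The paper first transfers the energy decay to the boundary via Lemma~\ref{H1tofracesti}, obtaining $r^{2s-1}[v]^2_{H^s(\omega_r(x_0))}\leq CR^{-\beta}r^\beta$ on the interval $\omega_r(x_0)=\partial^0B^+_r({\bf x}_0)$, then applies the one-dimensional fractional Poincar\'e inequality to get $r^{-2}\|v-(v)_{x_0,r}\|^2_{L^2(\omega_r(x_0))}\leq CR^{-\beta}r^\beta$, and finishes with the standard Campanato criterion on the interval. You instead stay in the two-dimensional weighted setting, running Campanato directly with the $y^a$-weighted half-ball averages and invoking the $A_2$-Poincar\'e inequality of \cite{FKS}; this avoids Lemma~\ref{H1tofracesti} and the fractional Poincar\'e, but at the cost of having to identify the limit $v^\ast$ of the bulk averages with the boundary trace --- a step that is routine but does require justification in the weighted context. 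One small wrinkle: in your treatment of the case $\rho\geq R/2$ you use $v^\ast({\bf x}_i)\in\mathcal{N}$, which relies on $v^\ast$ already being continuous (so that the a.e.\ equality with the $\mathcal{N}$-valued trace upgrades to everywhere); it is cleaner to handle large $\rho$ by chaining the small-$\rho$ estimate through intermediate points, which needs no such assumption. Either approach yields the same exponent $\beta/2$, and both ultimately rest on the same scaling identity $\mu(B_r^+)\simeq r^{3-2s}$.
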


\begin{proof}
Combining Corollary \ref{cordecay} with Lemma \ref{H1tofracesti}, we first infer that 
$$\frac{1}{r^{1-2s}}\iint_{\omega_r(x_0)\times\omega_r(x_0)}\frac{|v(x)-v(y)|^2)}{|x-y|^{1+2s}}\,dxdy\leq CR^{-\beta} r^\beta $$
for every ${\bf x}_0=(x_0,0)\in \partial^0B_R^+$ and $0<r\leq R$, where we have set $\omega_r(x_0):=\partial^0B_r^+({\bf x}_0)$. Setting 
$$(v)_{x_0,r}:=\frac{1}{2r}\int_{\omega_r(x_0)} v\,dx\,, $$
we deduce from Poincar\'e's inequality in $H^s(\omega_r(x_0))$ that 
\begin{multline*}
\frac{1}{r^2}\int_{\omega_r(x_0)} |v-(v)_{x_0,r}|^2\,dx
\leq\frac{C}{r^{1-2s}}\iint_{\omega_r(x_0)\times\omega_r(x_0)}\frac{|v(x)-v(y)|^2}{|x-y|^{1+2s}}\,dxdy\\
\leq CR^{-\beta}r^\beta\,,
\end{multline*}
for all $x_0\in \omega_R(0)$ and $0< r \leq R$. The conclusion then follows from Campanato's criterion (see e.g. \cite[Chapter 6.1]{Mag}). 
\end{proof}

\section{Compactness of minimizing $s$-harmonic maps} \label{compactnes}  

This section is devoted to  compactness of minimizing $s$-harmonic maps. As it will be clear in a few lines, the proof is here much simpler compare to classical harmonic maps, as minimality can be directly tested (as if the exterior condition were fixed). Consequences concerning the extensions and densities are then easy exercices.

\begin{theorem}\label{thmcomp}
Let $\omega\subset\R$ be a bounded open interval, and $\{u_n\}_{n\in\mathbb{N}}\subset \widehat H^s(\omega;\mathcal{N})$ a sequence of minimizing $s$-harmonic maps in $\omega$. 
Assume that $\sup_n \mathcal{E}_s(u_n,\omega)<\infty$, and $u_n\to u$ in $L^2_{\rm loc}(\R)$. Then $u\in \widehat H^s(\omega;\mathcal{N})$ is a minimizing $s$-harmonic map in $\omega$, $u_n\to u$ strongly in $H^s_{\rm loc}(\omega)$, and $\mathcal{E}_s(u_n,\omega^\prime)\to \mathcal{E}_s(u,\omega^\prime)$ for every open interval such that $\overline\omega^\prime\subset\omega$.  
\end{theorem}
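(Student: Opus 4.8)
\textbf{Plan for the proof of Theorem \ref{thmcomp}.}
The core of the argument is a direct competitor comparison, exploiting that minimality of $s$-harmonic maps can be tested against any competitor agreeing with $u_n$ outside $\omega$ — in particular, a competitor whose exterior data is \emph{not} $u_n|_{\R\setminus\omega}$ but something built from the limit $u$. First I would fix an open interval $\omega'$ with $\overline{\omega'}\subset\omega$ and, using Lemma \ref{H1tofracesti} or rather a one-dimensional averaging argument over the (one or two) endpoints of $\omega'$, choose a good radius so that the $H^s$-trace estimates on the sliced intervals are controlled by the global energy; this gives, after passing to a subsequence, $u_n\rightharpoonup u$ weakly in $\widehat H^s(\omega;\R^d)$ (the bounded energy hypothesis plus $L^2_{\rm loc}$ convergence pins down the weak limit), and $u(x)\in\mathcal N$ a.e.\ from a.e.\ convergence, so $u\in\widehat H^s(\omega;\mathcal N)$.

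Next I would prove the energy inequality $\mathcal E_s(u,\omega')\le\liminf_n\mathcal E_s(u_n,\omega')$ by weak lower semicontinuity of the Gagliardo seminorm (and Fatou for the nonlocal interaction term), and then the reverse inequality by a cut-and-paste comparison. Given any competitor $\widetilde u\in\widehat H^s(\omega;\mathcal N)$ with ${\rm spt}(\widetilde u-u)\subset\omega$, one wants to perturb $u_n$ into a valid competitor that looks like $\widetilde u$ deep inside. Here the manifold constraint is the subtlety: one cannot simply take convex combinations, so I would interpolate on a thin annular interval between $u_n$ and $\widetilde u$ (or between $u_n$ and $u$, then $u$ and $\widetilde u$), and compose with the nearest-point projection $\pi_{\mathcal N}$ — which is admissible once the interpolated map is $L^\infty$-close to $\mathcal N$, guaranteed because $u_n\to u$ in $L^2$ and, on the annulus, we can choose the slicing radius so that $u_n\to u$ there strongly enough (in $L^2$ of the interval, hence, along a subsequence, pointwise a.e., and the relevant maps are $\mathcal N$-valued). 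Testing minimality of $u_n$ against this competitor, and carefully estimating the nonlocal energy of the spliced map — the Gagliardo double integral over $\omega'\times\omega'$ plus the cross term $\omega'\times(\R\setminus\omega')$, the latter handled because outside $\omega$ everything equals $u_n$ and the interpolation annulus contributes a term that vanishes as its width is sent to zero after $n\to\infty$ — yields $\limsup_n\mathcal E_s(u_n,\omega')\le\mathcal E_s(\widetilde u,\omega')+o(1)$. Taking $\widetilde u=u$ gives $\limsup_n\mathcal E_s(u_n,\omega')\le\mathcal E_s(u,\omega')$, which together with lower semicontinuity gives $\mathcal E_s(u_n,\omega')\to\mathcal E_s(u,\omega')$; taking general $\widetilde u$ and letting $\omega'\uparrow\omega$ gives minimality of $u$. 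Strong $H^s_{\rm loc}$ convergence then follows from convergence of the (squared) seminorms plus weak convergence, by the uniform-convexity/Radon–Riesz argument on Hilbert spaces.

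\textbf{Main obstacle.} The delicate point is the cut-and-paste step: constructing an $\mathcal N$-valued competitor that interpolates between $u_n$ and the target on a thin interval, with nonlocal energy cost that is genuinely $o(1)$. Two issues compound — first, the fractional energy is nonlocal, so modifying $u_n$ on a small interval perturbs the interaction integral against the whole line, and one must check this perturbation is small (it is, because the modification is $L^2$-small and supported on an interval of vanishing length, and the kernel $|x-y|^{-1-2s}$ with $s<1/2$ is only mildly singular, but this requires a quantitative interpolation estimate — essentially an $H^s$-capacity bound for thin intervals, cf.\ Lemma \ref{H1tofracesti} applied on the extension side); second, the manifold constraint forces the projection $\pi_{\mathcal N}$, which is only defined and Lipschitz near $\mathcal N$, so one must ensure the interpolant stays in a tubular neighborhood — this is where choosing the slicing radius to get pointwise/$L^2$ control of $u_n-u$ on the annulus, and using that $u_n$ takes values in $\mathcal N$, is essential. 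As the excerpt itself notes, this is \emph{simpler} than the classical harmonic map case because we need not preserve an exterior boundary condition; nonetheless the nonlocal bookkeeping is where the real work lies.
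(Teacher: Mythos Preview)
Your overall architecture — lower semicontinuity via Fatou, a competitor comparison for the reverse inequality and minimality, then norm convergence plus weak convergence giving strong $H^s_{\rm loc}$-convergence — is exactly the paper's. But your competitor construction is substantially more elaborate than what the setting requires, and the ``main obstacle'' you flag never actually arises.

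The paper exploits two features you do not use: $\mathcal N$ is compact, so all maps involved are uniformly bounded; and $s<1/2$, so the kernel $|x-y|^{-1-2s}$ is integrable across an interface, i.e.\ $\iint_{\omega'\times(\R\setminus\omega')}|x-y|^{-1-2s}\,dxdy<\infty$ for any bounded interval $\omega'$. Together these mean that a \emph{hard} cut-and-paste already produces an admissible competitor. Given $\widetilde u$ with $\mathrm{spt}(\widetilde u-u)\subset\omega'\Subset\omega$, the paper simply sets
\[
\widetilde u_k(x):=\begin{cases}\widetilde u(x)& x\in\omega'\,,\\ u_k(x)& x\in\R\setminus\omega'\,,\end{cases}
\]
with no transition layer, no interpolation, and no nearest-point projection. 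This map is trivially $\mathcal N$-valued (both pieces are), and it lies in $\widehat H^s(\omega;\mathcal N)$ because the cross term is dominated by $C\iint_{\omega'\times(\R\setminus\omega')}|x-y|^{-1-2s}\in L^1$. Minimality of $u_k$ gives $\mathcal E_s(u_k,\omega')\le\mathcal E_s(\widetilde u_k,\omega')$, and the right-hand side converges to $\mathcal E_s(\widetilde u,\omega')$ by dominated convergence on that same cross term (the domination is the integrable kernel times ${\rm diam}(\mathcal N)^2$). That is the entire comparison step.

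Your Luckhaus-type scheme — interpolate on a shrinking annulus, compose with $\pi_{\mathcal N}$, control the $H^s$-capacity of the transition — is the machinery one is forced into when $s\ge 1/2$ (jumps then have infinite energy) or when the target is noncompact. Here it is avoidable, and the paper's remark that ``minimality can be directly tested (as if the exterior condition were fixed)'' is pointing at precisely this shortcut. Your worry about keeping the interpolant within a tubular neighborhood of $\mathcal N$ from only $L^2$-information — which is indeed delicate in the classical argument — simply evaporates. You already noticed that the kernel is ``only mildly singular'' for $s<1/2$; push that observation one step further and the whole interpolation apparatus becomes unnecessary.
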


\begin{proof}
First we select a subsequence $u_k:=u_{n_k}$ such that $u_k\to u$ a.e. on $\R$, and 
$$\lim_{k\to\infty}\mathcal{E}_s(u_k,\omega)=\liminf_{n\to\infty}\mathcal{E}_s(u_n,\omega)<\infty\,. $$
Since each $u_k$ takes values into $\mathcal{N}$, we infer from the pointwise convergence that $u(x)\in\mathcal{N}$ for a.e. $x\in\R$. Then, by Fatou's lemma, we have 
$$ \mathcal{E}_s(u,\omega)\leq \lim_{k\to\infty}\mathcal{E}_s(u_k,\omega)\,,$$
so that $u\in \widehat H^s(\omega;\mathcal{N})$. 

Let us now consider $\widetilde u \in \widehat H^s(\omega;\mathcal{N})$ such that ${\rm spt}(u-\widetilde u)\subset \omega$. We select an open interval $\omega^\prime$ such that  
${\rm spt}(u-\widetilde u)\subset \omega^\prime$ and $\overline\omega^\prime\subset\omega$. Define
$$\widetilde u_k(x):=\begin{cases}
\widetilde u(x) & \text{if $x\in \omega^\prime$}\,,\\
u_k & \text{if $x\in \R\setminus\omega^\prime$}\,.
\end{cases} $$
It is elementary to check that $\widetilde u_k\in \widehat H^s(\omega;\mathcal{N})$, and of course ${\rm spt}(\widetilde u_k-u_k)\subset \omega$.  By minimality of $u_k$, we have 
$\mathcal{E}_s(u_k,\omega)\leq \mathcal{E}_s(\widetilde u_k,\omega)$ which leads to 
\begin{multline}\label{testminseq}
\mathcal{E}_s(u_k,\omega^\prime)\leq \mathcal{E}_s(\widetilde u_k,\omega^\prime)
=\frac{\gamma_s}{2}\iint_{\omega^\prime\times\omega^\prime}\frac{|\widetilde u(x)-\widetilde u(y)|^2}{|x-y|^{1+2s}}\,dxdy\\
+\gamma_s \iint_{\omega^\prime\times(\R\setminus\omega^\prime)}\frac{|\widetilde u(x)-u_k(y)|^2}{|x-y|^{1+2s}}\,dxdy\,.
\end{multline}
Since $\widetilde u$ and $u_k$ are taking values in $\mathcal{N}$, we have 
$$\frac{|\widetilde u(x)-u_k(y)|^2}{|x-y|^{1+2s}} \leq  \frac{C}{|x-y|^{1+2s}} \in L^1\big(\omega^\prime\times(\R\setminus\omega^\prime)\big)\,.$$
Hence $\mathcal{E}_s(\widetilde u_k,\omega^\prime)\to \mathcal{E}_s(\widetilde u,\omega^\prime)$ by dominated convergence and the fact that $\widetilde u=u$ a.e. in $\R\setminus\omega^\prime$. On the other hand, $\liminf_k\mathcal{E}_s(u_k,\omega^\prime)\geq \mathcal{E}_s(u,\omega^\prime)$, still by Fatou's lemma. Letting $k\to \infty$ in \eqref{testminseq}, we can now conclude that 
$\mathcal{E}_s(u,\omega^\prime)\leq  \mathcal{E}_s(\widetilde u,\omega^\prime)$. Once again, since $\widetilde u=u$ a.e. in $\R\setminus\omega^\prime$, this yields $\mathcal{E}_s(u,\omega)\leq  \mathcal{E}_s(\widetilde u,\omega)$. We have thus proved that $u$ is a minimizing $s$-harmonic map in $\omega$. 

In addition, the argument above applied to $\widetilde u=u$ shows that $\mathcal{E}_s(u_k,\omega^\prime)\to \mathcal{E}_s(u,\omega^\prime)$. In turn, 
$$  \iint_{\omega^\prime\times(\R\setminus\omega^\prime)}\frac{|u_k(x)-u_k(y)|^2}{|x-y|^{1+2s}}\,dxdy\to  \iint_{\omega^\prime\times(\R\setminus\omega^\prime)}\frac{|u(x)-u(y)|^2}{|x-y|^{1+2s}}\,dxdy\,,$$
again by dominated convergence. Hence, 
\begin{equation}\label{cvhsnorm}
\iint_{\omega^\prime\times\omega^\prime}\frac{|u_k(x)-u_k(y)|^2}{|x-y|^{1+2s}}\,dxdy\to \iint_{\omega^\prime\times\omega^\prime}\frac{|u(x)-u(y)|^2}{|x-y|^{1+2s}}\,dxdy \,.
\end{equation}
Since $\{u_k\}$ is bounded in $H^s(\omega^\prime)$ and $u_k\to u$ pointwise., we have  $u_k\rightharpoonup u$ weakly in $H^s(\omega^\prime)$. 
Then   \eqref{cvhsnorm} implies that $u_k\to u$ strongly in $H^s(\omega^\prime)$. 
\end{proof}

\begin{theorem}\label{compext}
In addition to Theorem \ref{thmcomp}, $u_n^{\rm e}\to u^{\rm e}$ strongly in $H^1(\Omega;\R^d, y^ad{\bf x})$ for every bounded admissible open set $\Omega\subset\R^2_+$ such that $\overline{\partial^0\Omega}\subset\omega$. 
\end{theorem}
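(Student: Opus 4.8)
The plan is to combine the strong $H^s_{\rm loc}$-convergence on the trace already granted by Theorem \ref{thmcomp} with the fact that the fractional harmonic extension operator $u\mapsto u^{\rm e}$ is (essentially) an isometry between the appropriate fractional Sobolev space on $\R$ and the weighted $H^1$ space on $\R^2_+$, modulo localization. First I would fix a bounded admissible $\Omega$ with $\overline{\partial^0\Omega}\subset\omega$ and choose an auxiliary interval $\omega'$ with $\overline{\partial^0\Omega}\subset\omega'$ and $\overline{\omega'}\subset\omega$, and then a slightly larger admissible set $\Omega'$ with $\Omega\Subset\Omega'$, $\overline{\partial^0\Omega'}\subset\omega'$. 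The key point is that $u_n^{\rm e}-u^{\rm e}=(u_n-u)^{\rm e}$ by linearity of the extension, so it suffices to show $(u_n-u)^{\rm e}\to 0$ in $H^1(\Omega;\R^d,y^a d{\bf x})$. Since $u_n\to u$ strongly in $H^s(\omega')$ by Theorem \ref{thmcomp}, and $u_n-u\to 0$ in $L^2_{\rm loc}(\R)$ with a uniform bound on $\mathcal E_s(u_n-u,\omega')$ outside, the first step is to control $\|(u_n-u)^{\rm e}\|_{H^1(\Omega;y^a d{\bf x})}$ by $\|u_n-u\|_{H^s(\omega')}$ plus a tail term measuring $u_n-u$ far away.

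The main analytic ingredient is a continuity estimate for the extension of the form
\begin{equation}\label{extcontesti}
\|\varphi^{\rm e}\|_{H^1(\Omega;\R^d,y^a d{\bf x})}^2\leq C\|\varphi\|_{L^2(\omega')}^2+C\iint_{\omega'\times\omega'}\frac{|\varphi(x)-\varphi(y)|^2}{|x-y|^{1+2s}}\,dxdy+C\iint_{\omega'\times(\R\setminus\omega')}\frac{|\varphi(x)-\varphi(y)|^2}{|x-y|^{1+2s}}\,dxdy
\end{equation}
valid for $\varphi\in\widehat H^s(\omega';\R^d)$, with $C=C(s,\Omega,\omega')$; this is exactly the type of estimate recorded in \cite[Section 2]{MSW} (it follows from the Poisson kernel representation of $u^{\rm e}$ together with the energy identity ${\bf E}_s(u^{\rm e},\R^2_+)=\tfrac12\|(-\Delta)^{s/2}u\|_{L^2(\R)}^2$ and a cut-off argument as in Lemma \ref{H1tofracesti}). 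I would invoke this, applied to $\varphi=u_n-u$. The $L^2(\omega')$ term and the cross term $\omega'\times\omega'$ go to $0$ by the strong $H^s(\omega')$-convergence. For the remaining tail term $\iint_{\omega'\times(\R\setminus\omega')}$, split $\R\setminus\omega'$ into a bounded annulus $\omega\setminus\omega'$, on which the integrand is controlled using $u_n\to u$ in $L^2_{\rm loc}$ and the equiboundedness of $\mathcal E_s(u_n,\omega)$ (a Vitali/dominated-convergence argument), and the unbounded part $\R\setminus\omega$, where $\rmdist(\omega',\R\setminus\omega)>0$ forces $|x-y|^{-1-2s}$ to be integrable and one uses $|u_n(x)-u_n(y)|^2\leq C$ together with $u_n\to u$ a.e. and dominated convergence. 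Hence the whole right-hand side of \eqref{extcontesti} tends to $0$.

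Concretely the steps are: (i) reduce to $(u_n-u)^{\rm e}\to 0$ by linearity; (ii) quote the extension continuity bound \eqref{extcontesti} from \cite{MSW}; (iii) handle the near terms via Theorem \ref{thmcomp}; (iv) handle the far/tail terms via the $L^2_{\rm loc}$-convergence, the boundedness of the nonlocal energies, and the separation $\rmdist(\omega',\R\setminus\omega)>0$; (v) conclude $u_n^{\rm e}\to u^{\rm e}$ in $H^1(\Omega;\R^d,y^a d{\bf x})$, and finally remark that admissibility of $\Omega$ and $\overline{\partial^0\Omega}\subset\omega$ are exactly what guarantees one can fit $\omega'$ in between and that $u^{\rm e}\in H^1(\Omega;\R^d,y^ad{\bf x})$ in the first place (as already recorded after \eqref{eqextint}). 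I expect the main obstacle to be step (iv), the careful treatment of the interaction tail $\iint_{\omega'\times(\R\setminus\omega')}$: one must argue that although $u_n-u$ is only known to converge locally, the weight $|x-y|^{-1-2s}$ decays fast enough away from $\omega'$ that the contribution of the far region is uniformly small and passes to the limit, while on the bounded intermediate annulus one leans on a dominated-convergence argument with dominating function coming from the uniform energy bound. Everything else is a direct citation of \cite[Section 2]{MSW} and of Theorem \ref{thmcomp}.
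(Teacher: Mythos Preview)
Your approach is essentially the paper's: linearity reduces to $(u_n-u)^{\rm e}$, the continuity estimate for the extension is \cite[Lemma~2.10]{MSW}, and the right-hand side tends to zero by Theorem~\ref{thmcomp} (the tail $\iint_{\omega'\times(\R\setminus\omega')}$ goes to zero by plain dominated convergence since the maps are $\mathcal{N}$-valued and $|x-y|^{-1-2s}\in L^1(\omega'\times(\R\setminus\omega'))$ when $2s<1$, so your Vitali worry in step~(iv) is unnecessary). The only organizational difference is that the paper applies \cite[Lemma~2.10]{MSW} on small boundary half-balls $B^+_r({\bf x}_0)$ with ${\bf x}_0\in\partial^0\Omega$ and covers the interior of $\Omega$ separately via standard elliptic regularity for \eqref{yharmeq}, rather than quoting a single global estimate of the form~\eqref{extcontesti} on all of~$\Omega$.
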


\begin{proof}
From Theorem \ref{thmcomp} and \cite[Lemma 2.10]{MSW}, we start deducing that $u_n^{\rm e}\to u^{\rm e}$ strongly in $L^2_{\rm loc}(\overline\R^2_+;\R^d, y^ad{\bf x})$. Since $u^{\rm e}$ solves 
\eqref{eqextint}, we infer from standard elliptic theory that $u_n^{\rm e}\to u^{\rm e}$ strongly in $H^1_{\rm loc}(\R^2_+;\R^d, y^ad{\bf x})$. It remains to prove that strong convergence holds up to $\partial^0\Omega$ (locally). To this purpose, let us fix an arbitrary half ball $B^+_r({\bf x_0})$ such that ${\bf x}_0\in \partial^0\Omega$ and  $\partial^0B^+_{3r}({\bf x_0})\subset \omega$. By \cite[Lemma 2.10]{MSW}, we have 
$${\bf E}_s\big(u_n^{\rm e}-u^{\rm e},B^+_r({\bf x_0})\big) \leq C\left(\mathcal{E}_s(u_n-u,\partial^0B_{2r}^+({\bf x}_0))+\|u_n-u\|_{L^2(\partial^0B_{2r}^+({\bf x}_0))}\right)\to 0\,,$$
again by Theorem  \ref{thmcomp}. 
\end{proof}

\begin{corollary}\label{coroldenssup}
In addition to Theorem \ref{thmcomp}, if $\{x_n\}\subset \omega$ is a sequence converging to $x\in\omega$, then 
$$\limsup_{n\to\infty}\mathbf{\Theta}_{u_n^{\rm e}}(x_n)\leq  \mathbf{\Theta}_{u^{\rm e}}(x)\,.$$
\end{corollary}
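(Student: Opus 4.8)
The plan is to combine the monotonicity formula (Corollary \ref{corodens}) with the strong $H^1$-convergence established in Theorem \ref{compext}. The key point is that $\mathbf{\Theta}_{u^{\rm e}_n}(x_n)$ is defined as a decreasing limit in the radius, so we can control it from above by evaluating $\mathbf{\Theta}_{u^{\rm e}_n}(x_n,\rho)$ at a \emph{fixed} small radius $\rho$, and then pass to the limit in $n$ using the convergence of the energy on fixed half-balls.

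More precisely, first I would fix $\rho>0$ small enough that $\overline{B^+_{2\rho}(x)}\subset\Omega$ for some bounded admissible $\Omega$ with $\overline{\partial^0\Omega}\subset\omega$ (possible since $x\in\omega$). For $n$ large, $x_n$ is close to $x$, so $B^+_\rho(x_n)\subset B^+_{2\rho}(x)$, hence by the monotonicity formula (the density is nondecreasing in the radius),
$$\mathbf{\Theta}_{u^{\rm e}_n}(x_n)\leq \mathbf{\Theta}_{u^{\rm e}_n}(x_n,\rho)=\frac{1}{\rho^{1-2s}}{\bf E}_s\big(u^{\rm e}_n,B^+_\rho(x_n)\big)\leq \frac{1}{\rho^{1-2s}}{\bf E}_s\big(u^{\rm e}_n,B^+_{2\rho}(x)\big)\,.$$
By Theorem \ref{compext}, $u^{\rm e}_n\to u^{\rm e}$ strongly in $H^1(\Omega;\R^d,y^ad{\bf x})$, so ${\bf E}_s(u^{\rm e}_n,B^+_{2\rho}(x))\to {\bf E}_s(u^{\rm e},B^+_{2\rho}(x))$. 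Taking $\limsup_{n\to\infty}$ therefore gives
$$\limsup_{n\to\infty}\mathbf{\Theta}_{u^{\rm e}_n}(x_n)\leq \frac{1}{\rho^{1-2s}}{\bf E}_s\big(u^{\rm e},B^+_{2\rho}(x)\big)=\mathbf{\Theta}_{u^{\rm e}}(x,2\rho)\,.$$
Finally, letting $\rho\downarrow 0$ and using Corollary \ref{corodens} (existence of the limit defining $\mathbf{\Theta}_{u^{\rm e}}(x)$), the right-hand side converges to $\mathbf{\Theta}_{u^{\rm e}}(x)$, which is the claim.

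The only mild subtlety — and the step I would be most careful about — is making sure the strong convergence of Theorem \ref{compext} genuinely delivers convergence of the \emph{weighted Dirichlet energy on the fixed half-ball} $B^+_{2\rho}(x)$ up to its flat boundary portion, not merely on interior compact subsets. This is exactly what Theorem \ref{compext} provides (strong $H^1(\Omega;\R^d,y^ad{\bf x})$-convergence on an admissible $\Omega$ containing $\overline{B^+_{2\rho}(x)}$), so there is no real obstacle; one just has to choose $\rho$ and $\Omega$ compatibly at the outset. Everything else is a routine application of monotonicity plus a diagonal-free double limit ($n\to\infty$ first, then $\rho\downarrow 0$).
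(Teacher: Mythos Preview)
Your strategy is correct and essentially matches the paper's, but there is an arithmetic slip in the last displayed line. By definition
\[
\mathbf{\Theta}_{u^{\rm e}}(x,2\rho)=\frac{1}{(2\rho)^{1-2s}}\,{\bf E}_s\big(u^{\rm e},B^+_{2\rho}(x)\big),
\]
so what you actually obtain is
\[
\limsup_{n\to\infty}\mathbf{\Theta}_{u^{\rm e}_n}(x_n)\leq \frac{1}{\rho^{1-2s}}\,{\bf E}_s\big(u^{\rm e},B^+_{2\rho}(x)\big)=2^{1-2s}\,\mathbf{\Theta}_{u^{\rm e}}(x,2\rho),
\]
and after sending $\rho\downarrow 0$ you only conclude $\limsup_n\mathbf{\Theta}_{u^{\rm e}_n}(x_n)\leq 2^{1-2s}\mathbf{\Theta}_{u^{\rm e}}(x)$. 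Since $s<1/2$ this constant is strictly larger than $1$, so the inequality is too weak whenever the limiting density is positive --- which is exactly the case of interest in the proof of Theorem~\ref{main1}.

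The remedy is to replace the crude inclusion $B^+_\rho(x_n)\subset B^+_{2\rho}(x)$ by the sharp one $B^+_\rho(x_n)\subset B^+_{\rho+r_n}(x)$ with $r_n:=|x_n-x|\to 0$. Then
\[
\mathbf{\Theta}_{u^{\rm e}_n}(x_n)\leq \mathbf{\Theta}_{u^{\rm e}_n}(x_n,\rho)\leq \frac{1}{\rho^{1-2s}}\,{\bf E}_s\big(u^{\rm e}_n,B^+_{\rho+r_n}(x)\big),
\]
and the strong convergence from Theorem~\ref{compext} in $H^1(B^+_{2\rho}(x);\R^d,y^ad{\bf x})$ combined with $r_n\to 0$ gives that the right-hand side tends to $\frac{1}{\rho^{1-2s}}{\bf E}_s(u^{\rm e},B^+_{\rho}(x))=\mathbf{\Theta}_{u^{\rm e}}(x,\rho)$ exactly. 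Letting $\rho\downarrow 0$ then yields the claim. This is precisely the argument in the paper.
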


\begin{proof}
Without loss of generality we may assume that $x=0$. For $r>0$ small enough we have $\overline{\partial^0 B^+_{2r}}\subset \omega$. Setting $r_n:=|x_n|$, we have $r_n<r$ for $n$ large enough. Then, we infer from Corollary \ref{corodens} that
$$\mathbf{\Theta}_{u_n^{\rm e}}(x_n)\leq  \mathbf{\Theta}_{u_n^{\rm e}}(x_n,r)\leq \frac{1}{r^{1-2s}}{\bf E}_s(u^{\rm e}_n,B^+_{r+r_n})\,.$$
By Theorem \ref{compext}, we have $u_n^{\rm e}\to u^{\rm e}$ strongly in $H^1(B^+_{2r};\R^d, y^a\,d{\bf x})$, and thus 
$$\limsup_{n\to\infty}\mathbf{\Theta}_{u_n^{\rm e}}(x_n)\leq \lim_{n\to\infty}\frac{1}{r^{1-2s}}{\bf E}_s(u^{\rm e}_n,B^+_{r+r_n})= \mathbf{\Theta}_{u^{\rm e}}(0,r) \,.$$
Letting now $r\downarrow0$ provides the desired conclusion. 
\end{proof}

\section{Proof of Theorems \ref{main1} \& \ref{main2} }\label{prfthms}  

This section is devoted to the proof of Theorem \ref{main1} and \ref{main2}. We consider for the entire section a bounded open interval $\omega\subset\R$, and 
$u\in \widehat H^s(\omega;\mathcal{N})$ a minimizing $s$-harmonic map in $\omega$. Both proofs rely on the analysis of {\sl tangent maps} of $u$ at a given point of $\omega$. 
To define them, we  fix a point $x_0\in \omega$, and for $\rho>0$ we consider the rescaled map
$$u_{x_0,\rho}(x):=u(x_0+\rho x)\,.$$  
Tangent maps of $u$ at $x_0$ are all possible weak limits of $u_{x_0,\rho}$ as $\rho\downarrow 0$, and this is is the purpose of the following proposition. 

\begin{proposition}[\bf Tangent maps]\label{tangmap}
Let $\rho_n\to 0$ be an arbitrary sequence. There is a (not relabeled) subsequence such that $u_{x_0,\rho_n}\to u_0$ strongly in $H^s_{\rm loc}(\R)$, where $u_0$ is a minimizing $s$-harmonic map in every bounded open interval of the form 
\begin{equation}\label{formtangmap}
u_0(x):=\begin{cases} 
a & \text{if $x>0$}\,,\\
b & \text{if $x<0$}\,,
\end{cases}
\end{equation}
for some $a,b\in\mathcal{N}$. In addition, $\mathbf{\Theta}_{u_0^{\rm e}}(0,r)=\mathbf{\Theta}_{u_0^{\rm e}}(0)=\mathbf{\Theta}_{u^{\rm e}}(x_0)$ for every $r>0$. 
\end{proposition}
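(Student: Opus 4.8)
The plan is to follow the classical blow-up scheme for minimizing harmonic maps, adapted to the nonlocal/degenerate setting via the Caffarelli-Silvestre extension and the monotonicity formula of Lemma~\ref{formmonot}.

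\medskip

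\textbf{Step 1: Compactness of the rescalings.} First I would observe that the rescaling $u_{x_0,\rho}$ is again a minimizing $s$-harmonic map in the rescaled interval, and that its extension satisfies $(u_{x_0,\rho})^{\rm e}=(u^{\rm e})(x_0+\rho\,\cdot\,)$ up to the obvious rescaling of the half-plane. By the monotonicity formula (Corollary~\ref{corodens}), $\mathbf{\Theta}_{u^{\rm e}}(x_0,\rho)$ is bounded as $\rho\downarrow 0$, which translates into a uniform bound $\mathbf{E}_s\big((u_{x_0,\rho_n})^{\rm e},B_R^+\big)\le C R^{1-2s}$ for every fixed $R$, hence a uniform bound on $\mathcal{E}_s(u_{x_0,\rho_n},\omega')$ for every bounded interval $\omega'$ (using Lemma~\ref{H1tofracesti} to pass from the extension back to the fractional energy). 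A diagonal argument over an exhaustion of $\R$ by intervals, combined with the compact embedding $H^s_{\rm loc}\hookrightarrow L^2_{\rm loc}$, gives a subsequence with $u_{x_0,\rho_n}\to u_0$ in $L^2_{\rm loc}(\R)$. Then Theorem~\ref{thmcomp} applies on each such interval: $u_0$ is a minimizing $s$-harmonic map in every bounded open interval, $u_{x_0,\rho_n}\to u_0$ strongly in $H^s_{\rm loc}(\R)$, and $\mathcal{E}_s(u_{x_0,\rho_n},\omega')\to\mathcal{E}_s(u_0,\omega')$.

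\medskip

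\textbf{Step 2: The density identity.} From the strong $H^s_{\rm loc}$ convergence and energy convergence, together with Theorem~\ref{compext} (strong $H^1$-convergence of the extensions on admissible sets), I would deduce that for every $r>0$,
$$\mathbf{\Theta}_{u_0^{\rm e}}(0,r)=\lim_{n\to\infty}\frac{1}{r^{1-2s}}\mathbf{E}_s\big((u_{x_0,\rho_n})^{\rm e},B_r^+\big)=\lim_{n\to\infty}\mathbf{\Theta}_{u^{\rm e}}(x_0,\rho_n r)=\mathbf{\Theta}_{u^{\rm e}}(x_0),$$
where the last equality is the existence of the monotone limit from Corollary~\ref{corodens}. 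Since this holds for every $r>0$, the function $r\mapsto\mathbf{\Theta}_{u_0^{\rm e}}(0,r)$ is constant, hence also equal to $\mathbf{\Theta}_{u_0^{\rm e}}(0)$.

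\medskip

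\textbf{Step 3: Homogeneity forces a $0$-homogeneous (i.e.\ locally constant) map.} This is the heart of the argument and the main obstacle. The constancy of $r\mapsto\mathbf{\Theta}_{u_0^{\rm e}}(0,r)$ combined with the monotonicity identity \eqref{sam2223} (applied to $u_0^{\rm e}$, which is legitimate since $u_0$ is minimizing on every interval, hence $u_0^{\rm e}$ is a minimizing weighted harmonic map with free boundary on every admissible set) forces
$$\int_{\R^2_+}y^a\frac{|(\mathbf{x}-0)\cdot\nabla u_0^{\rm e}|^2}{|\mathbf{x}|^{3-2s}}\,d\mathbf{x}=0,$$
so the radial derivative $\mathbf{x}\cdot\nabla u_0^{\rm e}$ vanishes identically on $\R^2_+$; that is, $u_0^{\rm e}$ is $0$-homogeneous. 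A $0$-homogeneous function on $\R^2_+$ that is continuous up to $\partial^0$ away from the origin (by the interior elliptic regularity for \eqref{yharmeq} and Corollary~\ref{cordecay2} at the boundary) is determined by its restriction to the half-circle, and its trace $u_0$ on $\R\setminus\{0\}$ is $0$-homogeneous, i.e.\ $u_0\equiv a$ on $(0,\infty)$ and $u_0\equiv b$ on $(-\infty,0)$ for some $a,b\in\R^d$; since $u_0$ takes values in $\mathcal{N}$ a.e., $a,b\in\mathcal{N}$. This gives the form \eqref{formtangmap}. (Alternatively, and perhaps more cleanly, one invokes the $\varepsilon$-regularity of Section~\ref{epsregsec}: a nontrivial tangent map would still have positive density at the origin, but one can test minimality against the $0$-homogeneous competitor agreeing with $u_0$ on $\partial^+B_r$ — which on the one-dimensional boundary is simply the two-valued function — and a one-dimensional Poincar\'e/Caccioppoli computation shows the $0$-homogeneous extension has strictly less energy unless $u_0$ was already $0$-homogeneous; I expect the homogeneity-from-monotonicity route to be the shortest.)

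\medskip

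The delicate point throughout is justifying that the monotonicity identity and the energy-convergence statements transfer to $u_0^{\rm e}$ and that one may legitimately apply them on an unbounded exhaustion — this is handled by working on each fixed admissible set $B_R^+$ and letting $R\to\infty$ at the end, using that all constants in Lemma~\ref{H1tofracesti}, Theorem~\ref{thmcomp}, and Theorem~\ref{compext} are independent of the competitor and scale correctly.
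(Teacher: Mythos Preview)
Your proof is correct and follows essentially the same route as the paper: monotonicity gives uniform local energy bounds on the rescalings, Theorem~\ref{thmcomp} and Theorem~\ref{compext} yield strong convergence and minimality of the limit, the density identity follows from strong convergence of the extensions, and the monotonicity formula \eqref{sam2223} forces $0$-homogeneity of $u_0^{\rm e}$ and hence the step-function form~\eqref{formtangmap}. One small point: Lemma~\ref{H1tofracesti} only controls the Gagliardo seminorm on $\omega'\times\omega'$, not the full $\mathcal{E}_s(u_n,\omega')$ needed to invoke Theorem~\ref{thmcomp}; the paper handles the cross term $\omega'\times(\R\setminus\omega')$ by the trivial bound $|u_n(x)-u_n(y)|\le C$ coming from $u_n(\R)\subset\mathcal{N}$, and you should make this explicit.
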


\begin{proof}
Assume  without loss of generality that $x_0=0$ and $[-1,1]\subset\omega$.  For an integer $k\geq 1$, write $\omega_k:=(-k,k)$. For $n$ large enough,  $2\rho_nk\leq 1$ and $u_n:=u_{0,\rho_n}\in \widehat H^s(\omega_k;\mathcal{N})$.   Moreover, 
$$\mathcal{E}_s(u_n,\omega_k)=\frac{1}{\rho_n^{1-2s}} \mathcal{E}_s(u,\rho_n\omega_k)\,.$$ 
Next we infer from Lemma~\ref{H1tofracesti} and Lemma \ref{formmonot} that 
$$\frac{1}{\rho_n^{1-2s}}\iint_{(\rho_n\omega_k)\times(\rho_n\omega_k)}\frac{|u(x)-u(y)|^2}{|x-y|^{1+2s}}\,dxdy\leq C\mathbf{\Theta}_{u^{\rm e}}(0,2\rho_nk) 
\leq  C\mathbf{\Theta}_{u^{\rm e}}(0,1)\,.$$
On the other hand, 
\begin{multline*}
\frac{1}{\rho_n^{1-2s}}\iint_{(\rho_n\omega_k)\times(\R\setminus\rho_n\omega_k)}\frac{|u(x)-u(y)|^2}{|x-y|^{1+2s}}\,dxdy\\
\leq \frac{C}{\rho_n^{1-2s}}\iint_{(\rho_n\omega_k)\times(\R\setminus\rho_n\omega_k)}\frac{1}{|x-y|^{1+2s}}\,dxdy=Ck^{1-2s}\,. 
\end{multline*}
Therefore $\mathcal{E}_s(u_n,\omega_k)\leq C_k$ for a constant $C_k$ depending only on $s$ and $k$.  In particular, $\{u_n\}$ is bounded in $H^s(\omega_k)$ for each integer $k\geq 1$. Hence, we can find a (not relabeled) subsequence such that $u_k\rightharpoonup u_0$ weakly in $H^s_{\rm loc}(\R)$. From the compact embedding $H^s(\omega_k)\hookrightarrow L^2(\omega_k)$, we also deduce that 
$u_n\to u_0$ in $L^2_{\rm loc}(\R)$.  Applying Theorem \ref{thmcomp} in each $\omega_k$, we derive that $u_0$ is a minimizing $s$-harmonic map in every bounded open interval. 
Next, Theorem \ref{compext}  implies that 
$$\mathbf{\Theta}_{u_0^{\rm e}}(0,r)=\lim_{n\to\infty}\mathbf{\Theta}_{u_n^{\rm e}}(0,r)=\lim_{n\to\infty}\mathbf{\Theta}_{u^{\rm e}}(0,\rho_nr)=\mathbf{\Theta}_{u^{\rm e}}(0)\quad\forall r>0\,. $$
Here, we have also used that $u^{\rm e}_n({\bf x})=u^{\rm e}(\rho_n{\bf x})$. In view of Corollary~\ref{corodens}, we thus have 
$$\int_{B^+_r}y^a\frac{|{\bf x}\cdot\nabla u_0^{\rm e}|^2}{|{\bf x}|^{3-2s}}\,d{\bf x}= \mathbf{\Theta}_{u_0^{\rm e}}(0,r)-\mathbf{\Theta}_{u_0^{\rm e}}(0)=0\quad\forall r>0\,. $$
Therefore ${\bf x}\cdot\nabla u^{\rm e}=0$, so that $u_0^{\rm e}$ is positively $0$-homogeneous, i.e., $u_0^{\rm e}(\lambda {\bf x})=u_0^{\rm e}({\bf x})$ for every ${\bf x}\in \R^2_+$ and $\lambda>0$. In particular, $u_0$ is positively $0$-homogeneous, and \eqref{formtangmap} follows. 
\end{proof}

\begin{remark}\label{remdenscont}
If $u$ is continuous at $x_0$, the limit $u_0$ obtained in Proposition \ref{tangmap} is obviously the constant map equal to $u(x_0)$. As a consequence, if $u$ is continuous at~$x_0$, then $\mathbf{\Theta}_{u^{\rm e}}\big((x_0,0)\big)=0$.   
\end{remark}

\begin{proof}[Proof of Theorem \ref{main1}] Let us consider the set
\begin{equation}\label{defsingset}
S:=\big\{x\in\omega: \mathbf{\Theta}_{u^{\rm e}}\big((x,0)\big)\geq 2^{2s-1}\varepsilon^2_2\big\}\,,
\end{equation}
where $\varepsilon_2>0$ is the constant given by Theorem~\ref{thmdecay}. Since $\mathbf{\Theta}_{u^{\rm e}}$ is upper semicontinuous, $S$ is a relatively closed subset of $\omega$.  Moreover, Corollaries \ref{cordecay} \& \ref{cordecay2} together with Corollary~\ref{corodens} implies that $u$ is locally H\"older continuous in $\omega\setminus S$. To prove Theorem\ref{main1}, it then remains to show that $S$ has no accumulation point in $\omega$.
We argue by contradiction assuming that there is a sequence $\{x_n\}\subset S$ such that $x_n\to x\in\omega$. Without loss of generality, we may assume that $x_n>x$. Setting $\rho_n:=x_n-x$, we consider the sequence $u_n:=u_{x_0,\rho_n}$, and then apply Proposition \ref{tangmap} to find a (not relabeled) subsequence and a minimizing $s$-harmonic map $u_0$ of the form   \eqref{formtangmap}  such that $u_n\to u_0$.   In view of Corollary \ref{coroldenssup} we have 
$$\mathbf{\Theta}_{u_0^{\rm e}}\big((1,0)\big)\geq \limsup_{n\to\infty} \mathbf{\Theta}_{u_n^{\rm e}}\big((1,0)\big)= \limsup_{n\to\infty} \mathbf{\Theta}_{u^{\rm e}}\big((x_n,0)\big)\geq \varepsilon_2\,. $$
On the other hand, by the explicit form \eqref{formtangmap}, the map $u_0$ is continuous at~$1$. Hence, $\mathbf{\Theta}_{u_0^{\rm e}}\big((1,0)\big)=0$ by Remark \ref{remdenscont}, contradiction.  
\end{proof}

\begin{proof}[Proof of Theorem \ref{main2}]
Recall that we assume now that $\mathcal{N}=\mathbb{S}^{d-1}$. In view of the proof of Theorem \ref{main1}, it is enough to show that the set $S$ defined in \eqref{defsingset} is empty. Assume by contradiction that $S\not=\emptyset$. We may then assume without loss of generality that $0\in S$. Let $u_0$ be a $s$-minimizing harmonic map produced by Proposition \ref{tangmap}, i.e., $u_0$ is the limit of the rescaled map $u_{0,\rho_n}$ for some sequence $\rho_n\to 0$. Then $\mathbf{\Theta}_{u_0^{\rm e}}(0)\geq \varepsilon_2>0$, so that $u_0$ is not constant. In other words, in the form \eqref{formtangmap} the two vectors $a,b\in\mathbb{S}^{d-1}$ are distinct. Upon working in the plane passing through $a$, $b$, and the origin, there is no loss of generality assuming that $d=2$, that  is $\mathcal{N}=\mathbb{S}^1$. Moreover, rotating coordinates in the image if necessary, we can assume that   
$$ a=(\alpha,\beta) \text{ and } b=(-\alpha,\beta)\,,$$
with $0<\alpha\leq 1$ and $0\leq \beta<1$ satisfying $\alpha^2+\beta^2=1$. 
Then set 
$$a^*:=(-\beta,\alpha)\text{ and } b^*:=(\beta,\alpha) \,.$$
Note that $a^*\perp a$ and $b^*\perp b$.  
We define for $t\in\R$, 
$$u_t(x):=\begin{cases} 
\displaystyle \frac{a+ta^*}{\sqrt{1+t^2}} & \text{if $0<x<1$}\,,\\[10pt]
\displaystyle \frac{b+tb^*}{\sqrt{1+t^2}}  & \text{if $-1<x<0$}\,,\\[10pt]
u_0(x) & \text{otherwise}\,.
\end{cases}$$
One can easily check that $u_t\in \widehat H^s\big((-2,2);\mathbb{S}^1\big)$, and since ${\rm spt}(u_t-u)\subset (-2,2)$, the map $u_t$ is an admissible competitor for the minimality of $u_0$ in $(-2,2)$. In other words, $\mathcal{E}_s\big(u_0,(-2,2)\big)\leq \mathcal{E}_s\big(u_t,(-2,2)\big)$, which in turn yields $\mathcal{E}_s\big(u_0,(-1,1)\big)\leq \mathcal{E}_s\big(u_t,(-1,1)\big)$ since $u_t=u_0$ outside $(-1,1)$. Therefore, 
$$\left[\frac{d}{dt}\mathcal{E}_s\big(u_t,(-1,1)\big)\right]_{t=0}=0 \quad\text{and}\quad\left[\frac{d^2}{dt^2}\mathcal{E}_s\big(u_t,(-1,1)\big)\right]_{t=0}\geq 0\,.$$
Now we expand  $\mathcal{E}_s\big(u_t,(-1,1)\big)$ as 
\begin{align*}
\mathcal{E}_s\big(u_t,(-1,1)\big) =&\left(\gamma_s\int_{0}^1\int_{-1}^0\frac{dxdy}{|x-y|^{1+2s}}\right)\frac{\big|(a-b)+t(a^*-b^*)\big|^2}{1+t^2}\\
&+\left( \gamma_s\int_0^1\int_1^{+\infty}\frac{dxdy}{|x-y|^{1+2s}} \right)\frac{\big|(1-\sqrt{1+t^2})a+ta^*\big|^2}{1+t^2}\\
&+\left(\gamma_s\int_{0}^1\int_{-\infty}^{-1}\frac{dxdy}{|x-y|^{1+2s}}\right)\frac{\big|a+ta^*-b\sqrt{1+t^2}\big|^2}{1+t^2}\ \\
&+\left(\gamma_s\int_{-1}^0\int_{-\infty}^{-1}\frac{dxdy}{|x-y|^{1+2s}}\right)\frac{\big|(1-\sqrt{1+t^2})b+tb^*\big|^2}{1+t^2} \\
&+\left(\gamma_s\int_{-1}^0\int_1^{+\infty}\frac{dxdy}{|x-y|^{1+2s}}\right)\frac{\big|b+tb^*-a\sqrt{1+t^2}\big|^2}{1+t^2} \,.
\end{align*}
It then follows that 
$$\left[\frac{d}{dt}\mathcal{E}_s\big(u_t,(-1,1)\big)\right]_{t=0}= -C\alpha \beta$$
for some constant $C=C(s)>0$. The first order condition implies $\alpha\beta=0$, and thus 
$$a=(1,0)\,,\; b=(-1,0)\,,\text{ and } a^*=b^*=(0,1)\,. $$
As a consequence, using the symmetry in the integrals above,   
\begin{align*}
\mathcal{E}_s\big(u_t,(-1,1)\big) =&\left(4\gamma_s\int_{0}^1\int_{-1}^0\frac{dxdy}{|x-y|^{1+2s}}\right)\frac{1}{1+t^2}\\
&+\left( 2\gamma_s\int_0^1\int_1^{+\infty}\frac{dxdy}{|x-y|^{1+2s}} \right)\frac{(1-\sqrt{1+t^2})^2+t^2}{1+t^2}\\
&+\left(2\gamma_s\int_{0}^1\int_{-\infty}^{-1}\frac{dxdy}{|x-y|^{1+2s}}\right)\frac{(1+\sqrt{1+t^2})^2+t^2}{1+t^2} \,.
\end{align*}
An elementary computation now yields 
\begin{align*}
\left[\frac{d^2}{dt^2}\mathcal{E}_s\big(u_t,(-1,1)\big)\right]_{t=0} &=-8\gamma_s\int_{0}^1\int_{-1}^0\frac{dxdy}{|x-y|^{1+2s}} +4\gamma_s\int_0^1\int_1^{+\infty}\frac{dxdy}{|x-y|^{1+2s}} \\
&\hskip120pt-4\gamma_s\int_{0}^1\int_{-\infty}^{-1}\frac{dxdy}{|x-y|^{1+2s}}\\
&=-4\gamma_s\int_{0}^1\int_{-1}^0\frac{dxdy}{|x-y|^{1+2s}} +4\gamma_s\int_0^1\int_1^{+\infty}\frac{dxdy}{|x-y|^{1+2s}}\\ 
&\hskip120pt-4\gamma_s\int_{0}^1\int_{-\infty}^{0}\frac{dxdy}{|x-y|^{1+2s}}\\
&= -4\gamma_s\int_{0}^1\int_{-1}^0\frac{dxdy}{|x-y|^{1+2s}}\\
& <0\,,
\end{align*}
contradicting the second order condition for minimality. 
\end{proof}

\vskip10pt
\subsubsection*{Aknowledgements}
V.M. is supported by the {\sl Agence Nationale de la Recherche} through the projects  ANR-12-BS01-0014-01 (Geometrya) and  ANR-14-CE25-0009-01 (MAToS). 

\vskip10pt


\end{document}